\newcommand{\zz}{{\Bbb Z}}
\newcommand{\pp}{{\Bbb P}}
\newcommand{\aaa}{{\Bbb A}}
\newcommand{\iis}{{\mathbf{i}}}
\newcommand{\ddim}{\operatorname{dim}}
\newcommand{\kker}{\operatorname{Ker}}
\newcommand{\coker}{\operatorname{Coker}}
\newcommand{\spec}{\operatorname{Spec}}
\newcommand{\Hom}{\operatorname{Hom}}
\newcommand{\op}[1]{\operatorname{#1}}
\newcommand{\ffi}{\varphi}
\newcommand{\eps}{\varepsilon}
\newcommand{\la}{\langle}
\newcommand{\ra}{\rangle}
\newcommand{\row}{\rightarrow}
\newcommand{\llow}{\longleftarrow}
\newcommand{\low}{\leftarrow}
\newcommand{\lrow}{\longrightarrow}
\renewcommand{\leq}{\leqslant}
\renewcommand{\geq}{\geqslant}
\newcommand{\nichego}[1]{}
\newcommand{\ov}[1]{\overline{#1}}
\newcommand{\un}[1]{\underline{#1}}
\newcommand{\wt}[1]{\widetilde{#1}}
\newcommand{\smk}{{\mathbf{Sm}}_k}
\newcommand{\proj}{{\mathbf{{\Bbb{P}}roj}}}
\newcommand{\frc}{\frak{c}}
\newcommand{\Dc}{c}
\newcommand{\Gd}[1]{G\la{#1}\ra}
\newcommand{\Hd}[1]{H\la{#1}\ra}
\newcommand{\hHd}[1]{\hat{(H\la{#1}\ra)}}
\newcommand{\Hanyd}[2]{{#1}\la{#2}\ra}
\newcommand{\laz}{{\Bbb L}}
\newcommand{\co}{{\cal O}}
\newcommand{\cm}{{\cal M}}
\newcommand{\cv}{{\cal V}}
\newcommand{\cw}{{\cal W}}
\newcommand{\rc}{{\cal RC}}
\newcommand{\De}[1]{\eth #1}
\newcommand{\Dep}[2]{\eth_{#1}#2}
\newcommand{\Den}[2]{\eth^{#1}#2}
\newcommand{\de}[1]{\partial #1}
\newcommand{\dep}[2]{\partial_{#1}#2}
\newcommand{\den}[2]{\partial^{#1}#2}
\newcommand{\depn}[3]{\partial_{#1}^{#2}{#3}}
\newcommand{\Depn}[3]{\eth_{#1}^{#2}{#3}}
\newcommand{\wtO}[1]{\accentset{\frown}{#1}}
\newcommand{\wtD}[1]{\accentset{\smile}{#1}}
\newcommand{\wtOO}[1]{\accentset{\frown}{\accentset{\frown}{#1}}}
\newcommand{\wtA}[1]{\accentset{\backsim\!\backsim\!\backsim\!\backsim\!\backsim}{#1}}
\newcommand{\wtdi}[1]{\accentset{\diamond}{#1}}
\newcommand{\sver}[2]{{#1}^{#2}}
\newcommand{\res}[2]{{#1}_{#2}}
\newcommand{\Qed}{\hfill$\square$\smallskip}
\newenvironment{proof}{\noindent{\it Proof}:}{\vskip 5mm}
\newtheorem{proposition}{Proposition}[section]{\bf}{\it}
\newtheorem{theorem}[proposition]{Theorem}{\bf}{\it}
\newtheorem{lemma}[proposition]{Lemma}{\bf}{\it}
{\bf}{\it}
\newtheorem{definition}[proposition]{Definition}{\bf}{\rm}
{\bf}{\it}
{\bf}{\it}
\newtheorem{example}[proposition]{Example}{\bf}{\it}
\newtheorem{remark}[proposition]{Remark}{\bf}{\rm}
\begin{document}

\title{Operations and poly-operations in Algebraic Cobordism}
\author{Alexander Vishik}
\date{}

\maketitle

\begin{abstract}
In the case of a field of characteristic zero, we describe all operations (including non-additive ones) from a theory $A^*$
obtained from Algebraic Cobordism $\Omega^*$
of M.Levine-F.Morel by change of coefficients to any oriented cohomology theory $B^*$ (in the sense of Definition \ref{goct}).
We prove that such an operation can be reconstructed out of it's action on the
products of projective spaces. This reduces the construction of operations to algebra
and extends the additive case done in \cite{SU}, as well as the topological one
obtained by T.Kashiwabara - see \cite{Kash}.
The key new ingredients which permit us to treat the non-additive operations
are: the use of poly-operations and the "Discrete Taylor expansion".
As an application we construct the only missing, the $0$-th (non-additive) Symmetric operation,
for arbitrary $p$ - see \cite{SOpSt}, which permits to sharpen results on the structure
of Algebraic Cobordism - see \cite{ACMLR}. We also prove the general Riemann-Roch theorem
for arbitrary (even non-additive) operations (over an arbitrary field). This extends the case of multiplicative operations proved by I.Panin in \cite{P-RR}.
\end{abstract}

\tableofcontents

\section{Introduction}
\label{Intro}
In Topology, the notion of a generalized cohomology theory
was introduced and applied with great success to provide
invariants for topological spaces. This permitted to answer
various old questions and to enhance the topological world
with a lot of structure.

In algebraic geometry the respective development was lagging behind.
Although such algebro-geometric cohomology theory,
as {\it algebraic K-theory}, preceded it's topological counterpart,
for a long time, it was one of the few theories available in the algebraic context.
Another notable exception was the {\it Chow groups}.

The situation changed dramatically with the works of V.Voevodsky
in the 1990's who brought effective topological methods into
algebraic geometry and introduced the {\it motivic category} - \cite{VoMot} which provides the
natural environment for {\it motivic cohomology} - an algebro-geometric version of singular cohomology
(earlier constructed by S.Bloch in the form of {\it higher Chow groups} - \cite{Bl}),
and together with F.Morel defined the {\it $\aaa^1$-homotopic category} - \cite{MV} which permitted
to treat algebraic variety with the same flexibility as topological spaces.
This provided the necessary tools for the construction of the generalized cohomology theories,
and such theories, as well as cohomological operations on them, played a crucial role in the
proof of Milnor's and Bloch-Kato conjectures by V.Voevodsky and M.Rost-V.Voevodsky.

The algebro-geometric homotopic world is more complex than the topological one. This is manifested
by the presence of two natural independent "suspensions" $(1)$ and $[1]$ which makes algebro-geometric
homology groups numbered by two numbers. The groups related to the direction $(1)[2]$ behave generally
better and have substantially simpler geometric interpretation. This is the, so-called,
{\it pure part of the theory}.
In the case of motivic cohomology $\op{H}_{\cm}^{*,*'}$, these are classical
Chow groups $\op{CH}^*$. At the same time, such a "pure part" is sufficient for many purposes,
so it would be useful to have tools which would permit to work with the "pure part" alone.
One of the main quests here was to find an "elementary" construction of the pure part of the
universal theory - the $\op{MGL}^{*,*'}$ of V.Voevodsky (an algebro-geometric analogue of the
complex-oriented cobordism $MU^*$ in topology). This problem was solved by M.Levine and F.Morel
who constructed $\Omega^*$ - the {\it algebraic cobordism of Levine-Morel} \cite{LM}
(see also \cite{Lcomp} and \cite{Ho}) .

Two groups of authors independently invented the notion of an oriented cohomology theory for smooth algebraic varieties 
(the conference at Oberneys, year 2000). The axioms were slightly different. I.Panin and A.Smirnov in \cite{PS} 
(see also \cite[Definition 1.1.7]{P-RR}) employed the localisation axiom, which appeared to be a very efficient tool,
in particular, in the proof of the Riemann-Roch theorem - \cite[Theorems 2.5.3, 2.5.4]{P-RR}. While the main axioms of M.Levine and F.Morel \cite[Definition 1.1.2]{LM} didn't include localization. The version of an oriented cohomology theory we are using is some breed of the two. Namely, it is obtained by substituting the localization axiom of Panin-Smirnov by the strong version of it, where one requires the surjectivity of an open restriction homomorphism (such an axiom appears in \cite{LM} as an optional extra). This, on the one hand, restricts us to {\it pure parts} of theories,
but on the other hand, provides the right tool to study such {\it small theories} which permits to prove something interesting about them.

The theory $\Omega^*$ is very rich, and the classical theories of Chow groups and $K_0$ can be both
obtained from it by a simple change of coefficients, and so are small "faces" of this theory.
Thus, we get a much "larger" invariant of algebraic
varieties. But to work with such an invariant one needs some structure on it. The structure is
provided by cohomological operations. The most important among them - the {\it stable} operations
of Landweber-Novikov were constructed in \cite{LM} (using \cite{PS}, see also \cite{P},\cite{Sm1}).
But it was observed (see \cite{GPQCG}) that to treat the torsion effects one needs more subtle
{\it unstable} operations. No general methods of constructing such operations in algebro-geometric
context were available up to recently. The solution was found in \cite{SU}, where the notion of a
{\it theory of rational type} was introduced. For such a theory, $A^*(X)$ permits a description
inductive on the dimension of $X$, and these appear to be exactly the theories obtained from algebraic
cobordism of Levine-Morel $\Omega^*$ by change of coefficients (in the case of a field of characteristic zero). In \cite{SU} the {\it additive}
cohomological operations from a theory of rational type elsewhere were classified. It was shown that
such an operation is completely determined and can be reconstructed from its action on products
of projective spaces. This provides an effective tool in constructing operations, since everything
is reduced to defining a set of power series satisfying certain conditions (that is, to "algebra").
At the same time, the methods of \cite{SU} permitted to treat the additive case only, as the proof
used many formulas involving sums.

In the current paper we extend the methods of \cite{SU} to the case of arbitrary (non-additive)
operations. The new ingredients which permitted this are: the {\it Discrete Taylor Expansion} -
the method of describing non-additive maps between additive objects, and the use of
{\it poly-operations}. As in the additive case of \cite{SU} we prove that operations
from a theory of rational type elsewhere are in
$1$-to-$1$ correspondence with transformations on the category $\proj$ whose objects are
$(\pp^{\infty})^{\times l}$, for all $l$, and morphisms are generated by:
the action of the symmetric group ${\frak{S}}_l$, the partial projections, the
partial diagonals, the partial point embeddings, and the partial Segre embeddings (the only natural
maps you can write) - see Theorem \ref{MAIN}.
The topological variant of this result was obtained by T.Kashiwabara in \cite[Theorem 4.2]{Kash}.
We actually prove a more general poly-operational case of this statement (Theorem \ref{MAINpoly}).
The use of poly-operations is really essential, as we extend our operation from $\proj$ to
$(\smk)_{\leq d}\times\proj$ by induction on the dimension $d$ of varieties, and the
induction step goes only for all poly-operations (of arbitrary foldness!) simultaneously.

The main result gives the classification of arbitrary (non-additive) operations from a free theory in the sense of
Levine-Morel elsewhere in terms of purely algebraic data - see Theorem \ref{alg-MAIN}.
In particular, we get such a classification in the case of operations on Algebraic Cobordism.
In the case of $K_0$, it follows from our Theorem \ref{MAIN} that the ring of all operations from $\overline{K}_0$ (vector
bundles of virtual dimension zero) to an oriented cohomology theory $A^*$ is the power series ring $A[[c^A_1,c^A_2,...]]$ over the
coefficient ring of $A^*$ with generators - the Chern classes (which are non-additive operations $K_0\row A^*$) - see
\cite[Theorem 2.1]{Se17a}. In particular, this shows
that "orientability" of a theory can be expressed as the existence of a nice "coordinate system" on the set of all such operations.
It also shows that our classical notion of "orientability" (that is, the existence of a {\it push-forward} structure for proper morphisms)
is actually "orientability with respect to $K_0$", and raises the question,
if there are orientabilities with respect to other theories? The first non-trivial example here was constructed by P.Sechin, who
in \cite{Se17a} showed using our Theorem \ref{MAIN} that Chow groups are "orientable" with respect to any higher Morava K-theory $K(n)$.
In other words, that there are "Morava Chern classes" which generate the ring of the respective (non-additive) operations
$\overline{K}(n)\row CH^*$. And, moreover, there are similar classes $K(n)\row K(n)$, that is, $K(n)$ is "orientable" with respect to
itself - \cite{Se18}.
These classes were then used to construct the Morava-$\gamma$-filtration and to approach Chow group elements which are inaccessible for
classical Chern classes. The Theorem \ref{MAIN} and the methods of the current article is the main driving force behind
these results and ideas.

Theorem \ref{MAIN} also permits us to construct the $0$-th non-additive {\it Symmetric operation} for arbitrary $p$
(for $p=2$ such an operation was constructed in \cite{so2} by an explicit geometric construction) -
see \cite{SOpSt}. This completes the construction of a {\it Total Symmetric operation} and permits
to sharpen some results on the structure of algebraic cobordism. Namely, we show - see \cite{ACMLR}
that $\Omega^*(X)$ as a module over the Lazard ring $\laz$ has relations in positive codimension.
This extends the result of M.Levine and F.Morel claiming that this module has generators in non-negative
codimension - see \cite{LM}, and also computes the algebraic cobordism ring of a curve.
This line of results was extended further by P.Sechin, who in \cite{Se17b} proved the {\it Syzygies conjecture} of the author
claiming that the Algebraic Cobordism of Levine-Morel of a smooth variety $X$ has a free $\laz$-resolution whose $j$-th term
has generators in codimensions $\geq j$. He also proved other strong structural results on $\Omega^*$ and computed the algebraic
cobordism ring of a surface.
The main tool there as in \cite{ACMLR} are {\it Symmetric operations} of \cite{SOpSt} (including the non-additive one),
and so, the results of the current article are again instrumental.

We also prove the general Riemann-Roch Theorem for arbitrary (not necessarily additive) operations -
see Theorem \ref{RR}.
Such result was classically known only for multiplicative operations
- proven by I.Panin in \cite[Theorem 2.5.3]{P-RR} (announced in \cite{PS}, see also \cite{Sm2}).
It was extended to additive ones in \cite{SU}.
Our Theorem is a version of so-called {\it Riemann-Roch Theorems without denominators}. These describe the behavior of operations with respect to regular embeddings (in contrast to the usual Riemann-Roch theorem which is applicable to arbitrary projective maps, but works only for multiplicative operations with invertible Todd-genus - see \cite[Theorem 2.5.4]{P-RR}). This version of Riemann-Roch was first proposed by A.Grothendieck for Chern classes
from $K_0$ to Chow groups (these are non-additive operations) and proved by J.P.Jouanolou in \cite{Jou}. Another case, related to Adams operations can be traced back to Yu.I.Manin - see \cite[Theorem 6.16]{Ma}.

The text is organized as follows. In Section \ref{sec2} we recall the general definitions related to oriented cohomology theories and
introduce the {\it theories of rational type} admitting a description inductive on the dimension of a variety which, in the end, permits
to describe operations from such theories. In Section \ref{DiscrTaylor} we introduce our main "non-additive" tool - the
Discrete Taylor Expansion. This elementary "discrete calculus" permits to work efficiently with non-additive maps between abelian groups.
In Section \ref{O+PO} we discuss operations between theories, as well as (internal) and (external) poly-operations between them.
When all the above preparations are done, in Section \ref{sect-MAIN} we state and prove the Main Theorem \ref{MAIN}.
Finally, in Section \ref{naSO} we mention some of the applications of the main result.\\

\noindent{\bf Acknowledgements:}
I would like to thank the Referees for very useful suggestions and remarks which helped to improve the article.

\section{Theories of rational type}
\label{sec2}

Everywhere below $k$ will be a field of characteristic zero, unless specified otherwise. We will follow the notations of \cite{SU} (which mostly agree with that
of \cite{LM}). In particular, $\smk$ will denote the category of smooth quasi-projective varieties over $k$.

In this article, we work with the so-called {\it small theories}. This is a variant of an {\it oriented cohomology theory} which shares some features of such a theory in the sense of Panin-Smirnov (\cite{PS},\cite{P},\cite{P-RR},\cite{Sm1})
and the one in the sense of Levine-Morel (\cite{LM1},\cite{LM}).
The imposition of the strong form of the localization axiom narrows our choice of theories. Typical examples here are {\it pure parts}
of {\it large theories}, such as: Chow groups (as opposed to motivic cohomology), algebraic cobordism of Levine-Morel (as opposed to the $MGL$ of Voevodsky), $K_0$ (as opposed to the whole $K$-theory).
In contrast, the original definitions of Panin-Smirnov and Levine-Morel cover a much larger class of theories. But this strong form of localization permits to develop certain techniques
and prove results which are not available for {\it large theories}. 

\begin{definition}
\label{goct}
Under the term "oriented cohomology theory" we will understand any "small theory",
i.e. any theory on $\smk$ satisfying the axioms of \cite[Definition 2.1]{SU} which are the
standard axioms of \cite[Definition 1.1.2]{LM} plus the localization axiom:
\begin{itemize}
\item[$(LOC)$] For a smooth quasi-projective variety $X$ with closed subscheme
$Z\stackrel{i}{\row} X$
and open complement $U\stackrel{j}{\row}X$, one has an exact sequence:
$$
A_*(Z)\stackrel{i_*}{\lrow}A_*(X)\stackrel{j^*}{\lrow}A_*(U)\row 0,
$$
where $A_*(Z):=\op{lim}_{V\row Z}A_*(V)$ - the limit taken over all projective
maps from smooth varieties to $Z$, and for a $d$-dimensional equi-dimensional variety $T$, $A_*(T):=A^{d-*}(T)$.
\end{itemize}
\end{definition}

\begin{remark}
 Skipping in the axiom $(LOC)$ the requirement on the surjectivity of the pull-back $j^*$, we get exactly \cite[Definition 1.1.7]{P-RR}. So, alternatively, Definition \ref{goct} can be given in the following form: an ``oriented cohomology theory'' on $\smk$ is an oriented cohomology theory in the sense of \cite[Definition 1.1.7]{P-RR} with the additional requirement on the surjectivity of the pull-back map $j^*$ as in Definition \ref{goct}.
\end{remark}

We will be mostly interested in, so-called, "constant" theories.
(cf. \cite[Definition 4.4.1]{LM}):
\begin{itemize}
\item[$(CONST)$] {\it The theory is called "constant" if the natural map $A^*(k)\row A^*(L)$ is
an isomorphism, for each finitely generated field extension $L/k$,
}
\end{itemize}
where, following M.Levine and F.Morel (\cite[Subsection 4.4.1]{LM}),
we define $A^*(L)$ as $colim_{U\subset X}A^*(U)$ where $U$ runs over all
open non-empty subsets of some smooth model $X$ with $k(X)=L$ (recall, that we are in characteristic zero, so
all field extensions are separable).

For a {\it constant theory} we have a natural splitting:
$$
A^*=A\oplus\ov{A}^*
$$
into a constant part and elements supported in positive codimension.

\subsection{The short bi-complex ${\frc}$.}
\label{c}

The main result of this article is valid only for a special kind of theories, the so-called, ``theories of rational type''.
The definition of a theory of rational type is given in 
\cite[Definition 4.1]{SU}. It appears that these are exactly the theories of the form $A^*=\Omega^*\otimes_{{\Bbb{L}}}A$ obtained from Algebraic Cobordism of Levine-Morel by change of coefficients - 
see \cite[Proposition 4.7]{SU}. Such theories admit a description which
is inductive on the dimension of a variety - see Theorem 2.3 below.

Let $X$ be a smooth quasi-projective variety. Consider the following {\it resolution category} $\rc(X)$ of $X$.
Objects of $\rc(X)$ are diagrams $Z\stackrel{z}{\row}X\stackrel{\rho}{\low}\wt{X}$, where
$z$ is an embedding of a closed subscheme (which may be singular), and $\rho$ is a projective birational morphism of smooth
varieties, which is an isomorphism outside $Z$ and such that $V=\rho^{-1}(Z)$ is a divisor with strict
normal crossings.

Morphisms are commutative diagrams:
\begin{equation}
\label{cmor}
\xymatrix @-1.2pc{
Z_2 \ar @{->}[r]^(0.5){z_2} \ar @{->}[d]_(0.5){i} &X \ar @{=}[d] & \wt{X}_2 \ar @{->}[l]_(0.5){\rho_2}
\ar @{->}[d]^(0.5){\pi}\\
Z_1 \ar @{->}[r]_(0.5){z_1} & X & \wt{X}_1 \ar @{->}[l]^(0.5){\rho_1}.
}
\end{equation}
Among these we will distinguish ones of especially simple kind:
\begin{itemize}
\item[]
type I: \ \ $i=id$, $\pi$ is a single blow-up
over $V_1$ permitted w.r.to $V_1$ - see \cite[Definition 8.1]{SU};
\item[]
type II: \ \ $\pi=id$.
\end{itemize}
We will denote respective morphisms as $Mor_I$ and $Mor_{II}$, respectively.
Note, that for morphisms of type I, $\pi^{-1}(V_1)=V_2$.

Consider also the category $\rc^1(X)$ of diagrams
$Z\stackrel{z}{\row}X\times\pp^1\stackrel{\rho}{\low}\wt{X\times\pp^1}$, where
$z$ is an embedding of a closed subscheme, and $\rho$ is projective birational map, isomorphic outside $Z$,
where $W=\rho^{-1}(Z)$ is a divisor with strict normal crossings having no components over $0$ and $1$,
such that the preimages $\wt{X}_0=\rho^{-1}(X\times 0)$ and $\wt{X}_1=\rho^{-1}(X\times 1)$ are
smooth divisors on $\wt{X\times\pp^1}$,
and such that $W\cap\wt{X}_0\hookrightarrow\wt{X}_0$ and $W\cap\wt{X}_1\hookrightarrow\wt{X}_1$
are divisors with strict normal crossings.
Morphisms can be defined in the same way as for $\rc(X)$, but we will not need them.

We have maps $\partial_0,\partial_1: Ob(\rc^1(X))\row Ob(\rc(X))$ defined by:
$$
\partial_l(Z\stackrel{z}{\row}X\times\pp^1\stackrel{\rho}{\low}\wt{X\times\pp^1})=
(Z_l\stackrel{z_l}{\row}X\stackrel{\rho}{\low}\wt{X}_l),
$$
where $Z_l=(X\times\{l\})\cap Z$.

On free theories we have a structure of refined pull-backs - see \cite[Subsection 6.6]{LM}.
That is, given a cartesian square
$$
\begin{CD}
W & @>>> & Y\\
@VVV & & @VV{f}V \\
Z & @>>> & X
\end{CD}
$$
where $f$ is an l.c.i. morphism of relative codimension $d$, we have a morphism $f^!:A_*(Z)\row A_{*-d}(W)$
satisfying a number of properties (see \cite[Theorem 6.6.6]{LM}).

Consider the short bi-complex ${\frc}={\frc}(A^*)$:
$$
\begin{CD}
\Dc_{1,0}& @>{d_{1,0}}>>& \Dc_{0,0}\\
@.& & @AA{d_{0,1}}A\\
@.& & \Dc_{0,1}
\end{CD}
$$
where
\begin{itemize}
\item[$\cdot$ ]
$\Dc_{0,0}:=\bigoplus\limits_{\cv\in Ob(\rc(X))} \op{Image}(\rho^!:A_*(Z)\row A_*(V))$;
\item[$\cdot$ ]
$\Dc^I_{1,0}:=\bigoplus\limits_{\cv_2\row\cv_1\in Mor_I}\op{Image}(\rho^!:A_*(Z_1)\row A_*(V_1))$ - see (\ref{cmor});
\item[$\cdot$ ]
$\Dc^{II}_{1,0}:=\bigoplus\limits_{\cv_2\row\cv_1\in Mor_{II}}\op{Image}(\rho^!:A_*(Z_2)\row A_*(V_2))$, \hspace{1cm}
$\Dc_{1,0}=\Dc_{1,0}^I\oplus\Dc_{1,0}^{II}$;
\item[$\cdot$ ]
$\Dc_{0,1}:=\bigoplus\limits_{\cw\in Ob(\rc^1(X))}\op{Image}(\rho^!:A_{*+1}(Z)\row A_{*+1}(W))$.
\end{itemize}
and the differentials are defined as follows:
\begin{itemize}
\item[$\cdot$ ] $d^I_{1,0}((id,\pi):\cv_2\row\cv_1,x)=(\cv_1,x)-(\cv_2,\pi^{!}(x))$
where $\pi^!:A_*(V_1)\row A_*(V_2)$ is the refined pull-back relative to $\pi:\wt{X}_2\row\wt{X}_1$.
\item[$\cdot$ ]
$d^{II}_{1,0}((i,id):\cv_2\row\cv_1,y)=(\cv_1,(i_V)_*(y))-(\cv_2,y)$ where $i_V:V_2\row V_1$ is the obvious inclusion.
\item[$\cdot$ ]
$
d_{0,1}(\cw,\sum_S(h_S)_*(y_S))=
(\partial_0\cw,\sum_{S} (h_{S,0})_*i_{S,0}^{\star}(y_S))-
(\partial_{1}\cw,\sum_{S} (h_{S,1})_*i_{S,1}^{\star}(y_S))
$
where $h_S:S\row W$ are the inclusions of the irreducible components of $W$, $y_S\in A_{*+1}(S)$, and $i_{S,0}$
and $i_{S,1}$ are inclusions of the divisors $S_{0}$ and $S_{1}$ in $S$.
(The maps $i_{S,0}^{\star}$ and $i_{S,1}^{\star}$ are as in Definition \ref{starpullback}.)
\end{itemize}

Let us denote by $H(\frc)$ the $0$-th homology of the total complex $Tot(\frc)$ of $\frc$.

For elements of $c_{0,0}$ we will also use the notation $(V\stackrel{v}{\row}\wt{X}\stackrel{\rho}{\row} X,\gamma)$ instead of
$(Z\stackrel{z}{\row} X\stackrel{\rho}{\low}\wt{X},\gamma)$ as it contains the needed maps.
We have:

\begin{theorem}{\rm (\cite[Theorem 4.23]{SU})}
\label{HcA}
Let $A^*=\Omega^*\otimes_{\laz}A$. Then there is a natural identification:
$$
H({\frc})=\ov{A}^*
$$
defined by: $(V\stackrel{v}{\row}\wt{X}\stackrel{\rho}{\row}X,\gamma)\mapsto
\frac{\rho_*v_*(\gamma)}{\rho_*(1)}$.
\end{theorem}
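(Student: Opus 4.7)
The plan is to construct a natural map $\phi: \Dc_{0,0} \row \ov{A}^*(X)$ by $(V \stackrel{v}{\row} \wt{X} \stackrel{\rho}{\row} X,\gamma) \mapsto \rho_*v_*(\gamma)$ (note that $\rho_*(1)=1$ since $\rho$ is projective birational, so the normalising denominator is trivial), to check that it kills the image of $d_{1,0}+d_{0,1}$ so as to factor through $\ov{\phi}: H(\frc) \row \ov{A}^*(X)$, and finally to show $\ov{\phi}$ is a bijection.

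First I would verify well-definedness. The image of $\phi$ lies in $\ov{A}^*(X)$ because $\rho_*v_*(\gamma)$ is supported on the proper closed subscheme $Z$, hence by $(LOC)$ vanishes on the generic point and therefore lies in the non-constant summand. For vanishing on $d_{1,0}^I$ (a single permissible blow-up $\pi: \wt{X}_2 \row \wt{X}_1$ with common $X$ and $Z$), functoriality of proper pushforward together with $\rho_1 = \rho_2 \circ \pi$ gives $\rho_{2,*}v_{2,*}(\pi_V)^{!}(x)=\rho_{1,*}v_{1,*}(x)$. For $d_{1,0}^{II}$, the ambient variety $\wt{X}$ does not change, and the identity $v_{1,*}\circ (i_V)_* = v_{2,*}$ for the closed embedding $V_2 \hookrightarrow V_1$ gives the vanishing. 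For $d_{0,1}$, the $\pp^1$-family $\wt{X\times\pp^1}$ with SNC divisor $W$ provides a rational equivalence between the restrictions to $\wt{X}_0$ and $\wt{X}_1$, so pushing forward to $X$ via $\rho$ produces the same class by $\pp^1$-homotopy invariance of $A^*$. Surjectivity of $\ov{\phi}$ is then easy: any $\alpha \in \ov{A}^*(X)$ is, by $(LOC)$, of the form $i_*(\beta)$ with $\beta \in A_*(Z)$ for some closed $Z \subsetneq X$, and Hironaka in characteristic zero provides a projective birational $\rho: \wt{X} \row X$ isomorphic outside $Z$ with $V=\rho^{-1}(Z)$ an SNC divisor; then $\rho^{!}(\beta) \in \Dc_{0,0}$ maps to $\alpha$.

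The serious content is injectivity, which is also where I expect the main obstacle. The guiding principle is that since $A^* = \Omega^* \otimes_\laz A$ is a \emph{theory of rational type}, $\ov{A}^*(X)$ admits a generators-and-relations presentation parameterised by exactly the data of $\rc(X)$ and $\rc^1(X)$: generators are classes $\rho_*v_*(\gamma)$ associated to resolution diagrams $(V \row \wt{X} \row X, \gamma)$ supported in positive codimension, and the defining relations come from (a) comparing two resolutions via a chain of permissible blow-ups (matching $d_{1,0}^I$), (b) comparing two choices of ambient closed subscheme (matching $d_{1,0}^{II}$), and (c) rational equivalences produced by $\pp^1$-families with SNC divisors (matching $d_{0,1}$). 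The injectivity argument would then reduce to showing that given two elements of $\Dc_{0,0}$ with equal images in $\ov{A}^*(X)$, one can exhibit an explicit chain in $\Dc_{1,0}\oplus\Dc_{0,1}$ bounding their difference, using that any two resolution data admit a common refinement via permissible blow-ups and shrinking/enlarging $Z$.

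The hardest part is this last translation: turning the "theory of rational type" presentation into the precise combinatorics of $\rc(X)$ and $\rc^1(X)$, in particular the bookkeeping with $\cmor_I$ versus $\cmor_{II}$ morphisms and the SNC compatibility at $\wt{X}_0, \wt{X}_1$ required in $\rc^1(X)$. Once one verifies that every Levine--Morel-type relation in $\ov{A}^*(X)$ lifts to such a boundary — and that refined pullbacks $\rho^{!}$ give coherent representatives in $A_*(V)$ — the bijectivity of $\ov{\phi}$ follows, establishing the identification $H(\frc)=\ov{A}^*$.
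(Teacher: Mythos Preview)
The paper does not prove this statement here; it is quoted from \cite[Theorem~4.33]{SU}. So there is no in-paper proof to compare against directly, but your outline can still be assessed on its own terms, and it contains a concrete error.

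Your assertion that $\rho_*(1)=1$ for a projective birational map between smooth varieties is false for $\Omega^*$ (and hence for a general $A^*=\Omega^*\otimes_{\laz}A$). Take $\pi:\wt{X}\to\pp^2$ the blow-up at a point; pushing $\pi_*(1_{\wt{X}})$ further to $\spec(k)$ yields $[\wt{X}]\in\laz$, while pushing $1_{\pp^2}$ yields $[\pp^2]$, and these differ already on Chern numbers ($c_1^2=8$ versus $9$). The normalising denominator $\rho_*(1)$ in the statement is therefore genuinely nontrivial, and the paper treats it as such throughout --- see (\ref{defGX}), Lemma~\ref{lem2d01}, and the handling of the first part of $d^{\frc}_{1,0}$ in the induction step.

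This error breaks your check that $\phi$ annihilates $d_{1,0}^I$. Your line ``functoriality of proper pushforward together with $\rho_1=\rho_2\circ\pi$ gives $\rho_{2,*}v_{2,*}(\pi_V)^!(x)=\rho_{1,*}v_{1,*}(x)$'' is not correct: one has $(v_2)_*(\pi_V)^!(x)=\pi^*(v_1)_*(x)$, so the left side equals $(\rho_1)_*\bigl(\pi_*(1)\cdot (v_1)_*(x)\bigr)$, which is not $(\rho_1)_*(v_1)_*(x)$ in general. With the correct map $\phi(\gamma)=\rho_*v_*(\gamma)/\rho_*(1)$ the argument does go through, but only by invoking the condition $\gamma\in im(\rho^!)$ built into the definition of $\Dc_{0,0}$, which you do not use: it forces $v_*(\gamma)=\rho^*(w)$ for some $w$, whence the projection formula gives $\rho_*v_*(\gamma)/\rho_*(1)=w$ independently of the chosen resolution, exactly as in the paper's computation following Proposition~\ref{c00wtH}. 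The remainder of your outline (surjectivity via Hironaka, injectivity as the hard part) is reasonable in shape, but the well-definedness step needs to be redone with the denominator and the $im(\rho^!)$ hypothesis in place.
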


This permits to describe such a theory inductively on the dimension of $X$.

\subsection{Divisor classes and refined pull-backs}
\label{divclassrefpull}

In any oriented cohomology theory $A^*$ one can introduce the notion of Chern classes $c^A_i$ of vector bundles,
and to any such theory one can associate a formal group law - see \cite[Subsection 2.3]{SU},
which expresses the first Chern class of a tensor product of two line bundles in terms of the first Chern classes of
the factors. In the case of a {\it theory of rational type}, this group law determines the theory completely.
We will denote as $x+_A y$, respectively $[n]\cdot_A x$, the formal sum of $x$ and $y$, respectively, the formal multiple of $x$ (that is, $n$ copies of $x$, formally added), in the
sense of the formal group law of the theory $A^*$.

Recall that a strict normal crossing divisor $D=\sum_{I_0\in L} l_{I_0}\cdot D_{I_0}$
has a {\it divisor class} $[D]\in A^0(D)$ such that  
$d_*([D])=c^A_1(\co(D))\in A^1(X)$, for the embedding $d:D\row X$.
Having $\lambda_{I_0}=c^A_1(\co(D_{I_0}))$, the idea is to write the "formal sum"
$\sum_{I_0\in L}^A[l_{I_0}]\cdot_A\lambda_{I_0}$
as $\sum_{I_1\subset L}(\prod_{I_0\in I_1}\lambda_{I_0})
\cdot F^{l_{I_0};I_0\in L}_{I_1}(\vec{\lambda})$,
where $F^{l_{I_0};I_0\in L}_{J_1}=\left(F^{l_{I_0};I_0\in L}_{J_1}\right)^A$ is some power series with $A$-coefficients, and then
define:
\begin{definition} {\rm (\cite[Definition 3.1.5]{LM})}
\label{divclass}
$$
[D]:=
\sum_{I_1\subset L}(\hat{d}_{I_1})_*(1)\cdot F^{l_{I_0};I_0\in L}_{I_1}(\vec{\lambda}),
$$
where $\hat{d}_{I_1}:D_{I_1}=\cap_{I_0\in I_1}D_{I_0}\row |D|$ is the closed embedding.
\end{definition}
The result does not depend on how you subdivide
the above formal sum into pieces, but there is some standard way. The convention is
(see \cite[Subsection 3.1]{LM}) to define $F^{l_{I_0};I_0\in L}_{I_1}$
as the sum of those monomials which are
made exactly of $\lambda_{I_0},\,I_0\in I_1$ divided by the $(\prod_{I_0\in I_1}\lambda_{I_0})=:\lambda^{I_1}$.

Due to the results of M.Levine-F.Morel from \cite{LM} we have a structure of refined pull-backs
for l.c.i. morphisms for Algebraic Cobordism theory $\Omega^*$, and so, for any theory obtained from
it by change of coefficients. In the case of strict normal crossing divisors such maps can be
described in an explicit combinatorial way.

\begin{definition}
\label{starpullback}
Having a divisor $D=\sum_{I_0\in L} l_{I_0}\cdot D_{I_0}$ with strict normal crossings on $X$,
we can define the pull-back:
$$
d^{\star}: A_*(X)\row A_{*-1}(D)
$$
by the formula
$$
d^{\star}(x)=\sum_{I_1\subset L}
(\hat{d}_{I_1})_*d_{I_1}^*(x)\cdot F^{l_{I_0};I_0\in L}_{I_1}(\vec{\lambda}),
$$
where $d_{I_1}:D_{I_1}\row X$ is the regular embedding of the $I_1$-st face of $D$.
\end{definition}
Notice, that such a pull-back clearly depends on the multiplicity of the components.
Also, since for $I'_1\subset I_1$, for $d_{I_1/I'_1}:D_{I_1}\row D_{I'_1}$, we have:
$(d_{I_1/I'_1})_*(1)=\prod_{I_0\in I_1\backslash I'_1}\lambda_{I_0}$,
the projection formula shows that it does not matter, how
one chooses the $F^{l_{I_0};I_0\in L}_{I_1}$
(in particular, one can choose these to be zero for $|I_1|>1$).

Let
\begin{equation}
\label{divsquare}
\xymatrix @-0.7pc{
E \ar @{->}[r]^(0.5){e} \ar @{->}[d]_(0.5){\wtdi{f}}&
Y \ar @{->}[d]^(0.5){f}\\
D \ar @{->}[r]_(0.5){d} & X.
}
\end{equation}
be a Cartesian square, where $X$ and $Y$ are smooth and $D\stackrel{d}{\lrow}X$ and
$E\stackrel{e}{\lrow}Y$ are divisors with strict normal crossings
(closed codimension 1 subschemes given by principal ideals whose $div$ is a strict normal
crossing divisor).
Then we can define:
$$
\wtdi{f}^{\,\star}:A^*(D)\row A^*(E)
$$
as follows.
Suppose, $D=\sum_{I_0\in L}l_{I_0}\cdot D_{I_0}$,
$E=\sum_{J_0\in M}m_{J_0}\cdot E_{J_0}$, where $D_{I_0}$ and $E_{J_0}$ are irreducible components;
$\lambda_{I_0}=c^A_1(\co(D_{I_0}))$,
$\mu_{J_0}=c^A_1(\co(E_{J_0}))$, and $f^*(D_{I_0})=\sum_{J_0\in M}p_{I_0,J_0}\cdot E_{J_0}$.
Notice, that if $p_{I_0,J_0}\neq 0$, for some $I_0$ and $J_0$,
then we have the natural map $f_{J_0,I_0}:E_{J_0}\row D_{I_0}$,
and so the map $f_{J_1,I_0}:E_{J_1}\row D_{I_0}$, for any $J_1\ni J_0$.
Assume that the coefficents $F^{p_{I_0,J_0};J_0\in M}_{J_1}$
in the presentation of $\sum_{J_0\in M}^A[p_{I_0,J_0}]\cdot_A\mu_{J_0}$ are chosen in such a way that
$F^{p_{I_0,J_0};J_0\in M}_{J_1}=0$, if $p_{I_0,J_0}=0$, 
for at least one $J_0\in J_1$
(notice, that there are no monomials divisible by $\mu^{J_1}$
in the $\sum_{J_0\in M}^A[p_{I_0,J_0}]\cdot_A\mu_{J_0}$,
so any "reasonable" choice will do).

\begin{definition}
\label{fstar}
Let $x=\sum_{I_0}(\hat{d}_{I_0})_*(x_{I_0})$, for some $x_{I_0}\in A^*(D_{I_0})$. Define:
$$
\wtdi{f}^{\,\star}(x):=\sum_{I_0\in L}\sum_{J_1\subset M}
(\hat{e}_{J_1})_*f_{J_1,I_0}^*(x_{I_0})\cdot F^{p_{I_0,J_0};J_0\in M}_{J_1}(\vec{\mu})\in A^*(E),
$$
where we ignore the terms with the zero $F^{p_{I_0,J_0};J_0\in M}_{J_1}$.
\end{definition}
Again, , since for $J'_1\subset J_1$, for $e_{J_1/J'_1}:E_{J_1}\row E_{J'_1}$, we have:
$(e_{J_1/J'_1})_*(1)=\prod_{J_0\in J_1\backslash J'_1}\mu_{J_0}$, the projection formula shows that
it does not matter, how we choose the $F^{p_{I_0,J_0};J_0\in M}_{J_1}$.

It follows from \cite[Lemmas 7.20, 7.22]{SU} that the above maps are just "refined pull-backs"
$d^!$ and $f^!$ of M.Levine-F.Morel (see \cite[Section 6]{LM}).

The above combinatorial pull-backs satisfy some sort of "excess intersection formula"
- see \cite[Proposition 7.21]{SU}, which (in the generality we use here)
is just a particular case of \cite[Theorem 6.6.6(2)(a)]{LM}.

\begin{proposition} {\rm (Multiple points excess intersection formula)}\\
\label{MPEIF}
Let $A^*$ be a theory satisfying $(CONST)$. Then,
in the above situation, we have:
\begin{itemize}
\item[$(1)$ ]
$$
e_*\circ \wtdi{f}^{\,\star} = f^*\circ d_*.
$$
\item[$(2)$ ] Suppose, $f$ is projective. Then
$$
\wtdi{f}_*\circ e^{\star}= d^{\star}\circ f_*.
$$
\end{itemize}
\end{proposition}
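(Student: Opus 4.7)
The plan is to reduce both statements to the general excess intersection formula of Levine--Morel, by leveraging the identification, provided by \cite[Lemma 7.19]{SU}, of the combinatorial pull-backs of Definitions \ref{starpullback} and \ref{fstar} with the refined pull-backs $d^!$ and $\ov{f}^{\,!}$ of \cite[Section 6]{LM}. Once this identification is invoked, parts $(1)$ and $(2)$ become direct specializations of \cite[Theorem 6.6.6(2)(a)]{LM}, applied to the Cartesian square (\ref{divsquare}); the hypothesis $(CONST)$ ensures that the formal-group-law manipulations underlying these refined pull-backs are available in our $A^*$.

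If one instead proceeds directly from the combinatorial definitions, the outline is as follows. For $(1)$, starting from $x=\sum_{I_0}(\hat{d}_{I_0})_*(x_{I_0})$, one expands $\ov{f}^{\,\star}(x)$ via Definition \ref{fstar} as a sum over pairs $(I_0,J_1)$ of terms $(\hat{e}_{J_1})_* f_{J_1,I_0}^*(x_{I_0})\cdot F^{p_{I_0,J_0};J_0\in M}_{J_1}(\ov{\mu})$, and then applies $e_*$, replacing each $(\hat{e}_{J_1})_*$ by the composed pushforward into $Y$. On the other side, $f^*\circ d_*(x)$ is evaluated by base change along the Cartesian square, the projection formula, and the identity $f^*(D_{I_0})=\sum_{J_0}p_{I_0,J_0}\cdot E_{J_0}$; the series $F^{p_{I_0,J_0};J_0\in M}_{J_1}$ is precisely what appears when one expands the formal-group-law expression for the class of the divisor $f^*D_{I_0}$ in terms of the $\mu_{J_0}$. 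A term-by-term comparison then closes part $(1)$.

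For $(2)$, with $f$ projective, the argument is dual: one expands $e^{\star}(f_*(y))$ using Definition \ref{starpullback}, applies the projection formula for $f_*$ componentwise over each face $E_{J_1}$ (which is proper over $D_{I_1}$ since $f$ is projective), and compares with the analogous expansion of $d^{\star}\circ f_*(y)$. The same indexing sets and the same formal-group-law coefficients appear on both sides, yielding the equality.

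The main obstacle, hidden inside \cite[Lemma 7.19]{SU} but worth flagging, is the formal-group-law bookkeeping under base change: one must verify that the power series $F^{p_{I_0,J_0};J_0\in M}_{J_1}$ arising from the multiplicities of $f^*(D_{I_0})$ indeed recover the Levine--Morel refined pull-back of the class $[D_{I_0}]$. This is what forces the convention $F^{p_{I_0,J_0};J_0\in M}_{J_1}=0$ whenever some $p_{I_0,J_0}=0$ for $J_0\in J_1$, and it is what makes the independence-of-choice statement noted after Definition \ref{fstar} essential for the formula to be well-defined. Everything else in the proposition follows formally once this combinatorial identification is in place.
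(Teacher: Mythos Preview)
Your proposal is correct and matches the paper's treatment: the paper does not give an independent proof of this proposition but simply records that it is \cite[Proposition 7.20]{SU}, which in the generality used here is a particular case of \cite[Theorem 6.6.6(2)(a)]{LM}, after the identification of the combinatorial $\star$-pull-backs with the Levine--Morel refined pull-backs via \cite[Lemma 7.19]{SU}. Your additional sketch of a direct combinatorial verification goes beyond what the paper offers, but is consistent with how such a statement is proved in \cite{SU}.
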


We also have the usual Excess Intersection Formula - see \cite[Theorem 5.19]{so2} and
\cite[Theorem 6.6.9]{LM}.
Consider cartesian square
$$
\begin{CD}
W&@>{f'}>>&Z\\
@V{g'}VV&&@VV{g}V\\
Y&@>>{f}>&X
\end{CD}
$$
with $f,f'$ - regular embeddings, and
$(g')^*(N_{Y\subset X})/N_{W\subset Z}=:M$ a vector bundle
of dimension $d$.

\begin{proposition}
\label{excess}
Let $A^*$ be any theory in the sense of Definition \ref{goct}.
In the above situation,
$$
g^*f_*(v)=f'_*(c^A_d(M)\cdot (g')^*(v));
$$
If $g$ is projective, then also:
$$
f^*g_*(u)=g'_*(c^A_d(M)\cdot (f')^*(u)).
$$
\end{proposition}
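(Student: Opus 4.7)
The plan is to invoke the standard deformation-to-the-normal-cone argument, which is established for algebraic cobordism in \cite[Theorem 6.6.9]{LM} and for a general oriented cohomology theory in the sense of Definition \ref{goct} in \cite[Theorem 5.19]{so2}; I outline the two main steps.

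First, reduce to the case where $f$ is the zero section of a vector bundle. Consider the double deformation space $M^{\circ}$ obtained from $X\times\pp^1$ by blowing up $Y\times\{\infty\}$ and removing the strict transform of $X\times\{\infty\}$; this is flat over $\pp^1$ with fiber $X$ over $0$ (containing $Y$ via $f$) and fiber $\cn_{Y\subset X}$ over $\infty$ (containing $Y$ as the zero section $s$). Base-changing along $g$ produces a matching deformation of $f':W\hookrightarrow Z$ to the zero section of $\cn_{W\subset Z}\subset(g')^*\cn_{Y\subset X}$, with the same excess bundle $\cm$ at both fibers. The localization axiom $(LOC)$ combined with refined pull-backs along the regular embeddings $\{0\},\{\infty\}\hookrightarrow\pp^1$ lets one transport the desired identity between the two fibers and thereby reduce to the zero-section case. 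There, writing $\cn:=\cn_{Y\subset X}$ and $\pi:\cn\row Y$, the Thom relation $s_*(v)=\pi^*(v)\cdot s_*(1)$ together with the Whitney-sum formula for the top Chern class applied to
$$
0\row\cn_{W\subset Z}\row(g')^*\cn\row\cm\row 0
$$
on $W$ yields $g^*s_*(1)=f'_*(c^A_d(\cm))$; the projection formula for $f'_*$ and the equality $(\pi g f')^{*}=(g')^{*}$ (which follows from $\pi\circ s=\op{id}_Y$ and $s\circ g'=g\circ f'$) upgrade this to the full first identity.

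The second identity, when $g$ is projective, is deduced from the first by factoring $g$ through its graph $Z\hookrightarrow Z\times X$ (a regular embedding, since $X$ is smooth) followed by the smooth projection $Z\times X\row X$: the graph factor is handled by the first part of the proposition applied to the resulting Cartesian square of regular embeddings, while the smooth projection is handled by the standard base-change and projection formulas. The main delicate point is ensuring that deformation to the normal cone interacts well with $f_*$ and $g^*$ in an arbitrary theory only assumed to satisfy the axioms of Definition \ref{goct}; this is precisely the content of \cite[Theorem 5.19]{so2} using $(LOC)$ to transport classes between the two fibers, and so the present proposition may be regarded as a convenient repackaging of that result together with \cite[Theorem 6.6.9]{LM}.
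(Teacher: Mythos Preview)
Your outline is correct and follows exactly the standard deformation-to-the-normal-cone argument; indeed the paper does not give its own proof of this proposition at all, but simply cites \cite[Theorem 5.19]{so2} and \cite[Theorem 6.6.9]{LM}, the same two references you invoke. What you have written is essentially a sketch of the content of those references, so there is nothing to compare.
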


We will also need some formulas related to the regular blow-up morphism.
By a permitted blow up we will mean a consecutive blow-up morphism with smooth centers which have
normal crossings with the respective exceptional divisor (of previous blow-ups) - see \cite[Definition 8.1]{SU}.

\begin{proposition} {\rm (\cite[Proposition 7.6]{SU})}
\label{vvter}
Let $A^*$ be any generalized oriented cohomology theory in the sense of Definition \ref{goct},
and $\rho:\widetilde{X}\row X$ be a permitted blow up of a smooth variety with smooth centers
$R_i$ and the respective components of the exceptional divisor $E_i\stackrel{\eps_i}{\row}R_i$.
Then one has exact sequences:
\begin{equation*}
\begin{split}
&(1)\hspace{1.5cm}
0\low A_*(X)\stackrel{\rho_*}{\llow}A_*(\widetilde{X})\llow
\oplus_i\kker(A_*(E_i)\stackrel{(\eps_i)_*}{\row}A_*(R_i)).
\hspace{5cm}\phantom{a}\\
&(2)\hspace{1.5cm}
0\row A^*(X)\stackrel{\rho^*}{\lrow}A^*(\widetilde{X})\lrow
\oplus_i\coker(A^*(R_i)\stackrel{(\eps_i)^*}{\row}A^*(E_i))
\hspace{5cm}\phantom{a}
\end{split}
\end{equation*}
\end{proposition}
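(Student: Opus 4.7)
The plan is to reduce, by induction on the number of consecutive blow-ups in the tower composing $\rho$, to the case of a single blow-up $\rho:\wt{X}\row X$ of a smooth variety $X$ along one smooth center $R$ of codimension $c$, with exceptional divisor $E=\pp(\cn_{R/X})\stackrel{\eps}{\row}R$ and embedding $j:E\hookrightarrow\wt{X}$.

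For the base case of $(2)$, I would invoke the classical blow-up formula for oriented cohomology theories due to M.Levine--F.Morel (\cite{LM}), which is itself a consequence of the projective bundle formula, the localization axiom $(LOC)$, and excess intersection: there is a natural direct-sum decomposition
\[
A^*(\wt{X}) = \rho^*A^*(X)\,\oplus\,\bigoplus_{k=1}^{c-1} j_*\bigl(\xi^{k-1}\cdot\eps^*A^{*-k}(R)\bigr),
\]
where $\xi=c_1^A(\co_E(1))$. Combined with the projective bundle formula $A^*(E)=\bigoplus_{k=0}^{c-1}\xi^k\cdot\eps^*A^{*-k}(R)$, this identifies the quotient $A^*(\wt{X})/\rho^*A^*(X)$ with $\coker(\eps^*)$ via the restriction $j^*$, giving the desired exactness. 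For part $(1)$, the analogous decomposition
\[
A_*(\wt{X}) = \rho^*A_*(X)\,\oplus\,j_*\kker\bigl((\eps)_*\bigr)
\]
follows by the same argument with pushforwards in place of pullbacks, using Poincar\'e duality $A_*(Y)=A^{\dim Y-*}(Y)$ for the smooth varieties involved.

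For the inductive step, I would write $\rho$ as $\wt{X}=X_n\stackrel{\pi_n}{\row}X_{n-1}\row\cdots\row X_0=X$, where each $\pi_k$ is the blow-up of a smooth center $R'_k$ chosen transversal to the exceptional divisors created by $\pi_1,\ldots,\pi_{k-1}$. The components $E_i\subset\wt{X}$ split into the exceptional divisor of $\pi_n$ itself and the strict transforms under $\pi_n$ of the exceptional divisors already present in $X_{n-1}$. Applying the base case to $\pi_n$ and the inductive hypothesis to $X_{n-1}\row X_0$ produces two short exact sequences, and splicing them term-by-term gives the full sequence for $\rho$.

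The main obstacle will be this splicing: for an earlier exceptional divisor $E_i'\subset X_{n-1}$ one must identify $\coker(A^*(R_i)\row A^*(E_i'))$ (the contribution from the inductive hypothesis on $X_{n-1}$) with $\coker(A^*(R_i)\row A^*(E_i))$, where $E_i\subset\wt{X}$ is the strict transform of $E_i'$ under $\pi_n$. Since the center $R'_n$ of $\pi_n$ is smooth and transversal to $E_i'$, the map $E_i\row E_i'$ is itself the smooth blow-up along $R'_n\cap E_i'$, and the required identification follows from the excess intersection formula (Proposition \ref{excess}) applied to the cartesian square formed by the embedding $R'_n\cap E_i'\hookrightarrow R'_n$ lying over $E_i'\hookrightarrow X_{n-1}$: transversality makes the excess bundle trivial of rank zero, reducing the comparison to a straightforward diagram chase. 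Part $(1)$ is handled symmetrically using the pushforward version of excess intersection together with the observation that $\rho_*\rho^*=\op{id}$ for any proper birational morphism of smooth varieties, which yields injectivity of the dual map.
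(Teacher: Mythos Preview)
The paper does not prove this proposition; it simply quotes it from \cite[Proposition 7.6]{SU}. So there is no proof here to compare against, and your sketch stands on its own.

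Your approach---induction on the length of the tower, with the base case supplied by the Levine--Morel blow-up formula---is the natural one and is essentially how the result is established in \cite{SU}. One point in your splicing step is phrased imprecisely, though. You write that one must ``identify $\coker(A^*(R_i)\to A^*(E'_i))$ with $\coker(A^*(R_i)\to A^*(E_i))$''. In general these cokernels are \emph{not} isomorphic: since $E_i\to E'_i$ is itself a nontrivial blow-up (along $R'_n\cap E'_i$), the group $A^*(E_i)$ is strictly larger than $A^*(E'_i)$, so the cokernel on the $E_i$ side is larger as well. Fortunately the statement does not claim that the rightmost map is surjective, only that the three-term sequence is exact at $A^*(X)$ and at $A^*(\widetilde{X})$. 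For the latter you need exactly the detection criterion
\[
x\in\operatorname{im}(\rho^*) \iff e_i^*(x)\in\operatorname{im}(\eps_i^*)\ \text{for all }i,
\]
and in the inductive step this follows from the injectivity of the blow-up pullback $(\pi_n|_{E_i})^*$ together with the factorisation $\eps_i^*=(\pi_n|_{E_i})^*\circ(\eps'_i)^*$, not from any identification of cokernels. The transversal base change you invoke (excess bundle of rank zero) is what makes this factorisation clean. Part (1) is handled by the dual argument using surjectivity of $(\pi_n|_{E_i})_*$. With that adjustment your outline is correct.
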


Finally, we will require some generalization of the notion of a morphism of theories.

\begin{definition}
\label{pre-mor}
Let $(B')^*$, $(B'')^*$ be any theories in the sense of Definition \ref{goct}.
A pre-morphism of theories is an additive morphism of functors $F:(B')^*\row (B'')^*$ on $\smk$
which, in addition, respects push-forwards and maps of multiplications by the 1-st Chern classes of line bundles. That is,
\begin{itemize}
\item[$(1)$] For a map $f:X\row Y$ in $\smk$, and $u\in (B')^*(Y)$, we have: $F(f^*(u))=f^*(F(u))$;
\item[$(2)$] For a projective map $g:X\row Y$ in $\smk$, and $v\in (B')^*(X)$, we have: $F(g_*(v))=g_*(F(v))$;
\item[$(3)$] For $X\in Ob(\smk)$, a line bundle $L$ on $X$ and $v\in (B')^*(X)$, we have:
$F(c^{B'}_1(L)\cdot v)=c^{B''}_1(L)\cdot F(v)$.
\end{itemize}
\end{definition}
In other words, we don't require $F$ to be a ring homomorphism, but we still keep a rather firm grip on the multiplicative structure
with the help of $(3)$ and $(1)$. Clearly, the composition of pre-morphisms is a pre-morphism and so is the usual morphism of theories.
Considering $\pp^{\infty}$ and denoting $z^{C}=c^C_1(O(1))$, we get that for any power series $\alpha(t)\in B'[[t]]=(B')^*(\pp^{\infty})$,
we have: $F(\alpha(z^{B'}))=F(\alpha)(z^{B''})$. Using the fact that $F$ commutes with the pull-backs for the Segre embedding
$\pp^{\infty}\times\pp^{\infty}\row\pp^{\infty}$, we obtain:
\begin{equation}
\label{pre-resp-FGL}
F(\alpha(x^{B'}+_{B'}y^{B'}))=F(\alpha)(x^{B''}+_{B''}y^{B''}),
\end{equation}
where $x$ and $y$ are 1-st Chern classes of $O(1)$ from two $\pp^{\infty}$-factors.

Here is a typical situation where pre-morphisms appear.

\begin{example}
\label{exa-pre-mor}
Let $B^*$ be some theory in the sense of Definition \ref{goct} and $X',X''\in OB(\smk)$. Then we can consider theories
$(B')^*(Y):=B^*(Y\times X')$ and $(B'')^*(Y):=B^*(Y\times X'')$. Note, that although the theories $B'$ and $B''$ will almost never
be constant, these will still satisfy the conditions of the Definition \ref{goct}. In this situation, we can produce two types of
pre-morphisms:
\begin{itemize}
\item[$\bullet$] Let $p:X'\row X''$ be a projective morphism. Then the map $(id\times p)_*$ induces a pre-morphism of theories
$F:(B')^*\row (B'')^*$. Note, that in this case, the formal group laws of $(B')^*$ and $(B'')^*$ are the images of that of $B^*$
(under the natural morphisms from $B^*$ to these theories).
\item[$\bullet$] Let $x'\in B^*(X')$, then multiplication by $x'$ defines a pre-endomorphism of the theory $(B')^*$.
\end{itemize}
\end{example}

\section{Discrete Taylor expansion}
\label{DiscrTaylor}

How to work with non-additive maps
between additive objects? We need some sort of "calculus".

\begin{definition}
\label{polynMap}
Let $A\stackrel{f}{\row}B$ be a map between abelian groups.
Define $\de{f}:A\times A\row B$ by the formula:
$$
\de{f}(a_1,a_2):=f(a_1+a_2)-f(a_1)-f(a_2).
$$
\end{definition}
This derivative is trivial if and only if the map is additive.

Define $\den{q}{f}$ inductively as a partial derivative $\dep{i}{}$ of $\den{q-1}{f}$ with respect to one of the coordinates
(where, from symmetry, it does not matter to which coordinate we apply $\de$). We get a symmetric function
$\den{q}{f}:A^{\times (q+1)}\row B$. We also set $\den{-1}{f}:A^{\times 0}\row B$ to be the zero element
of $B$. Since the function $\den{q}{f}$ is symmetric, we can apply it to $A^J$, for any set $J$ of cardinality $q+1$.

Let $M_0$ be a finite set. Define the collection of sets $M_i$ inductively
by the formula: $M_i:=2^{M_{i-1}}$.
We have a map $Supp:M_i\row M_{i-1}$, for $i\geq 2$, defined by: $Supp(J_i)=\cup_{J_{i-1}\in J_i}J_{i-1}$.
Denote also $M_i\backslash\{\emptyset\}$ as $\breve{M}_i$ (for $i\geq 1$).
Suppose,
$x_{J_0},\,J_0\in M_0$ are elements of $A$, then we have:

\begin{proposition} {\rm (Discrete Taylor Expansion)}
\label{DTE}
$$
f\left(\sum_{J_0\in M_0} x_{J_0}\right)=\sum_{J_1\in M_1}(\den{|J_1|-1}{f})(x_{J_0}|_{J_0\in J_1}).
$$
\end{proposition}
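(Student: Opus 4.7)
The plan is to derive an explicit closed form for the higher differences $\Den{q}{f}$ as an alternating sum over nonempty subsets, and then substitute this expression into the right-hand side of the desired identity and collapse via inclusion--exclusion on the Boolean lattice $2^{M_0}$.

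First I would prove by induction on $q \geq 0$ the closed form
$$
\Den{q}{f}(a_0, \ldots, a_q) \;=\; \sum_{\emptyset \neq S \subseteq \{0,\ldots,q\}} (-1)^{q+1-|S|}\, f\!\left(\sum_{i \in S} a_i\right).
$$
The base case $q=0$ is immediate since $\Den{0}{f} = f$. For the inductive step, write $\Den{q+1}{f}$ as $\De$ applied to $\Den{q}{f}$ in the last coordinate, expand $\Den{q}{f}$ via the induction hypothesis, and separate terms according to whether the subset $S \subseteq \{0,\ldots,q\}$ contains the last index; a short calculation shows the contributions regroup into the claimed alternating sum for $\Den{q+1}{f}$ (in particular, the ``constant'' terms of $\Den{q}{f}$ not involving the last variable contribute $c - c - c = -c$ under $\De$, which precisely flips the sign to produce the correct coefficient). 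The symmetry remark in the definition of $\Den{q}{f}$ ensures the result is independent of which coordinate we chose to differentiate.

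Substituting this closed form into the right-hand side of the Discrete Taylor Expansion, and noting that the $J_1 = \emptyset$ term vanishes since $\Den{-1}{f} = 0$, gives
$$
\sum_{J_1 \in M_1} \Den{|J_1|-1}{f}(x_{J_0}|_{J_0 \in J_1})
\;=\; \sum_{\emptyset \neq S \subseteq M_0} f\!\left(\sum_{J_0 \in S} x_{J_0}\right) \cdot \sum_{S \subseteq J_1 \subseteq M_0} (-1)^{|J_1|-|S|}.
$$
Setting $T = J_1 \setminus S$, the inner sum becomes $\sum_{T \subseteq M_0 \setminus S} (-1)^{|T|}$, which equals $0$ unless $M_0 \setminus S = \emptyset$ (where it equals $1$). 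Only $S = M_0$ survives, producing $f(\sum_{J_0 \in M_0} x_{J_0})$, which is the left-hand side.

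The main obstacle is purely combinatorial bookkeeping: tracking signs correctly through the induction and respecting the convention $\Den{-1}{f} = 0$, which is what forces the sums to range only over nonempty subsets and is ultimately what produces the cancellation at $S \subsetneq M_0$. Once the closed form of the first step is secured, the whole identity reduces to the standard Boolean-lattice vanishing $\sum_{T \subseteq U}(-1)^{|T|} = \delta_{U,\emptyset}$.
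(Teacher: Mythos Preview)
Your proof is correct: the closed-form identity for $\Den{q}{f}$ as an alternating sum over nonempty subsets is established cleanly by the induction you sketch (the sign bookkeeping in the inductive step checks out), and the subsequent inclusion--exclusion collapse on $2^{M_0}$ is exactly the right way to finish. The paper itself states this proposition without proof, treating it as an elementary combinatorial identity; your argument supplies the missing details and in fact gives a bit more, since the explicit formula $\Den{q}{f}(a_0,\ldots,a_q)=\sum_{\emptyset\neq S}(-1)^{q+1-|S|}f(\sum_{i\in S}a_i)$ is useful in its own right and makes the symmetry of $\Den{q}{f}$ manifest.
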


The behavior of Taylor expansions under the composition of maps is described by the
Chain Rule.
Let
$$
A\stackrel{f}{\lrow}B\stackrel{g}{\lrow}C
$$
be two composable maps of sets between abelian groups. Then
\begin{equation*}
\begin{split}
&(g\circ f)\left(\sum_{J_0\in M_0}x_{J_0}\right)=
g\left(\sum_{J_1\in M_1}(\den{|J_1|-1}{f})(x_{J_0}|_{J_0\in J_1})\right)=\\
&\sum_{J_2\in M_2}(\den{|J_2|-1}{g})
\left((\den{|J_1|-1}{f})(x_{J_0}|_{J_0\in J_1})|_{J_1\in J_2}\right).
\end{split}
\end{equation*}
On the other hand,
$$
(g\circ f)\left(\sum_{J_0\in M_0}x_{J_0}\right)=
\sum_{J_1\in M_1}(\den{|J_1|-1}{(g\circ f)})(x_{J_0}|_{J_0\in J_1}).
$$
Taking into account that this is some universal identity (valid for all $A,B,C,f,g$), we obtain:

\begin{proposition} {\rm (Discrete Chain Rule)}
\label{CR}
\begin{equation*}
\begin{split}
(\den{|J_1|-1}{(g\circ f)})(x_{J_0}|_{J_0\in J_1})=
\sum_{\substack{J_2\in M_2\\  Supp(J_2)=J_1}}(\den{|J_2|-1}{g})
\left((\den{|J'_1|-1}{f})(x_{J_0}|_{J_0\in J'_1})|_{J'_1\in J_2}\right).
\end{split}
\end{equation*}
\end{proposition}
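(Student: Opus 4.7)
The strategy, foreshadowed in the discussion preceding the statement, is to compute $(g\circ f)(\sum_{J_0 \in M_0} x_{J_0})$ in two different ways using the Discrete Taylor Expansion (Proposition \ref{DTE}) and then upgrade the resulting scalar identity to the claimed term-by-term one by a universality argument.

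First I would apply Proposition \ref{DTE} directly to $g \circ f$, and separately apply it to $f$ and then to $g$ (with index set $M_1$ and formal variables $y_{J_1} := \Den{|J_1|-1}{f}(x_{J_0}|_{J_0 \in J_1})$). Setting the two expansions equal and regrouping the ``iterated'' one by the support $J_1 := \op{Supp}(J_2) \in M_1$ produces the scalar identity
\[
\sum_{J_1 \in M_1}\Den{|J_1|-1}{(g\circ f)}(x_{J_0}|_{J_0 \in J_1}) \;=\; \sum_{J_1 \in M_1} R_{J_1}(x),
\]
where $R_{J_1}(x)$ denotes the right-hand side of the Chain Rule. The key observation is that both summands depend only on $x_{J_0}$ for $J_0 \in J_1$: by definition for the left, and because $J'_1 \subseteq \op{Supp}(J_2) = J_1$ for every $J'_1 \in J_2$ on the right, so every $\Den{|J'_1|-1}{f}$ appearing in $R_{J_1}$ has arguments indexed by $J_1$.

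Since the scalar identity is universal (it holds for every finite $M_0$ and every $A, B, C, f, g$), I would specialize it by replacing $M_0$ with a fixed $J_1 \subseteq M_0$, obtaining
\[
\sum_{J'_1 \subseteq J_1}\Den{|J'_1|-1}{(g\circ f)}(x_{J_0}|_{J_0 \in J'_1}) \;=\; \sum_{J'_1 \subseteq J_1} R_{J'_1}(x)
\]
for every $J_1$. A straightforward induction on $|J_1|$ (equivalently, Möbius inversion on the Boolean lattice $2^{J_1}$) then isolates the $J'_1 = J_1$ summand on each side and yields the Chain Rule.

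The main obstacle is not the inductive extraction but a bookkeeping point in the first step: the Discrete Taylor Expansion of $g$ is being applied to the formal sum $\sum_{J_1 \in M_1} y_{J_1}$ in which the term $y_\emptyset = \Den{-1}{f}() = 0$ is present. One must verify that the formal expansion indexed by $M_2 = 2^{M_1}$ (as required by the statement) is really produced by this procedure, i.e.\ that the contributions of the ``degenerate'' $J_2$'s containing $\emptyset$ combine correctly with those of $J_2 \setminus \{\emptyset\}$. This is a direct consequence of the iterative definition of $\Den{q}{g}$ together with the convention $\Den{-1}{g} = 0$, but it is the step where careful accounting is required before the clean universality argument of Steps 2--3 can run.
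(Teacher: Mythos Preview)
Your approach is correct and is exactly the paper's argument: the paper computes $(g\circ f)\bigl(\sum_{J_0}x_{J_0}\bigr)$ in the two ways you describe and then simply invokes ``this is some universal identity (valid for all $A,B,C,f,g$)'' to extract the term-by-term equality, without spelling out the M\"obius inversion you supply. Your final paragraph about the degenerate $J_2$'s containing $\emptyset$ is over-cautious: applying Proposition~\ref{DTE} to $g$ with the index set $M_1$ (rather than $\ov{M}_1$) directly produces a sum over $M_2=2^{M_1}$, regardless of the fact that $y_{\emptyset}=0$, so there is no separate bookkeeping to do.
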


\begin{remark}
 A similar sort of ``discrete calculus'' can be found already in Chapter II.8 of \cite{EM} by S.Eilenberg and S.MacLane.
\end{remark}

\section{Operations and poly-operations}
\label{O+PO}

\begin{definition}
\label{oper}
Let $A^*$ and $B^*$ be cohomology theories in the sense of Definition \ref{goct}.
Under an operation $A^*\stackrel{G}{\row}B^*$ we will understand a morphism of
(contravariant) functors $\smk\row Sets$ pointed by zero, that is, a transformation commuting with all pull-back maps
and mapping $0$ to $0$.
\end{definition}

Consider the functor $\smk^{\times r}\stackrel{\prod^r}{\row}\smk$ mapping an $r$-tuple of varieties to their product over $k$.
We denote as $\boxtimes_{i=1}^r A_i^*$ the external product of theories $A_i^*;i\in\ov{r}=\{1,\ldots,r\}$ viewed as a functor
on $\smk^{\times r}$.

\begin{definition}
\label{poly-INEXop}
Let $A_i^*;i\in\ov{r}$ and $B^*$ be cohomology theories in the sense of Definition \ref{goct}.
\begin{itemize}
\item[$\bullet$] Under an $r$-nary (internal) poly-operation we will understand an operation
$\times_{i=1}^rA_i^*\stackrel{\hat{H}}{\row}B^*$ on $\smk$ whose value is zero if one of the coordinates is zero.
\item[$\bullet$] Under an $r$-nary (external) poly-operation we will understand the morphism of
(contravariant) functors $(\boxtimes_{i=1}^r A_i^*)\stackrel{H}{\row}B^*\circ(\prod^r)$
from $\smk^{\times r}$ to $Sets$ which is "poly-pointed" in the sense that it vanishes if one of the coordinates is zero.
In other words, for all $r$-tuples of smooth quasi-projective varieties $X_i;i\in\ov{r}$
we have a map: $\times_{i=1}^rA_i^*(X_i)\stackrel{H}{\row}B^*(\times_{i=1}^rX_i)$
commuting with the pull-backs for
$\times_{i=1}^rX_i\stackrel{\times_{i=1}^rf_i}{\lrow}\times_{i=1}^rY_i$ and vanishing as above.
\end{itemize}
\end{definition}

The following diagram of functors (and morphisms of functors)
permits to identify the sets of (external) and (internal) poly-operations.
$$
\xymatrix{
& & \smk \ar@{}[rdd]|-(0.25){\circlearrowright}
\ar@/^0.7pc/[rr]^{B^*}="3"
\ar@/_0.7pc/[rr]_{\times A_i^*}="4"
& & \op{Sets} \\
& & & & \\
\smk \ar[rr]_{\Delta}
\ar@/^1pc/[rruu]^{id}="1"
\ar@/_1pc/[rruu]_{\phantom{\square^r}}="2"
& & \smk^{\times r} \ar[uu]_{\prod^r} \ar@{}[lu]|-(0.3){\circlearrowleft}
\ar@/^0.5pc/[rruu]^{\phantom{A}}="5"
\ar@/_1.2pc/[rruu]_{\boxtimes A_i^*}="6"
& &
\ar@{}"1";"2"|(.3){\,}="7"
\ar@{}"1";"2"|(.7){\,}="8"
\ar@{=>}"7";"8"^{\blacktriangle}
\ar@{}"6";"5"|(.4){\,}="9"
\ar@{}"6";"5"|(.7){\,}="10"
\ar@{=>}"9";"10"_{\times\pi_i^*}
\ar@{}"3";"4"|(.3){\,}="11"
\ar@{}"3";"4"|(.6){\,}="12"
\ar@{=>}"12";"11"_{\hat{H}}
}
$$

Namely, restricting the (external) poly-operation $H$ along the "diagonal" functor $\Delta$
(sending $X$ to the "constant" $r$-tuple $(X;i\in\ov{r})$)
and composing it (on the left) with the morphism of
functors $\blacktriangle$ (given by the diagonal maps $X\stackrel{\blacktriangle}{\row}X^{\times r}$)
we obtain an (internal) poly-operation $\hat{H}$:
$$
(\times_{i=1}^rA_i^*)(X)\stackrel{\Delta^*(H)}{\lrow}B^*(X^{\times r})
\stackrel{\blacktriangle^*}{\lrow}B^*(X).
$$
Conversely, restricting the (internal) poly-operation $\hat{H}$ along the functor $\prod^r$ and
composing it (on the left) with the morphism of functors $\times_{i=1}^r\pi_i^*$
(where $\pi_i$ is the natural morphism of functors $\prod^r\Rightarrow pr_i:\smk^{\times r}\row\smk$
given by the projection to the $i$-th factor)
we get an
(external) poly-operation $H$:
$$
\times_{i=1}^rA_i^*(X_i)\stackrel{\times\pi_i^*}{\lrow}
(\times_{i=1}^rA_i^*)(\prod_{i=1}^rX_i)\stackrel{\hat{H}}{\lrow}B^*(\prod_{i=1}^rX_i).
$$
This provides a $1$-to-$1$ correspondence between (internal) and (external) poly-operations.

Notice, that although the notions of (external) and (internal) poly-operations are equivalent
on the whole category $\smk$, this will not be so if we restrict the dimension $d$ of our varieties.
Below we prove our main result by the induction on the dimension, and the proper tool in this
situation will be provided by the (external) poly-operations.

We will use the following two constructions with poly-operations:

\noindent$\bullet$\hspace{2mm}
"Internalization" of an (external) poly-operation is a particular case of the following more general construction.
Let $\ffi:J\row I$ be a surjective map of finite sets, and $H:\boxtimes_{j\in J}A^*_j\row B^*\circ (\prod^J)$ be an (external) $|J|$-ary
poly-operation. Then we can define an (external) $|I|$-ary poly-operation $\sver{H}{\ffi}:\boxtimes_{i\in I}C^*_i\row B^*\circ (\prod^I)$,
where $C^*_i=\times_{\ffi(j)=i}A^*_j$, as the composition:
$$
\boxtimes_{i\in I}(\times_{\ffi(j)=i}A_j^*)(X_i;i\in I)\stackrel{\Delta_{\ffi}^*(H)}{\lrow}B^*(\times_{j\in J}X_{\ffi(j)})
\stackrel{\blacktriangle_{\ffi}^*}{\lrow}B^*(\times_{i\in I}X_i),
$$
where $\Delta_{\ffi}:(\smk)^{\times I}\row (\smk)^{\times J}$ is an obvious poly-diagonal functor, and
$\blacktriangle_{\ffi}: id\row \prod_{\ffi}\circ\Delta_{\ffi}$ is a morphism of functors $(\smk)^{\times I}\row (\smk)^{\times I}$ given by
the poly-diagonal maps $\times_{i\in I}X_i\row\times_{j\in J}X_{\ffi(j)}$.

The (internal) version $\hat{H}$ is nothing else, but $H^{\pi}$, where $\pi:J\row *$ is the projection to a set of cardinality one.
Also, it is easy to see that if $J\stackrel{\ffi}{\row}I\stackrel{\psi}{\row}K$ are composable surjective maps, then
$\sver{(\sver{H}{\ffi})}{\psi}=\sver{H}{(\psi\circ\ffi)}$. In particular,
$\hat{(\sver{H}{\ffi})}=\hat{H}$. \\

\noindent$\bullet$\hspace{2mm}
Let $\chi:I\row J$ be an injective map of finite sets, $H:\boxtimes_{j\in J}A^*_j\row B^*\circ (\prod^J)$ be an (external) $|J|$-ary
poly-operation and, for $j\not\in I$ we fix some smooth quasi-projective varieties $X_j$ and some elements $x_j\in A^*_j(X_j)$,
with $\vec{x}=\{x_j,j\in J\backslash I\}$. Then,
restricting $H$ to this choice of $j$-coordinates, for $j\not\in I$, we get an $|I|$-ary (external) poly-operation
$\res{H}{\chi,\vec{x}}:\boxtimes_{i\in I}A^*_i\row (B_{\chi,\vec{X}})^*\circ(\prod^I)$, where
$(B_{\chi,\vec{X}})^*(Y):=B^*(Y\times(\times_{j\not\in I}X_j))$.
Note, that although the theory $(B_{\chi,\vec{X}})^*$ is (almost always) not "constant" (not to say "of rational type"),
it still satisfies the Definition \ref{goct}.
Taken together for all choices of $\vec{x}$ such restrictions form a collection $\res{H}{\chi}$. These are just "slices" of $H$ along
$I$-coordinates, which carry the same information as $H$ itself.\\

The most well-known example of a poly-operation is given by the {\it multiplication bi-operation}:
$$
A^*\times A^*\stackrel{\cdot}{\lrow} A^*.
$$

Poly-operations naturally appear as "discrete derivatives" of operations:
given an operation $A^*\stackrel{G}{\row}B^*$, we can produce the (external) bi-operation
$$
A^*(X_1)\times A^*(X_2)\stackrel{\De{G}}{\row} B^*(X_1\times X_2)
$$
by the rule:
$\De{G}(x,y)=G(\pi_1^*(x)+\pi_2^*(y))-G(\pi_1^*(x))-G(\pi_2^*(y))$.
Analogously, one obtains the (external) poly-operation:
$\Den{q}{G}:(A^*)^{\boxtimes (q+1)}\row B^*\circ(\prod^{q+1})$, and
the respective (internal) poly-operation
$$
\den{q}{G}:(A^*)^{\times (q+1)}\row B^*.
$$
We have analogues of Propositions \ref{DTE} and \ref{CR} in this situation.

\section{Main result}
\label{sect-MAIN}

The main purpose of this article is to prove the following statement.

\begin{theorem}
\label{MAIN}
Let $A^*$ be a {\it theory of rational type}, and
$B^*$ be any theory in the sense of Definition \ref{goct}.
Then operations
$A^*\stackrel{G}{\row}B^*$ on $\smk$
are in 1-to-1 correspondence with the families of pointed maps
$$
A^*((\pp^{\infty})^{\times l})
\stackrel{G}{\row}B^*((\pp^{\infty})^{\times l}),\,\,\text{for}\,\,l\in\zz_{\geq 0}
$$
commuting with the pull-backs for:
\begin{itemize}
\item[$(i)$ ] the action of ${\frak{S}}_{l}$;
\item[$(ii)$ ] the partial diagonals;
\item[$(iii)$ ] the partial Segre embeddings;
\item[$(iv)$ ] the partial point embeddings;
\item[$(v)$ ] the partial projections.
\end{itemize}
\end{theorem}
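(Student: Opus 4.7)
\medskip

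\noindent\textbf{Proof plan.} The ``only if'' direction is tautological: any operation $G$ of contravariant functors restricts in particular to a transformation on the subcategory $\proj$ of products of infinite projective spaces, and the five types of morphisms in $(i)$--$(v)$ are genuine morphisms in $\smk$. The substantive content is the reconstruction of $G$ on all of $\smk$ from its values on $\proj$. My plan is to deduce Theorem \ref{MAIN} from the poly-operational strengthening (Theorem \ref{MAINpoly} mentioned in the introduction), whose proof proceeds by induction on the dimension $d$ of the source varieties, treating poly-operations of all arities simultaneously.

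The reason the induction must be done for poly-operations of arbitrary arity at once is the Discrete Taylor Expansion (Proposition \ref{DTE}): a non-additive operation $G$ evaluated on a sum $\sum_{J_0\in M_0} x_{J_0}$ unfolds into a sum of values of the iterated derivatives $\Den{|J_1|-1}{G}$ on subsets of the inputs. Since the inductive description of $A^*(X)$ for $X$ of dimension $d$ forces us, via Theorem \ref{HcA}, to reduce classes to sums over strata of a strict normal crossing divisor $V$ on a resolution $\wt X$, every single-operation induction step naturally produces expressions that are linear combinations of $\den{q}{G}$ applied to classes on the faces $V_{I_1}=\cap_{I_0\in I_1}D_{I_0}$, which have dimension $d-|I_1|\leq d-1$. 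Hence one cannot hope to extend $G$ alone to dimension $d$ without also extending all of its derivatives.

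The inductive step itself will be set up as follows. Assume by induction that the entire tower $\{\den{q}{G}\}_{q\geq 0}$ has been extended compatibly to $(\smk)_{\leq d-1}\times\proj$. Given a smooth $X$ of dimension $d$, use Theorem \ref{HcA} to present any $\overline{A}$-class on $X$ as $\rho_*v_*(\gamma)/\rho_*(1)$ for some object $(Z\to X\leftarrow\wt X,\gamma)$ of the resolution category. Using Definition \ref{divclass} and Definition \ref{starpullback}, expand $\gamma$ in the basis given by the face classes $(\hat d_{I_1})_*(\cdot)\cdot F^{l_{I_0};I_0\in L}_{I_1}(\ov\lambda)$; since the $F^{\ldots}_{I_1}$ are polynomials in the first Chern classes $\lambda_{I_0}=c_1^A(\co(D_{I_0}))$, the problem of defining $G$ on such expressions universally reduces to the problem on $\proj$, which classifies collections of line bundles, via pull-backs along classifying maps. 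The Discrete Taylor Expansion converts the sum over faces into a sum over $I_1\subset L$ of values of $\den{|I_1|-1}{G}$, each applied to classes on lower-dimensional faces, where the values are supplied by induction and pulled back via the partial Segre and partial diagonal maps (encoding tensor products of $\co(D_{I_0})$ and coincidences), while partial projections and partial point embeddings handle the remaining line-bundle bookkeeping; the symmetric group relations ensure independence of the ordering of components.

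The main obstacles are then threefold, and each is forced to be resolved by the bi-complex $\frc$ of Subsection \ref{c}. First, well-definedness with respect to the choice of resolution $(Z,\wt X,\rho)$ is controlled by the differential $d_{1,0}$: the compatibility of the extension under morphisms of type I (blow-ups) and type II (tightening $Z$) is precisely what Proposition \ref{MPEIF} and Proposition \ref{excess}, combined with Proposition \ref{vvter}, convert into a statement pulled back from $\proj$. Second, homotopy invariance (independence of the representative class modulo $\pp^1$-homotopy) is controlled by the differential $d_{0,1}$ and reduces to the $(i^{\star}_0-i^{\star}_1)$-identity on the divisor $W\cap\wt X_l$; this is again a universal identity in the category $\proj$. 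Third, and most delicate: one must show that $d^{\star}$ and $\ov f^{\,\star}$ intertwine with the proposed extension of each $\den{q}{G}$, and this is where the hypothesis that the transformation on $\proj$ commutes with the five listed pull-backs is consumed essentially. The compatibility with partial Segre embeddings captures the multiplicativity property of the formal group law expansions $F^{l_{I_0}}_{I_1}$, and compatibility with partial diagonals captures what happens when several $D_{I_0}$ are pulled back to a common component. The crux of the argument is that these five compatibilities on $\proj$ \emph{exhaust} the relations needed to make the inductive extension of the full poly-operational tower consistent, so that setting $l=0$ and extracting the single-operation case yields Theorem \ref{MAIN}.
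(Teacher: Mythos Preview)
Your plan is structurally on target and mirrors the paper's architecture: deduce Theorem \ref{MAIN} from the poly-operational Theorem \ref{MAINpoly}, run the induction on $d$ simultaneously for all arities, use Theorem \ref{HcA} plus the Discrete Taylor expansion to reduce the extension to computations on faces of strict normal crossing divisors, and discharge well-definedness against the differentials $d_{1,0}$ and $d_{0,1}$ of the bi-complex $\frc$.

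There is, however, one genuine gap. Your inductive hypothesis is only that the tower $\{\den{q}{G}\}$ has been ``extended compatibly to $(\smk)_{\leq d-1}\times\proj$'', by which you evidently mean commutation with pull-backs (condition $(b_i)$ in the paper's language). The paper carries a strictly stronger datum through the induction: a Riemann--Roch--type condition $(b_{ii})$ prescribing, for every regular embedding $X\stackrel{g}{\hookrightarrow}Y$ with $\ddim Y\leq d$, an explicit residue formula expressing $G_Y(g_*\alpha)$ in terms of $G_X(\prod_i x_i^A\cdot\alpha)$ specialized at $x_i^B=t+_B\mu_i^B$. This is not decoration: the very \emph{definition} of the extension $\wtO{G}(\gamma\dagger D)$ at the next level (equation (\ref{Gtil})) is nothing but $(b_{ii})$ applied term-by-term along the Taylor expansion to the smooth face embeddings $d_{J_0}$, and without $(b_{ii})$ in the hypothesis you have no candidate formula to write down. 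Your third paragraph instead tries to \emph{pull back} along classifying maps to $\proj$, but the classifying maps raise dimension, so this does not stay inside $(\smk)_{\leq d-1}$; the residue mechanism is what lets the paper stay on the faces. Equally, uniqueness rests on the converse: any operation on $\smk$ already satisfies $(b_{ii})$ (Proposition \ref{RR}, via deformation to the normal cone), which forces the inductive extension. The compatibility of the extension with $d^{\star}$ and $\ov f^{\,\star}$ that you list as your ``third obstacle'' is indeed needed (it is Proposition \ref{central}), but it is a consequence derived from $(b_{ii})$ and Proposition \ref{specializ}, not the organizing principle itself.
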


The topological analogue of this result was obtained by T.Kashiwabara in
\cite[Theorem 4.2]{Kash}. The additive algebro-geometric case was done in \cite[Theorem 5.1]{SU}.

Denote as $\proj$ the category with objects: $(\pp^{\infty})^{\times l}$, for $l\in\zz_{\geq 0}$,
and morphisms generated by: the action of the symmetric group, partial diagonals, partial projections, partial point embeddings, and partial Segre embeddings.
Then the Theorem claims that operations on $\smk$ are in one-to-one correspondence with those on $\proj$.

We will prove a poly-operational version (which, in reality, is equivalent).

\begin{theorem}
\label{MAINpoly}
Let $r$ be a natural number, $A_i^*;i\in\ov{r}$ be {\it theories of rational type}, and
$B^*$ be any theory in the sense of Definition \ref{goct}.
Then $r$-nary (external) poly-operations
$\boxtimes_{i=1}^r(A_i^*)\stackrel{H}{\row}B^*\circ(\prod^r)$ on $\smk^{\times r}$
are in 1-to-1 correspondence with the $r$-nary (external) poly-pointed poly-transformations
$$
\times_{i=1}^r A_i^*((\pp^{\infty})^{\times l_i})
\stackrel{H}{\row}B^*(\times_{i=1}^r(\pp^{\infty})^{\times l_i}),\,\,\text{for}\,\,l_i\in\zz_{\geq 0}
$$
commuting with the pull-backs for:
\begin{itemize}
\item[$(i)$ ] the action of $\times_{i=1}^r{\frak{S}}_{l_i}$;
\item[$(ii)$ ] the partial diagonals (for each $i$);
\item[$(iii)$ ] the partial Segre embeddings (for each $i$);
\item[$(iv)$ ] the partial point embeddings (for each $i$);
\item[$(v)$ ] the partial projections (for each $i$).
\end{itemize}
\end{theorem}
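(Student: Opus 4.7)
The plan has two directions. Producing a poly-transformation on products of $(\pp^\infty)^{\times l_i}$ from a poly-operation $H$ on $\smk^{\times r}$ is immediate, since the morphisms in (i)--(v) are honest maps of $k$-ind-schemes and Definition \ref{poly-INEXop} already requires commutation with pull-backs along all such maps. The content lies in the reconstruction, which I would carry out by induction on $d=\max_i\dim X_i$, treating poly-operations of \emph{all} arities simultaneously (since, as noted in the introduction, the induction step forces higher arities even when one starts with a single operation). The base case of products of finite-dimensional projective spaces reduces, by a Milnor-type limit argument together with the partial point embeddings $\pp^{n_i}\hookrightarrow\pp^\infty$ from class (iv), to the data given on $(\pp^\infty)^{\times l_i}$ directly.

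For the inductive step from $\smk_{\leq d-1}^{\times r}$ to $\smk_{\leq d}^{\times r}$, I would use Theorem \ref{HcA} to present $\ov{A}_i^*(X_i)$ as the zeroth cohomology of $\frc(A_i^*)$. Given $x_i\in A_i^*(X_i)$, split $x_i=x_i^{\op{const}}+\ov{x}_i$ and realize $\ov{x}_i$ via a resolution datum $(V_i\stackrel{v_i}{\to}\wt{X}_i\stackrel{\rho_i}{\to}X_i,\gamma_i)$ with $V_i=\sum_{I_{0,i}\in L_i} l_{i,I_{0,i}}D_{i,I_{0,i}}$ a strict normal crossing divisor on the blow-up. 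Expanding $\gamma_i=\sum(\hat d_{i,I_{0,i}})_*(\gamma_{i,I_{0,i}})$ as in Definition \ref{divclass} and applying the Discrete Taylor Expansion (Proposition \ref{DTE}) in each of the $r$ slots rewrites $H(x_1,\ldots,x_r)$ as a sum of terms $(\Den{|J_1|-1}{H})$ evaluated on face classes $(\hat d_{i,I_{1,i}})_*(\gamma_{i,I_{1,i}})$ together with the constant pieces. Each face $D_{i,I_{1,i}}\subset\wt{X}_i$ has dimension strictly less than $\dim X_i$, so its contribution is prescribed by the inductive hypothesis applied to a poly-operation of possibly higher arity; this is precisely why all arities must be carried through together.

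The main obstacle is well-definedness of this candidate, i.e.\ independence of the chosen resolution data. By Theorem \ref{HcA} this reduces to verifying vanishing on the three types of differentials in $\frc$: single permitted blow-ups over $V_1$, refinements of the closed subscheme $Z$, and $\pp^1$-homotopies. For each relation I would combine the Discrete Chain Rule (Proposition \ref{CR}) applied to the Taylor expansion of $H$ with the combinatorial identities supplied by the multiple-points excess intersection formula (Proposition \ref{MPEIF}), the ordinary excess intersection formula (Proposition \ref{excess}), and the blow-up exact sequences of Proposition \ref{vvter}. The lifting criterion for poly-maps modulo a subgroup --- vanishing of partial discrete derivatives --- replaces the linearity arguments used in \cite{SU}, and is what makes the non-additive case accessible. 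The five classes (i)--(v) feed in at exactly this step: partial diagonals supply the self-intersection relations $c_1^A(\co(D_{I_0}))$ needed to recognize the polynomials $F^{l_{I_0};I_0\in L}_{I_1}$ of Definition \ref{divclass}, partial Segre embeddings encode the formal group law expression $\sum^A[l_{I_0}]\cdot_A\lambda_{I_0}$, partial point embeddings handle the splitting $A^*=A\oplus\ov{A}^*$, and the partial projections together with the $\frak{S}_{l_i}$-actions implement the symmetries inherent in the Taylor expansion. Once well-definedness is established, commutation of $H$ with an arbitrary pull-back follows by factoring the morphism (via Hironaka's theorem) into ones controlled by (i)--(v) and by the induction; uniqueness is automatic, since the Discrete Taylor Expansion together with Theorem \ref{HcA} shows that any two extensions agreeing on products of $\pp^\infty$ must coincide.
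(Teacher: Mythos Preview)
Your outline captures the broad shape of the argument---induction on dimension, use of Theorem \ref{HcA}, the Discrete Taylor expansion to decompose into face contributions, and checking vanishing on the differentials $d_{1,0}$, $d_{0,1}$---and this is indeed the paper's approach. However, there is a genuine gap in the inductive hypothesis you are carrying.

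You propose to induct on $(\smk_{\leq d})^{\times r}$. The paper instead inducts on $((\smk)_{\leq d})^{\times r}\times\proj^{\,r}$ and, crucially, carries through the induction not merely the statement ``$H$ is defined and commutes with pull-backs'' but also a Riemann--Roch type push-forward condition $(b_{ii})$ (Definition \ref{Gd}). Without $(b_{ii})$ at level $d-1$, your key sentence ``each face $D_{i,I_{1,i}}$ has dimension strictly less than $\dim X_i$, so its contribution is prescribed by the inductive hypothesis'' does not actually prescribe anything: the element you need to evaluate is $(d_{J_1})_*(\gamma_{J_1})\in A^*(\wt{X})$, which lives on a $d$-dimensional variety, and the inductive hypothesis alone gives no way to compute $G$ on a push-forward from a smaller variety. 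The paper's residue formula (\ref{Gtil}) for $\wtO{G}(\gamma\dagger D)$ is \emph{forced} by $(b_{ii})$, and the verification that this formula agrees with the previously defined $G$ on varieties of dimension $\leq d-1$ (equation (\ref{wtO})) uses $(b_{ii})$ for the derivatives $\Den{|J_1|-1}{G}$. The extra $\proj$-factor is what provides the free variables $\ov{z}^A,\ov{z}^B$ needed to even state $(b_{ii})$ and to run Proposition \ref{specializ}. One must then also \emph{re-establish} $(b_{ii})$ at level $d$ (end of the proof of Proposition \ref{Gd-1Gd}), via a blow-up reduction; this step is absent from your sketch.

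A second, smaller gap: uniqueness is not quite automatic. One needs to show that \emph{any} operation on $\smk$ already satisfies $(b_{ii})$---this is Proposition \ref{RR}, proved via deformation to the normal cone---so that the formulas (\ref{Gtil}) and (\ref{defGX}) are forced. Finally, the paper's reduction to mono-operations (Subsection \ref{red})---freezing all but one slot and viewing the result as an operation into the modified theory $\wt{B}^*(Y)=B^*(Y\times\prod_{j\neq i}X_j)$---is the mechanism by which the poly-case is handled; your phrase ``a poly-operation of possibly higher arity'' points in the right direction but does not quite name this device.
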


Similar results hold for "non-pointed" operations and poly-operations (follows from a "pointed" version by an obvious induction on $r$).

\subsection{Transformations on products of projective spaces}
\label{Tpp}

Let
$$
A((\pp^{\infty})^{\times l})
\stackrel{G}{\row}B((\pp^{\infty})^{\times l}),\,\,\text{for}\,\,l\in\zz_{\geq 0}
$$
be any family of maps satisfying $(i)-(v)$ from Theorem \ref{MAIN}.
Denote: $A[[\ov{z}^A_{\{l\}}]]:=A[[z_1^A,\ldots,z_l^A]]$, and
$B[[\ov{z}^B_{\{l\}}]]:=B[[z_1^B,\ldots,z_l^B]]$. Identifying $C((\pp^{\infty})^{\times l})$
with $C[[\ov{z}^C_{\{l\}}]]$, we get a map
$$
G_{\{l\}}:A[[\ov{z}^A_{\{l\}}]]\row B[[\ov{z}^B_{\{l\}}]].
$$
Our conditions
can be interpreted as follows: for any $\alpha(\ov{z}^A_{\{l\}})\in A[[\ov{z}^A_{\{l\}}]]$,
\begin{itemize}
\item[$(i)$ ] $G_{\{l\}}$ is symmetric w.r.to ${\frak{S}}_l$ (with the diagonal action on
two sets of variables);
\item[$(ii)$ ] $G_{\{l\}}(\alpha(\ov{z}^A_{\{l\}}))(\ov{z}^B_{\{l-1\}},z^B_{l-1})=
G_{\{l-1\}}(\alpha(\ov{z}^A_{\{l-1\}},z^A_{l-1}))(\ov{z}^B_{\{l-1\}})$;
\item[$(iii)$ ]
$G_{\{l\}}(\alpha(\ov{z}^A_{\{l\}}))(\ov{z}^B_{\{l-1\}},z^B_l+_B z^B_{l+1})=
G_{\{l+1\}}(\alpha(\ov{z}^A_{\{l-1\}},z^A_l+_A z^A_{l+1}))(\ov{z}^B_{\{l+1\}})$;
\item[$(iv)$ ] $G_{\{l\}}(\alpha(\ov{z}^A_{\{l\}}))(\ov{z}^B_{\{l-1\}},0)=
G_{\{l-1\}}(\alpha(\ov{z}^A_{\{l-1\}},0))(\ov{z}^B_{\{l-1\}})$;
\item[$(v)$ ] $G_{\{l\}}(\alpha(\ov{z}^A_{\{l\}}))(\ov{z}^B_{\{l\}})=
G_{\{l+1\}}(\alpha(\ov{z}^A_{\{l\}}))(\ov{z}^B_{\{l+1\}})$.
\end{itemize}
Using the last property, we can combine all $G_{\{l\}}$'s together.
Consider $A[[\ov{z}^A]]=\cup_lA[[\ov{z}^A_{\{l\}}]]$ and
$B[[\ov{z}^B]]=\cup_lB[[\ov{z}^B_{\{l\}}]]$.
On these rings we have an action of ${\frak{S}}_{\infty}=\cup_l {\frak{S}}_l$,
where the group ${\frak{S}}_l$ acts on the first $l$ variables.
Denote as $\ov{z}^A_{+r}$ the variables $z^A_{r+1},z^A_{r+2},\ldots$ (and similar for $B$).
Denote as $\Hom_{Filt}(A[[\ov{z}^A]], B[[\ov{z}^B]])$ the set of maps respecting the
above filtration. Then $G_{\{l\}}$'s give rise to the
$$
G\in\Hom_{Filt}(A[[\ov{z}^A]], B[[\ov{z}^B]])
$$
satisfying the following:
For any $\alpha(\ov{z}^A)\in A[[\ov{z}^A]]$,
\begin{itemize}
\item[$(a_i)$ ] $G$ is symmetric w. r. to ${\frak{S}}_{\infty}$;
\item[$(a_{ii})$ ] $G(\alpha(\ov{z}^A))(0,\ov{z}^B_{+1})=G(\alpha(0,\ov{z}^A_{+1}))(\ov{z}^B)$;
\item[$(a_{iii})$ ] $G(\alpha(\ov{z}^A_{+1}))(z^B_1+_Bz^B_2,z^B_3,\ldots)=
G(\alpha(z^A_1+_Az^A_2,z^A_3,\ldots))(\ov{z}^B)$;
\item[$(a_{iv})$ ]
$G(\alpha(\ov{z}^A))(z^B_2,\ov{z}^B_{+1})=G(\alpha(z^A_2,\ov{z}^A_{+1}))(\ov{z}^B_{+1})$.
\end{itemize}

Thus, we have identified the set of transformations on $\proj$ with the set of $G$'s as above.

From symmetry it follows that it does not really matter how we call particular variables,
so sometimes we will use different letters to denote some of them. The important thing though
is to keep parity between $A$ and $B$-coordinates, like: $x^A$ - $x^B$, $y^A$ - $y^B$.

Our map $G$ appears to be continuous, in some sense.
It follows from $(a_{ii}),(a_{i})$, and $(a_{iv})$ that, for any monomial ideal
$\la(\ov{z}^A)^{\vec{d}}\ra$, we have:
\begin{equation}
\label{conv}
G(\la(\ov{z}^A)^{\vec{d}}\ra)\subset \la(\ov{z}^B)^{\vec{d}}\ra.
\end{equation}

In a similar fashion, an $r$-nary (external) poly-transformation $H$ on $\proj^{\times r}$
is given by
$$
H\in\Hom_{Filt}(\times_{i=1}^r A_i[[\ov{z}(i)^{A_i}]], B[[\ov{z}(i)^B|_{i\in\ov{r}}]])
$$
satisfying the following:
For any $\alpha_i(\ov{z}(i)^{A_i})\in A_i[[\ov{z}(i)^{A_i}]]$; $i\in\ov{r}$,
\begin{itemize}
\item[$(a_i)$ ] $H$ is symmetric w. r. to $\times_{i=1}^r{\frak{S}}_{\infty}$;
\item[] and, for each $i$,
\item[$(a_{ii})$ ]
\begin{equation*}
\begin{split}
&H\left(\alpha_i(\ov{z}(i)^{A_i}),\alpha_j(\ov{z}(j)^{A_j})|_{j\neq i}\right)
\left(0,\ov{z}(i)^B_{+1},\ov{z}(j)^B|_{j\neq i}\right)=\\
&H\left(\alpha_i(0,\ov{z}(i)^{A_i}_{+1}),\alpha_j(\ov{z}(j)^{A_j})|_{j\neq i}\right)
\left(\ov{z}(i)^B,\ov{z}(j)^B|_{j\neq i}\right);
\end{split}
\end{equation*}
\item[$(a_{iii})$ ]
\begin{equation*}
\begin{split}
&H\left(\alpha_i(\ov{z}(i)^{A_i}_{+1}),\alpha_j(\ov{z}(j)^{A_j})|_{j\neq i}\right)
\left(z(i)^B_1+_Bz(i)^B_2,z(i)^B_3,\ldots,\ov{z}(j)^B|_{j\neq i}\right)=\\
&H\left(\alpha_i\big(z(i)^{A_i}_1+_{A_i}z(i)^{A_i}_2,z(i)^{A_i}_3,\ldots\big),
\alpha_j(\ov{z}(j)^{A_j})|_{j\neq i}\right)
\left(\ov{z}(i)^B,\ov{z}(j)^B|_{j\neq i}\right);
\end{split}
\end{equation*}
\item[$(a_{iv})$ ]
\begin{equation*}
\begin{split}
&H\left(\alpha_i(\ov{z}(i)^{A_i}),\alpha_j(\ov{z}(j)^{A_j})|_{j\neq i}\right)
\left(z(i)^B_2,\ov{z}(i)^B_{+1},\ov{z}(j)^B|_{j\neq i}\right)=\\
&H\left(\alpha_i(z(i)^{A_i}_2,\ov{z}(i)^{A_i}_{+1}),\alpha_j(\ov{z}(j)^{A_j})|_{j\neq i}\right)
\left(\ov{z}(i)^B_{+1},\ov{z}(j)^B|_{j\neq i}\right).
\end{split}
\end{equation*}
\end{itemize}

In particular, it again follows from $(a_{i}),(a_{ii})$, and $(a_{iv})$ that since $H\left(0,\alpha_j(\ov{z}(j)^{A_j})|_{j\neq i}\right)=0$,
for all $i$, then,
for any choice of monomials $(\ov{z}(i)^{A_i})^{\vec{d(i)}};i\in\ov{r}$,
\begin{equation}
\label{convEXpoly}
H(\la(\ov{z}(i)^{A_i})^{\vec{d(i)}}\ra|_{i\in\ov{r}})\subset
\la\prod_{i=1}^r(\ov{z}(i)^B)^{\vec{d(i)}}\ra.
\end{equation}
And, for the respective (internal) poly-operation 
$\hat{H}:A^*=\times_iA_i^*\row B^*$, one has:
\begin{equation}
\label{convINpoly}
\hat{H}(\la(\ov{z}^{A})^{\vec{d(i)}}\ra|_{i\in\ov{r}})\subset
\la(\ov{z}^B)^{\sum_{i=1}^r\vec{d(i)}}\ra.
\end{equation}
In other words, if each $i$-coordinate of the power series $\alpha(\ov{z}^A)$ is divisible by $\left(\ov{z}^A\right)^{\vec{d(i)}}$ then $\hat{H}(\alpha(\ov{z}^A))$ is divisible by 
$\left(\ov{z}^B\right)^{\sum_i\vec{d(i)}}$.

Let $A^*\stackrel{G}{\row}B^*$ be a transformation on $\proj$ such that $G(0)=0$.
Then $\De{G}:A^*\boxtimes A^*\row B^*\circ \left(\prod^r\right)$ is an (external) bi-transformation
on $\proj^2$ such that $\De{G}(0,*)=\De{G}(*,0)=0$. 
Since, for a fixed $x$, $\de{G}(x,*)$ is again a pointed transformation on $\proj$, we obtain from (\ref{conv}) that if 
$y\in\la(\ov{z}^A)^{\vec{d}}\ra$,
then $\de{G}(x,y)\in\la(\ov{z}^B)^{\vec{d}}\ra$.
Since $G(x+y)=G(x)+G(y)+\de{G}(x,y)$, we get:

\begin{proposition}
\label{convergence}
For any transformation $A^*\stackrel{G}{\row}B^*$ on $\proj$ such that $G(0)=0$,
$$
\{x\equiv x'\,\,\,(\,mod\,\la(\ov{z}^A)^{\vec{d}}\ra)\}\hspace{3mm}\Rightarrow\hspace{3mm}
\{G(x)\equiv G(x')\,\,\,(\,mod\,\la(\ov{z}^B)^{\vec{d}}\ra)\}.
$$
\end{proposition}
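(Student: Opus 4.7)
The plan is to reduce the statement to the two convergence estimates already established, namely (\ref{conv}) for operations and (\ref{convEXpoly})--(\ref{convINpoly}) for poly-operations whose restrictions to the coordinate axes vanish. First I would set $y := x' - x$, so that $y \in \la(\ov{z}^A)^{\ov{d}}\ra$, and use the two-term Discrete Taylor Expansion (Proposition \ref{DTE}) to write
$$
G(x') = G(x+y) = G(x) + G(y) + \de{G}(x,y),
$$
where $\de{G}$ denotes the internal bi-operation associated to the external bi-transformation $\De{G}$ on $\proj^{\times 2}$. The goal then reduces to showing that both $G(y)$ and $\de{G}(x,y)$ lie in $\la(\ov{z}^B)^{\ov{d}}\ra$.

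For the first term, since by hypothesis $G(0)=0$, the continuity estimate (\ref{conv}) applies and gives $G(y) \in \la(\ov{z}^B)^{\ov{d}}\ra$ directly from $y \in \la(\ov{z}^A)^{\ov{d}}\ra$. For the second term, I would observe that $\De{G}$ is an (external) bi-transformation, and that its restrictions to the coordinate axes vanish: indeed, using $G(0)=0$,
$$
\De{G}(0,a) = G(a) - G(0) - G(a) = 0,
$$
and symmetrically $\De{G}(a,0)=0$. This is precisely the hypothesis under which (\ref{convEXpoly}) applies to $H=\De{G}$ with $r=2$. Taking the multi-index $\ov{d(1)}=\ov{0}$ and $\ov{d(2)}=\ov{d}$, (\ref{convEXpoly}) yields $\De{G}(x,y) \in \la(\ov{z}^B(2))^{\ov{d}}\ra$, and passing to the internal bi-operation via the diagonal restriction (equivalently, applying (\ref{convINpoly}) directly) gives $\de{G}(x,y) \in \la(\ov{z}^B)^{\ov{d}}\ra$.

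Combining the three observations shows $G(x') - G(x) = G(y) + \de{G}(x,y) \in \la(\ov{z}^B)^{\ov{d}}\ra$, which is the desired congruence. I do not anticipate any serious obstacle here: the proposition is essentially a reformulation of the content already packaged into the two convergence statements preceding it, with the Discrete Taylor Expansion serving only as the algebraic bridge between the additive and the non-additive cases. The whole step-by-step verification should fit in a few lines once the appropriate hypotheses on $\De{G}$ are spelled out.
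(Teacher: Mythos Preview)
Your proposal is correct and follows exactly the approach the paper takes: the proof in the paper is precisely the paragraph immediately preceding the proposition, which writes $G(x+y)=G(x)+G(y)+\de{G}(x,y)$, observes that $\De{G}(0,*)=\De{G}(*,0)=0$ so that (\ref{convINpoly}) handles $\de{G}(x,y)$, and implicitly invokes (\ref{conv}) for $G(y)$. There is nothing to add.
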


In other words, $G$ is continuous in the topology given by the monomial ideals, and we can
approximate $G(x)$ by approximating $x$.
Similar result is valid for poly-operations.

For
$\alpha(\ov{z}^A)=\sum_{\vec{d}\in M}\alpha_{\vec{d}}\cdot(\ov{z}^A)^{\vec{d}}$, as in
Proposition \ref{DTE}, we have:
\begin{equation}
\label{Tmon}
G(\alpha(\ov{z}^A))=\sum_{I\subset M}(\den{|I|-1}G)
\left(\alpha_{\vec{d}}\cdot(\ov{z}^A)^{\vec{d}}|_{\vec{d}\in I}\right),
\end{equation}
where the sum is taken over all finite subsets of the set of all monomials $M$.
Despite the fact that $M$ is infinite now, it follows from (\ref{convINpoly}) that the sum converges.

In the same way, we can expand an (external) poly-operation along each variable.

Below we will need also the difference variant of $(a_{iii})$:
\begin{itemize}
\item[$(a_{iii'})$ ] $G(\alpha(\ov{z}^A_{+1}))(z^B_1-_Bz^B_2,z^B_3,\ldots)=
G(\alpha(z^A_1-_Az^A_2,z^A_3,\ldots))(\ov{z}^B)$.
\end{itemize}
This clearly follows from $(a_{iii})$ and the identity for the formal inverse:
\begin{equation}
\label{inverse}
G(\alpha(-_Az^A_1,\ov{z}^A_{+1}))(\ov{z}^B)=G(\alpha(\ov{z}^A))(-_Bz^B_1,\ov{z}^B_{+1}).
\end{equation}
Let us prove that the latter property follows from $(a_{i-iv})$.
It is an analogue of the fact that any group homomorphism respects the inverse map.
We just translate the classical proof of it to the language of Hopf algebras.
First, we observe that if $\gamma(\ov{z}^A)$ is some power series and $\beta(x^A,y^A,\ov{z}^A_{+1}):=\gamma(x^A+_Ay^A,\ov{z}^A_{+1})$,
then by $(a_{iii})$ and $(a_{ii})$,
\begin{equation}
\begin{split}
\label{x-minus-x}
&G(\beta(x^A,y^A,\ov{z}^A_{+1}))(x^B=t^B,y^B=-_Bt^B,\ov{z}^B_{+1})=G(\gamma(x^A+_Ay^A,\ov{z}^A_{+1}))(x^B=t^B,y^B=-_Bt^B,\ov{z}^B_{+1})=\\
&G(\gamma(\ov{z}^A))(z^B_1=t^B-_Bt^B,\ov{z}^B_{+1})=G(\gamma(\ov{z}^A))(0,\ov{z}^B_{+1})=
G(\gamma(0,\ov{z}^A_{+1}))(\ov{z}^B)=G(\beta(t^A,-_At^A,\ov{z}^A_{+1}))(t^B,\ov{z}^B_{+1}),
\end{split}
\end{equation}
where the last expression does not really depend on $t^B$ (from the last identity).
Consider now the expression:
$$
G(\alpha(-_Au^A+_Av^A+_Aw^A,\ov{z}^A_{+1}))(u^B=s^B,v^B=s^B,w^B=-_Bs^B,\ov{z}^B_{+1}).
$$
Then, due to $(a_{iv})$
applied to $u,v$ variables, it is equal to:
$$
G(\alpha(-_As^A+_As^A+_Aw^A,\ov{z}^A_{+1}))(s^B,w^B=-_Bs^B,\ov{z}^B_{+1})=G(\alpha(w^A,\ov{z}^A_{+1}))(w^B=-_Bs^B,\ov{z}^B_{+1}).
$$
On the other hand, by $(\ref{x-minus-x})$ applied to $v,w$ variables
(with $\gamma(z^A_1,u^A,\ov{z}^A_{+1}):=\alpha(-_Au^A+_Az^A_1,\ov{z}^A_{+1})$), it is equal to:
$G(\alpha(-_Au^A+_As^A-_As^A,\ov{z}^A_{+1}))(u^B=s^B,s^B,\ov{z}^B_{+1})=G(\alpha(-_Au^A,\ov{z}^A_{+1}))(u^B=s^B,\ov{z}^B_{+1})$.
Thus, we have proven $(\ref{inverse})$ and $(a_{iii'})$.\\

\bigskip

Observe that all the considerations of this subsection apply to any operation $G:A^*\row B^*$ between any two theories in the sense of Panin-Smirnov \cite[Definition 1.1.7]{P-RR} or in the sense of Levine-Morel \cite[Definition 1.1.2]{LM} over an arbitrary field.

\subsection{Inductive assumption}
\label{uslovie-Gd}

\begin{definition}
\label{GX}
Suppose, $X$ is a smooth quasi-projective variety. Let us say that $G_X$ is defined if
we are given $G_X\in\Hom_{Filt}(A^*(X)[[\ov{z}^A]], B^*(X)[[\ov{z}^B]])$ satisfying
$(a_{i}),(a_{ii}),(a_{iii}),(a_{iv})$. And similar for poly-operations.
\end{definition}

Denote as $(\smk)_{\leq d}$ the full subcategory of $\smk$ consisting of varieties of
dimension $\leq d$.
For different varieties, the $G_X$-structures should interact.
To start with, altogether they should provide an operation on
$(\smk)_{\leq d}\times\proj$. But, it appears, that one needs to carry along
some Riemann-Roch type condition as well.

\begin{definition}
\label{Gd}
Let $d$ be a non-negative integer.
Let us say that a compatible family $\Gd{d}$ of dimension $\leq d$ is defined, if for all $X$ of dimension $\leq d$, $G_X$ is defined
and it satisfies:
\begin{itemize}
\item[$(b_i)$ ] For any $X\stackrel{f}{\row}Y$ (with $\ddim(X),\ddim(Y)\leq d$),
$$
G_X(f^*\alpha(\ov{z}^A))=f^*G_Y(\alpha(\ov{z}^A));
$$
\item[$(b_{ii})$ ] For any regular embedding $X\stackrel{g}{\row}Y$ (with $\ddim(X),\ddim(Y)\leq d$),
with the $B$-Chern roots $\mu_1^B,\ldots,\mu_n^B$ of the normal bundle $N_g$,
$$
G_Y(g_*\alpha(\ov{z}^A))(\ov{z}^B)=
g_*\operatornamewithlimits{Res}_{t=0}
\frac{G_X(\prod_{i=1}^n x^A_i\alpha(\ov{z}^A))(x^B_i=t+_B\mu_i^B|_{i\in\ov{n}},\ov{z}^B)
\cdot\omega_t^B}
{\prod_{i=1}^n(t+_B\mu_i^B)\cdot t}
$$
\end{itemize}
The condition $\Hd{d}$ for an $r$-nary external poly-operation is an obvious extension of
the above definition (with separate conditions $(b_i)$, $(b_{ii})$ for each variable).
\end{definition}
(Here $\omega_t^B$ is the 1-form invariant w.r.to the formal group law of $B^*$; due to (\ref{conv}) it may be replaced by $dt$.)

Conditions $(a_{i-iv})$ and $(b_i)$ mean exactly that $\Gd{d}$, respectively $\Hd{d}$, defines a morphism of functors on
$(\smk)_{\leq d}\times\proj$, respectively $((\smk)_{\leq d}\times\proj)^r$.\\

We will consider 4 types of manipulations with the $\Hd{d}$-data:
\begin{itemize}
\item[$(1)$] For an $r$-ary poly-collection $\Hd{d}:\boxtimes_{i\in\ov{r}}A^*_i\row B^*\circ({\prod}^r)$ and morphisms
$\Phi_i:C_i^*\row A^*_i$ of theories of rational type with $\Phi=\boxtimes_{i\in\ov{r}}\Phi_i$,
and a pre-morphism $\Psi:B^*\row (B')^*$ of theories (Definition \ref{pre-mor}),
we have the composition:
$$
\Psi\circ(\Hd{d})\circ\Phi:\boxtimes_{i\in\ov{r}}C^*_i\row (B')^*\circ({\prod}^r).
$$
\item[$(2)$] For a $|J|$-ary poly-collection $\Hd{d}$ and a surjective morphism $\ffi:J\row I$ of finite sets, we have the {\it partial
internalization} $(\Hd{d})^{\ffi}$ which is an $|I|$-ary poly-collection.
\item[$(3)$] For a $|J|$-ary poly-collection $\Hd{d}$, natural $m$ and $i\in J$, we have the
partial derivative $\depn{i}{m-1}{(\Hd{d})}$.
\item[$(4)$] For a $|J|$-ary poly-collection $\Hd{d}:\boxtimes_{j\in J}A^*_j\row B^*\circ({\prod}^J)$, a subset
$J'\stackrel{\chi}{\row} J$ and a
choice of (smooth quasi-projective) varieties $X_i$ of dimension $\leq d$ and elements
$x_i\in A^*_i(X_i)$,  for $i\in J\backslash J'$, we have the
restriction $\res{(\Hd{d})}{\chi,\vec{x}}$, which is a $|J'|$-ary poly-collection.
\end{itemize}

\begin{proposition}
\label{copat-compat123}
The above transformations map compatible family to a compatible family.
\end{proposition}

\begin{proof}
(1)
Clearly, $\Psi\circ(\Hd{d})\circ\Phi$ will satisfy $(a_{i-iv})$, where for $(a_{iii})$ we use the fact that
$\Phi_i$ being morphisms of theories, respect formal group laws, and the same holds for $\Psi$ by (\ref{pre-resp-FGL}).
Since $\Phi_i$ and $\Psi$ commute with push-forwards and pull-backs and respect formal group law,
it will also satisfy $(b_{i-ii})$.

(4) This is obvious as conditions for different coordinates are independent.

(2) Since $(H^{\ffi})^{\psi}=H^{\psi\circ\ffi}$, it is sufficient to consider the case, where $\ffi$ "collapses" some subset
$I\stackrel{\chi}{\row}J$
to a point, leaving everything else intact. Considering the "restriction" $H_{\chi}$ of $H$
to this subset and using (4), we reduce to the case of a "complete internalization" $\hat{H}$.

So, let $\Hd{d}:\boxtimes_{j\in\ov{r}}A^*_j\row B^*\circ(\prod^r)$ be an $r$-ary (external) poly-collection, and $\hHd{d}$
be the respective (internal) poly-collection, which is just an operation from a theory
$A^*:=\times_{j\in\ov{r}}A^*_j$ to $B^*$ given by:
$\hat{H}_X(\alpha_j(\ov{z}^{A})|_{j\in\ov{r}})(\ov{z}^B)=
\Delta_X^*H_{X;j\in\ov{r}}(\alpha_j(\ov{z}(j)^{A_j})|_{j\in\ov{r}})(\ov{z}(j)^B=\ov{z}^B|_{j\in\ov{r}})$,
where $\Delta_X:X\row X^r$ is the diagonal map.

Clearly, if $\Hd{d}$ satisfies the conditions $(a_{i-iv})$, then $\hHd{d}$ does it too. The same holds for $(b_i)$ and $(b_{ii})$.
Indeed, for any morphism $g:X\row Y$ of smooth quasi-projective varieties of dimension $\leq d$
we have a cartesian square
$$
\xymatrix{
X^r \ar@{->}[r]^{g^r} & Y^r\\
X \ar@{->}[u]^{\Delta_X} \ar@{->}[r]^{g} & Y \ar@{->}[u]_(0.5){\Delta_{Y}}
}.
$$
Thus, the condition $(b_i)$ for $\Hd{d}$ (in each variable) implies the one for $\hat{(\Hd{d})}$
(just by commutativity of the above square).
If $g$ is a regular embedding as in $(b_{ii})$, then $\Delta_X^*(N_{g^r})/N_g=(N_g)^{r-1}$, and by the Excess Intersection Formula
(Proposition \ref{excess}), $\Delta_Y^*g^r_*(u)=g_*((\prod_{i\in\ov{n}}\mu_i)^{r-1}\cdot\Delta_X^*(u))$, and so,
$\hat{H}_Y(g_*(\alpha_j(\ov{z}^{A}))|_{j\in\ov{r}})(\ov{z}^B)$ is equal to:
\begin{equation*}
\begin{split}
&\Delta_Y^*g^r_*
\operatornamewithlimits{Res}_{t=0}
\frac{H_{X;j\in\ov{r}}(\prod_{i=1}^n x(j)^{A_j}_i\alpha_j(\ov{z}(j)^{A_j})|_{j\in\ov{r}})
(x(j)^B_i=t+_B\mu(j)_i^B|_{i\in\ov{n}},\ov{z}(j)^B=\ov{z}^B|_{j\in\ov{r}})
\cdot\omega_t^B}
{\prod_{j=1}^r(\prod_{i=1}^n(t+_B\mu(j)_i^B))\cdot t}=\\
&g_*\operatornamewithlimits{Res}_{t=0}
\frac{\hat{H}_{X}(\prod_{i=1}^n x^{A}_i\alpha_j(\ov{z}^{A})|_{j\in\ov{r}})
(x^B_i=t+_B\mu_i^B|_{i\in\ov{n}},\ov{z}^B)
\cdot\omega_t^B}
{\prod_{i=1}^n(t+_B\mu_i^B)\cdot t},
\end{split}
\end{equation*}
which gives $(b_{ii})$.

(3) The fact that $\depn{i}{m-1}{(\Hd{d})}$ satisfies $(a_{i-iv})$ is obvious.
As for $(b_{i-ii})$, we need to check it for all $j\in J$. Recall that
$\depn{i}{m-1}{H}(\alpha_i(l)|_{l\in\ov{m}},\gamma_j|_{j\neq i})=
\sum_{I\subset\ov{m}}(-1)^{m-|I|}H(\sum_{l\in I}\alpha_i(l),\gamma_j|_{j\neq i})$ is a $|J|$-ary operation
$(\boxtimes_{j\neq i} A^*_j)\boxtimes(A^*_i)^{\times m}\row B^*\circ(\prod^J)$.
So, clearly, if $\Hd{d}$ satisfies $(b_i)$ and $(b_{ii})$, for $j\neq i$, then
$\depn{i}{m-1}{(\Hd{d})}$ will satisfy it as well. Finally, for the $i$-coordinate, we get $(b_i)$ and $(b_{ii})$ simply because the
pull-backs $f^*$ and push-forwards $g_*$ respect addition.
\Qed
\end{proof}

We will extend all the poly-operations of arbitrary fold-ness from $(\bullet\times\proj)^r$ to $((\smk)_{\leq d}\times\proj)^r$
simultaneously by induction on $d$. Moreover, we want this extension to be "coherent" in some sense.

\begin{definition}
\label{Gdd}
We say that a coherent compatible family of dimension $\leq d$ is defined, if for every natural $r$ and every $r$-ary poly-operation
$H$ on $(\bullet\times\proj)^r$ we assign a compatible family $\Hd{d}$ on $((\smk)_{\leq d}\times\proj)^r$ such that the assignment
$H\mapsto \Hd{d}$ is additive and satisfies:
\begin{itemize}
\item[$(c_i)$] For an $r$-ary poly-operation $H:\boxtimes_{i\in\ov{r}}A^*_i\row B^*\circ({\prod}^r)$ and morphisms
$\Phi_i:C_i^*\row A^*_i$ of theories of rational type with $\Phi=\boxtimes_{i\in\ov{r}}\Phi_i$
and a pre-morphism $\Psi:B^*\row (B')^*$ of theories (Definition \ref{pre-mor}),
we have:
$$
\Psi\circ(\Hd{d})\circ\Phi=\Hanyd{(\Psi\circ H\circ\Phi)}{d}.
$$
\item[$(c_{ii})$] For a $|J|$-ary poly-operation $H$ and a surjective map $\ffi:J\row I$ of finite sets, we have:
$$
(\Hd{d})^{\ffi}=\Hanyd{(H^{\ffi})}{d}.
$$
\item[$(c_{iii})$] For a $|J|$-ary poly-operation $H$, $i\in J$ and a natural $m$, we have:
$$
\depn{i}{m-1}{(\Hd{d})}=\Hanyd{(\depn{i}{m-1}{H})}{d}.
$$
\item[$(c_{iv})$] For a $|J|$-ary poly-operation
$H:\boxtimes_{j\in J}A^*_j\row B^*\circ({\prod}^J)$, a subset
$J'\stackrel{\chi}{\row} J$ and a
choice of (smooth quasi-projective) varieties $X_i$ of dimension $\leq d$ and elements
$x_i\in A^*_i(X_i)$,  for $i\in J\backslash J'$, we have:
$$
\res{(\Hd{d})}{\chi,\vec{x}}=\Hanyd{(\res{H}{\chi,\vec{x}})}{d}.
$$
\end{itemize}
\end{definition}

To start the induction process we need to define a {\it coherent compatible family of dimension $\leq 0$}.

Let $H$ be an $|L|$-ary poly-operation $\boxtimes_{i\in L}A^*_i\row B^*\circ(\prod^L)$ on $(\bullet\times\proj)^{\times L}$.
We would like to extend it to $((\smk)_{\leq 0}\times\proj)^{\times L}$. Let us denote $\check{A}^*_i=A^*_i[[\ov{z}^{A_i}]]$.
Let $X(i)$ be some smooth variety of dimension $0$. Then $X(i)=\coprod_{j\in K(i)}X(i)_j$, where $K(i)=K(X(i))$ is some finite set and
$X(i)_j$ is a field spectrum. We have closed embeddings and projections
$X(i)\stackrel{h(i)_j}{\llow}X(i)_j\stackrel{\pi(i)_j}{\lrow}\bullet$.
Since $A^*_i$ is constant, $\check{A}^*_i(X(i))=\oplus_{j\in K(i)}\check{A}_i$,
where any $\alpha(i)\in\check{A}^*_i(X(i))$ can be uniquely written as $\sum_{j\in K(i)}(h(i)_j)_*(\pi(i)_j)^*\alpha(i)_j$, for
some $\alpha(i)_j\in\check{A}_i$. For any choice of varieties $X(i)$ of dimension $0$ and elements $\alpha(i)\in\check{A}^*_i(X(i))$,
for $i\in L$, let us define:
\begin{equation*}
H_{X(i);i\in L}(\alpha(i)|_{i\in L}):=\sum_{j(i)\in K(i);i\in L}
(h_{\vec{j}})_*(\pi_{\vec{j}})^*H(\alpha(i)_{j(i)}|_{i\in L}),
\end{equation*}
where $h_{\vec{j}}=\times_{i\in L}h(i)_{j(i)}$ and $\pi_{\vec{j}}=\times_{i\in L}\pi(i)_{j(i)}$.
So, different connected components don't interact.
This way, we get a family $\Hd{0}$. It is easy to see that it satisfies $(a_{i-iv})$, $(b_{i-ii})$ and $(c_{i-iv})$.

Indeed, for $(a_{i-iv})$ it is obvious, since it is so for $H$, and pull-backs and push-forwards in our formula are $B$-linear.
For $(b_{ii})$, the regular embedding $g:X(i)\row Y(i)$, in our situation ($\ddim=0$), is just an inclusion of some part of the
connected components, so $g_*$ commutes with our poly-operation, by the very definition.
For $(b_{i})$, a map $f:X(i)\row Y(i)$ defines a map of sets $K(X(i))\row K(Y(i))$ which completely determines $f^*$, and
commutativity of it with $\Hanyd{H}{0}$ is again clear from our formula.

For $(c_{i-iii})$, it is sufficient to check the conditions over connected components, where identities hold, since these hold over the
$\bullet$. Finally, for $(c_{iv})$, it is enough to consider the case where $L\backslash L'=\{i\}$ is a one-point set. Then it follows
from our formula that
\begin{equation*}
\res{(\Hd{0})}{\chi,\alpha(i)}=\sum_{j\in K(i)}(h(i)_j\times id)_*(\pi(i)_j\times id)^*\Hanyd{(\res{H}{\chi,\alpha(i)_j})}{0}=
\Hanyd{(\res{H}{\chi,\alpha(i)})}{0}.
\end{equation*}
Thus, we obtain a {\it coherent compatible family of dimension $\leq 0$}.

\subsection{Induction step}

Everywhere below we will assume that a {\it coherent compatible family of dimension $\leq (d-1)$} is defined.
And $G_X$ will always mean a map from this coherent compatible family (and similar for poly-operations).
We would like to extend it to a similar family of dimension $\leq d$.

\begin{proposition}
\label{specializ}
Suppose, $G$ is some mono-transformation,
$X$ has dimension $\leq (d-1)$, and $L$ is a line bundle on $X$
with $A$ and $B$-Chern roots $\lambda^A$ and $\lambda^B$. Then
$$
G_X(\alpha(\lambda^A,\ov{z}^A))(\ov{z}^B)=
G_X(\alpha(x^A,\ov{z}^A))(x^B=\lambda^B,\ov{z}^B).
$$
Similar result holds for any given coordinate of a poly-operation.
\end{proposition}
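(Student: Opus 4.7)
The plan is to realize $\lambda^A$ as the pushforward of $1$ along the inclusion of a smooth divisor, apply the pushforward axiom $(b_{ii})$ to transfer the identity to a lower-dimensional variety, and conclude by a nested induction on $\dim X$.

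First I would reduce to the case $\cl\cong\co(D)$ for a smooth divisor $d:D\hookrightarrow X$: since $X$ is smooth quasi-projective, any line bundle is a ratio of very ample ones, and once the proposition is established for effective divisors, the general case follows from the compatibility of $G$ with the formal group law. Having $\lambda^A=d^A_*(1)$, the elementary decomposition $\alpha(x^A,\ov{z}^A)=\alpha(0,\ov{z}^A)+x^A\,\tilde\alpha(x^A,\ov{z}^A)$ together with the projection formula yields
$$
\alpha(\lambda^A,\ov{z}^A)\;=\;\alpha(0,\ov{z}^A)\;+\;d_*\big(\tilde\alpha(\lambda^A|_D,\ov{z}^A)\big).
$$

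Next I would apply the Discrete Taylor Expansion (Proposition \ref{DTE}) to $G_X$ of this sum, producing the three pieces $G_X(\alpha(0,\ov{z}^A))$, $G_X(d_*(\tilde\alpha|_D))$ and $\De{G_X}(\alpha(0,\ov{z}^A),\,d_*(\tilde\alpha|_D))$. Each slot of the form $d_*(\cdot)$ is rewritten via $(b_{ii})$ --- invoked for $G_X$ itself, and, thanks to the hypothesis $\HH(d-1)$, also for the bi-operation $\De{G_X}$ --- as a residue expression involving $G_D$ or $\den{1}{G_D}$ on the lower-dimensional divisor $D$, with the unique $B$-Chern root of the normal bundle $\cn_d=\cl|_D$ being $\mu^B=\lambda^B|_D$. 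An induction on $\dim X$ nested inside the proof of the proposition, coupled with the simultaneous poly-operational induction of Subsection \ref{red}, then lets me replace each occurrence of the concrete class $\lambda^A|_D$ inside $G_D$ or $\den{1}{G_D}$ by a formal-variable specialization $x^A\mapsto\lambda^A|_D$ performed after the operation.

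To finish, I would run the parallel analysis on the right-hand side $G_X(\alpha(x^A,\ov{z}^A))(x^B=\lambda^B,\ov{z}^B)$. Using $\lambda^B=d^B_*(1)$ and the projection formula $(d^B_*(1))^n=d^B_*\big((\lambda^B|_D)^{n-1}\big)$, the substitution $x^B\mapsto\lambda^B$ can be recast as a pushforward from $D$ on the $B$-side; a second invocation of $(b_{ii})$ brings this to the same residue expression over $D$ as the one produced by the left-hand side. The main obstacle will be precisely this final matching: one must verify that the residue prescription of $(b_{ii})$ applied to the $A$-side pushforward coincides with the substitution $x^B\mapsto\lambda^B$ performed on the $B$-side after passage through $G_X$. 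Executing this comparison requires careful formal-group-law manipulations inside the residue formula and systematic use of the Discrete Chain Rule (Proposition \ref{CR}) to keep track of how poly-derivatives $\den{q}{G_X}$ interact with the residue operation.
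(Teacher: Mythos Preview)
Your approach shares the opening move with the paper --- realize $\lambda^A$ as $d_*(1)$ for a smooth divisor and invoke $(b_{ii})$ --- but the ``parallel analysis on the right-hand side'' contains a genuine gap. You propose to recast the substitution $x^B\mapsto\lambda^B$ as a $B$-side pushforward from $D$ and then apply ``a second invocation of $(b_{ii})$''. But $(b_{ii})$ is a statement about how $G$ transforms an $A$-side pushforward $g_*\alpha$; it says nothing about $B$-side pushforwards appearing \emph{after} $G$ has been applied. So that second invocation does not exist, and you correctly flag this matching as ``the main obstacle'' without resolving it. The nested induction on $\dim X$ you set up does not by itself close this gap, because after passing to $D$ you still face the same structural mismatch between an $A$-side residue formula and a $B$-side substitution.

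The paper sidesteps both the obstacle and the nested induction. It first isolates the linear case $\alpha=x^A\cdot\beta$ (Lemma~\ref{simplespec}) and proves it in one stroke: apply $(b_{ii})$ to $G_X(j_*j^*\beta)$, then $(b_i)$ to rewrite $G_Y(x^A\cdot j^*\beta)$ as $j^*G_X(x^A\cdot\beta)$, and then use the key divisibility $(\ref{conv})$ --- which you do not invoke --- saying that $G_X(x^A\cdot\beta)(x^B,\ov{z}^B)$ is divisible by $x^B$. This cancels the factor $(t+_B\lambda^B)$ in the residue denominator, after which $j_*j^*$ supplies exactly the missing $\lambda^B$ and the residue collapses to $G_X(x^A\cdot\beta)(x^B=\lambda^B,\ov{z}^B)$. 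For powers $(\lambda^A)^r$ one introduces $r$ separate formal variables, applies the lemma once per variable, and collapses via $(a_{iv})$; general $\alpha$ then follows by the Discrete Taylor Expansion applied termwise (which needs the poly-operational hypothesis $\HH(d-1)$). Only the passage from very ample to arbitrary line bundles proceeds as you outline, via $(a_{iii})$.
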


\begin{proof} If $H$ is a $|J|$-ary poly-operation, then considering the restriction $H_{\chi}$, where $\{i\}\stackrel{\chi}{\row}J$
is an embedding of a one-pointed subset, and using $(c_{iv})$ we reduce to the case of a mono-operation.

We start with the case of a very ample bundle $L$. Here we claim (cf. \cite[Lemma 5.5]{SU}) that, for any power series $\beta$,
$$
G_X(\lambda^A\cdot\beta(\ov{z}^A))(\ov{z}^B)=
G_X(x^A\cdot\beta(\ov{z}^A))(x^B=\lambda^B,\ov{z}^B).
$$
Indeed, in our case, $\lambda^A=j_*(1)$, for a smooth divisor $Y\stackrel{j}{\row}X$.
Then using $(b_{ii}),(b_{i})$, and $(\ref{conv})$, we get:
\begin{equation*}
\begin{split}
&G_X(\lambda^A\cdot\beta(\ov{z}^A))(\ov{z}^B)=
G_X(j_*j^*\beta(\ov{z}^A))(\ov{z}^B)=
j_*\operatornamewithlimits{Res}_{t=0}
\frac{G_Y(x^A\cdot j^*\beta(\ov{z}^A))(x^B=t+_B\lambda^B,\ov{z}^B)
\cdot\omega_t^B}{(t+_B\lambda^B)\cdot t}=\\
&j_*j^*\operatornamewithlimits{Res}_{t=0}
\frac{G_X(x^A\cdot\beta(\ov{z}^A))(x^B=t+_B\lambda^B,\ov{z}^B)
\cdot\omega_t^B}{(t+_B\lambda^B)\cdot t}=
\operatornamewithlimits{Res}_{t=0}
\frac{(t+_B\lambda^B)\cdot G_X(x^A\cdot\beta(\ov{z}^A))(x^B=t+_B\lambda^B,\ov{z}^B)
\cdot\omega_t^B}{(t+_B\lambda^B)\cdot t}=\\
&G_X(x^A\cdot\beta(\ov{z}^A))(x^B=\lambda^B,\ov{z}^B).
\end{split}
\end{equation*}
as claimed.
Then, by the above and $(a_{iv})$, for a very ample $L$, one obtains:
$$
G_X((\lambda^A)^r\cdot\beta(\ov{z}^A))(\ov{z}^B)=
G_X(\prod_{i\in\ov{r}}x^A_i\cdot\beta(\ov{z}^A))(x^B_i=\lambda^B|_{i\in\ov{r}},\ov{z}^B)=
G_X((y^A)^r\cdot\beta(\ov{z}^A))(y^B=\lambda^B,\ov{z}^B).
$$
In the same way, we get the (external) poly-operational case:
\begin{equation*}
\begin{split}
&H_{X,X_j|_{j\neq i}}\left((\lambda^{A_i})^r\cdot\beta_i(\ov{z}^{A_i}(i)),
\alpha_j(\ov{z}^{A_j}(j))|_{j\neq i}\right)\left(\ov{z}^B(i),\ov{z}^B(j)|_{j\neq i}\right)=\\
&H_{X,X_j|_{j\neq i}}\left((x^{A_i}(i))^r\cdot\beta_i(\ov{z}^{A_i}(i)),
\alpha_j(\ov{z}^{A_j}(j))|_{j\neq i}\right)\left(x^{B}(i)=
\lambda^B,\ov{z}^B(i),\ov{z}^B(j)|_{j\neq i}\right).
\end{split}
\end{equation*}
Applying it to each term of the Taylor expansion (\ref{Tmon}) (corresponding to $x^A$-monomials), we get:
$$
G_X(\alpha(\lambda^A,\ov{z}^A))(\ov{z}^B)=
G_X(\alpha(x^A,\ov{z}^A))(x^B=\lambda^B,\ov{z}^B),
$$
for any very ample $L$.

An arbitrary line bundle can be presented as $L\otimes M^{-1}$, for some very ample line bundles
$L$ and $M$. Then, using $(a_{iii'})$ and the very ample case, we get:
\begin{equation*}
\begin{split}
&G_X(\alpha(\lambda^A-_A\mu^A,\ov{z}^A))(\ov{z}^B)=
G_X(\alpha(x^A-_Ay^A,\ov{z}^A))(x^B=\lambda^B,y^B=\mu^B,\ov{z}^B)=\\
&G_X(\alpha(w^A,\ov{z}^A))(w^B=\lambda^B-_B\mu^B,\ov{z}^B),
\end{split}
\end{equation*}
which is equivalent to what we need.
\Qed
\end{proof}

We would like to extend our poly-operation to varieties of dimension $d$.
We start with elements having positive co-dimension of support and, moreover,
supported on some divisor with strict normal crossings. Let $\Gd{d-1}$ be a compatible family of dimension $\leq (d-1)$
for a mono-operation.

Let $X\stackrel{p_{X}}{\row}\spec(k)$
be a smooth quasi-projective variety of dimension $\leq d$. Let $D\stackrel{d}{\row}X$ be a divisor
with strict normal crossings with components $D_{J_0}\stackrel{\hat{d}_{J_0}}{\row}D$ of
multiplicity $m_{J_0},\,J_0\in M_0$,
$d_{J_0}=d\circ\hat{d}_{J_0}$,
and $\lambda^B_{J_0}=c^B_1(O(D_{J_0}))$.
Let $\gamma(\ov{z}^A)=\sum_{J_0\in M_0}(\hat{d}_{J_0})_*(\gamma_{J_0}(\ov{z}^A))\in A^*(D)[[\ov{z}^A]]$.
We would like to define $G_{X}(d_*\gamma(\ov{z}^A))(\ov{z}^B)$. We do it using the Taylor expansion
\ref{DTE} and $(b_{ii})$. Define
\begin{equation}
\label{Gtil}
\wtO{G}(\gamma(\ov{z}^A)\dagger D)(\ov{z}^B):=\sum_{J_1\in \breve{M}_1}
(d_{J_1})_*\operatornamewithlimits{Res}_{t=0}
\frac{(\den{|J_1|-1}{G})_{D_{J_1}}(y^A_{J_0}\gamma_{J_0}(\ov{z}^A)|_{J_0\in J_1})
(y^B_{J_0}\hspace{-0.5mm}=\hspace{-0.5mm}t+_B\lambda_{J_0}^B|_{J_0\in J_1},\ov{z}^B)
\cdot\omega_t^B}
{\prod_{J_0\in J_1}(t+_B\lambda_{J_0}^B)\cdot t},
\end{equation}
where $M_1=2^{M_0}$ as in Section \ref{DiscrTaylor}, and $\gamma_{J_0}$ is restricted from $D_{J_0}$ to $D_{J_1}$.
Notice, that this expression makes sense since the dimensions of all $D_{J_1}$'s are $\leq (d-1)$.
In particular, if $\ddim(X)\leq (d-1)$, then
$$
(\den{|J_1|-1}{G})_{D_{J_1}}(y^A_{J_0}\gamma_{J_0}(\ov{z}^A)|_{J_0\in J_1})=
\Delta_{J_1}^*(\Den{|J_1|-1}{G})_{D_{J_0}|_{J_0\in J_1}}(y^A_{J_0}\gamma_{J_0}(\ov{z}^A)|_{J_0\in J_1}),
$$
which we have by $(b_i)$ applied to the embeddings $D_{J_1}\row D_{J_0}$; $J_0\in J_1$ and $(c_{ii})$ (giving that the
"internalization" of the external derivative is the internal one).
Then using the transversal cartesian square:
\begin{equation}
\label{DJ1-square}
\xymatrix{
\displaystyle{\operatornamewithlimits{\times}_{J_0\in J_1}} D_{J_0} \ar@{->}[r]^(0.55){\times d_{J_0}} &
\displaystyle{\operatornamewithlimits{\times}_{J_0\in J_1}} X\\
D_{J_1} \ar@{->}[u]^(0.4){\Delta_{J_1}} \ar@{->}[r]^{d_{J_1}} & X \ar@{->}[u]_(0.4){\Delta_{X,J_1}}
},
\end{equation}
the condition $(b_{ii})$ for $(\Den{|J_1|-1}{G})$ (applied to $d_{J_0}$; $J_0\in J_1$), $(c_{ii-iii})$, and the Taylor expansion, we obtain:
\begin{equation}
\label{wtO}
\wtO{G}(\gamma(\ov{z}^A)\dagger D)(\ov{z}^B)=\sum_{J_1\in \breve{M}_1}
(\den{|J_1|-1}{G})_X\left((d_{J_0})_*\gamma_{J_0}(\ov{z}^A)|_{J_0\in J_1}\right)(\ov{z}^B)=
G_{X}(d_*\gamma(\ov{z}^A))(\ov{z}^B),
\end{equation}
so the "new" definition of $G_{X}$ coincides with the "old" one.

Analogously, suppose that $\Hd{d-1}$ is a compatible family of dimension $\leq (d-1)$ for a poly-operation
$(\boxtimes_{i\in\ov{r}}A_i^*)\row B^*\circ(\prod^r)$. Let
$X(i),\,i\in\ov{r}$ be smooth quasi-projective varieties of dimension $\leq d$,
$X(i)\stackrel{d(i)}{\llow}D(i)=\sum_{J(i)_0\in M(i)_0}m(i)_{J(i)_0}\cdot D(i)_{J(i)_0}$
be divisors with strict normal crossings, with $\hat{d}(i)_{J(i)_0}$,
$d(i)_{J(i)_0}$ and
$\lambda(i)^{B}_{J(i)_0}$ defined as above. Denote $\wt{\lambda}(i)_{J(i)_0}^B:=t+_B\lambda(i)^{B}_{J(i)_0}$,
$(\wt{\lambda}(i)^B)^{J(i)_1}=\prod_{J(i)_0\in J(i)_1}\wt{\lambda}(i)^B_{J(i)_0}$.
Let $\gamma(i)\left(\ov{z}(i)^{A_i}\right)=
\sum_{J(i)_0\in M(i)_0}(\hat{d}(i)_{J(i)_0})_*(\gamma(i)_{J(i)_0}(\ov{z}(i)^{A_i}))
\in A^*_i(D(i))[[\ov{z}(i)^{A_i}]]$.
Define
\begin{equation*}
\begin{split}
&\wtO{H}\left(\gamma(i)(\ov{z}(i)^{A_i})\dagger D(i)|_{i\in\ov{r}}\right)(\ov{z}(i)^B|_{i\in\ov{r}}):=
\sum_{J(i)_1\in \breve{M}(i)_1; i\in\ov{r}}
(\times_{i=1}^r d(i)_{J(i)_1})_*\operatornamewithlimits{Res}_{t=0}\\
&\frac{(\den{|J(i)_1|-1;i\in\ov{r}}{H})\left(y^{A_i}_{J(i)_0}\cdot
\gamma_{J(i)_0}(\ov{z}(i)^{A_i})|_{J(i)_0\in J(i)_1;\, i\in\ov{r}}\right)
\left(y^B_{J(i)_0}\hspace{-0.5mm}=
\wt{\lambda}(i)_{J(i)_0}^B|_{J(i)_0\in J(i)_1},\ov{z}(i)^B|_{i\in\ov{r}}\right)
\cdot\omega_t^B}
{\prod_{i\in\ov{r}}(\wt{\lambda}(i)^B)^{J(i)_1}\cdot t},
\end{split}
\end{equation*}
where $\gamma_{J(i)_0}$ is restricted from $D(i)_{J(i)_0}$ to
$D(i)_{J(i)_1}$.
As above, one can see that, for varieties $X(i)$ of dimension $\leq (d-1)$,
\begin{equation}
\begin{split}
\label{wtO-H}
\wtO{H}\left(\gamma(i)(\ov{z}(i)^{A_i})\dagger D(i)|_{i\in\ov{r}}\right)=
H\left(d(i)_*\gamma(i)(\ov{z}(i)^{A_i})|_{i\in\ov{r}}\right).
\end{split}
\end{equation}

If (in the notations of Section \ref{DiscrTaylor}) our $\gamma$ is given in the form
$\gamma(\ov{z}^A)=\sum_{J_1\in \breve{M}_1}(\hat{d}_{J_1})_*(\gamma_{J_1}(\ov{z}^A))$,
then we can define:
\begin{equation}
\begin{split}
\label{GtilE}
&\wtO{G}(\gamma(\ov{z}^A)\dagger D)(\ov{z}^B)=
\sum_{\emptyset\not\in J_2\in \breve{M}_2}(d_{Supp(J_2)})_*\operatornamewithlimits{Res}_{t=0}\\
&\frac{(\den{|J_2|-1}{G})_{D_{Supp(J_2)}}\left(\prod_{J_0\in J_1}y^A_{J_0}\cdot
\gamma_{J_1}(\ov{z}^A))|_{J_1\in J_2}\right)
(y^B_{J_0}=\wt{\lambda}^B_{J_0}|_{J_0\in Supp(J_2)},\ov{z}^B)\omega^B_t}
{(\wt{\lambda}^B)^{Supp(J_2)}\cdot t},
\end{split}
\end{equation}
where $\lambda_{J_0}^B=c_1^B(O(D_{J_0}))$, $\wt{\lambda}_{J_0}^B=t+_B\lambda_{J_0}^B$ and
$(\wt{\lambda}^B)^{J_1}=\prod_{J_0\in J_1}\wt{\lambda}_{J_0}^B$. And similarly for poly-operations.

Note, that if $\gamma$ is concentrated on the components of the divisor (i.e.,
$\gamma(\ov{z}^A)=\sum_{J_0\in M_0}(\hat{d}_{J_0})_*(\gamma_{J_0}(\ov{z}^A))$), then
(\ref{GtilE}) coincides with (\ref{Gtil}), as all the poly-operations involved (the derivatives)
vanish if one of the coordinates is zero (by $(c_{iii})$ these "extensions" are the usual derivatives of $G$),
and so, the non-trivial summands will correspond to $J_2$'s consisting only of one-element
subsets $J_1=\{J_0\}$.

Denote $\wtA{A^*(D)}:=\oplus_{J_1\in \breve{M}_1}A^*(D_{J_1})$. Our transformations
$\wtO{G}$ and $\wtO{H}$ are defined so far on $\wtA{A^*(D)}[[\ov{z}^A]]$, respectively
$\times_{i\in\ov{r}}\wtA{A^*_i(D(i))}[[\ov{z}(i)^{A_i}]]$.
But we have:

\begin{proposition}
\label{wtH}
Let $H:\boxtimes_{i\in L}A^*_i\row B^*\circ(\prod^L)$ be an $|L|$-ary (external) poly-operation. Then
\begin{itemize}
\item[$(0)$] $\wtO{H}$ satisfies conditions $(a_{i-iv})$.
\item[$(1)$] For any morphisms of free theories $\Phi_i:C^*_i\row A^*_i$, $i\in L$ with $\Phi=\boxtimes_{i\in L}\Phi_i$ and
a pre-morphism of theories $\Psi:B^*\row (B')^*$, we have:
$
\wtO{(\Psi\circ H\circ\Phi)}=\Psi\circ(\wtO{H})\circ\Phi.
$
\item[$(2)$] For any surjective map $\ffi:L\row K$ of finite sets, $\wtO{(\sver{H}{\ffi})}=\sver{(\wtO{H})}{\ffi}$.
In particular, $\wtO{\hat{H}}=\hat{\wtO{H}}$.
\item[$(3)$]
$
\wtO{\left(\den{k(i);i\in L}{H}\right)}=
\left(\den{k(i);i\in L}{\wtO{H}}\right)
$
as maps $\times_{i\in L}(\wtA{A^*_i(D(i))}[[\ov{z}(i)^{A_i}]])^{\times (k(i)+1)}\lrow
B^*(\prod_{i\in L}X(i))[[\ov{z}(i)^B|_{i\in L}]]$.
\item[$(4)$] For any subset $\chi:L'\row L$ and any choice of varieties $X(i)$ of dimension $\leq d$ with normal crossing
divisors $D(i)$ on them and elements $x(i)\in\wtA{A^*_i(D(i))}[[\ov{z}(i)^{A_i}]]$, for $i\in L\backslash L'$, with
$\vec{x}=\{x(i)| i\in L\backslash L'\}$, we have:
$\res{(\wtO{H})}{\chi,\vec{x}}=({\res{H}{\chi,\vec{x}}}\hspace{-5mm}\wtO{\phantom{H}}\,\,)$.
\end{itemize}
\end{proposition}

\begin{proof}
(0) This follows from the fact that in the formula (\ref{GtilE}) the map $(\den{|J_2|-1}{G})_{D_{Supp(J_2)}}$ satisfies $(a_{i-iv})$
(by inductive assumption, since $\ddim(D_{Supp(J_2)})<d$), while the push-forwards in this formula are $B$-linear (and similar for
poly-operations).

(1) The fact that our extension is respected by morphisms of theories is obvious from $(c_i)$, since such a morphism is additive and
respects Chern classes, pull-backs and push-forwards. The same is true for $\Psi$ by Definition \ref{pre-mor} and
(\ref{pre-resp-FGL}).

(4) Clearly, it is enough to consider the case when $L\backslash L'$ is a one-point set $\{i\}$. Then
$\res{H}{\chi,x(i)}$ is an operation $\boxtimes_{j\in L'}A^*_j\row(B_{\chi,X(i)})^*\circ(\prod^{L'})$, where
$(B_{\chi,X(i)})^*(Y)=B^*(Y\times X(i))[[\ov{z}(i)^B]]$. And we have (using the poly-version of (\ref{GtilE}) and Proposition \ref{specializ}):
\begin{equation*}
\begin{split}
&\res{(\wtO{H})}{\chi,x(i)}(\gamma(j)\dagger D(j)|_{j\in L'})=
\wtO{H}(\gamma(j)\dagger D(j)|_{j\in L'}; x(i)\dagger D(i))=\\
&\sum_{\emptyset\not\in J_2\in \breve{M}(i)_2}(id\times d(i)_{Supp(J_2)})_*\operatornamewithlimits{Res}_{t=0}
\frac{(\res{\wtO{(F_{J_2})}}{\chi,u(J_2)})(\gamma(j)\dagger D(j)|_{j\in L'})\omega_t^B}
{(\wt{\lambda}(i)^B)^{Supp(J_2)}\cdot t},
\end{split}
\end{equation*}
where $F_{J_2}=\depn{i}{|J_2|-1}(H)$ is an $|L|$-ary poly-operation
$(\boxtimes_{j\in L'}A^*_j)\boxtimes(A^*_i)^{\times J_2}\row B^*\circ(\prod^{L})$ and
$u(J_2)\in (A^*_i)^{\times J_2}(D(i)_{Supp(J_2)})[[\ov{z}^A,t^A]]$, where, for $J_1\in J_2$,
$u(J_2)_{J_1}=(\wt{\lambda}(i)^A)^{J_1}\cdot x(i)_{J_1}$ restricted to $D(i)_{Supp(J_2)}$
(recall, that $x(i)=\sum_{J_1\in\breve{M}(i)_1}(\hat{d}_{J_1})_*x(i)_{J_1}$),
where we treat $t$ as an extra $(\pp^{\infty})$-coordinate, so $\wt{\lambda}(i)^A_{J_0}=\lambda(i)^A_{J_0}+_At^A$ and we denote the "outside" coordinate $t^B$ simply as $t$.

Since $\ddim(D(i)_{Supp(J_2)})\leq (d-1)$, by $(c_{iv})$ of the inductive assumption and (1), we can rewrite it as:
\begin{equation*}
\begin{split}
&\sum_{\emptyset\not\in J_2\in \breve{M}(i)_2}(id\times d(i)_{Supp(J_2)})_*\operatornamewithlimits{Res}_{t=0}
\frac{(\res{(F_{J_2})}{\chi,u(J_2)})\hspace{-12mm}\wtO{\phantom{H}}\hspace{9mm}(\gamma(j)\dagger D(j)|_{j\in L'})\omega_t^B}
{(\wt{\lambda}(i)^B)^{Supp(J_2)}\cdot t}=
(\res{H}{\chi,x(i)})\hspace{-8mm}\wtO{\phantom{H}}\hspace{5mm}(\gamma(j)\dagger D(j)|_{j\in L'}),
\end{split}
\end{equation*}
where $(id\times d(i)_{Supp(J_2)})_*$ is a pre-morphism of theories $(B_{\chi,D_{Supp(J_2)}})^*\row (B_{\chi,X})^*$,
while the division by $(\wt{\lambda}(i)^B)^{Supp(J_2)}$ is a pre-endomorphism of $(B_{\chi,D_{Supp(J_2)}})^*$ (with $t$ inverted) -
see Example \ref{exa-pre-mor}.

(2) Since $\sver{(\sver{H}{\ffi})}{\psi}=\sver{H}{(\psi\circ\ffi)}$, we can reduce to the case where $\ffi$ collapses some subset
$I\stackrel{\chi}{\row}L$ to a point, leaving everything else intact. Considering restrictions $\res{H}{\chi}$ and using (4),
we reduce to the case of a complete internalization $\hat{H}$.

Let $X$ be a smooth quasi-projective variety of dimension
$\leq d$ with divisor $D$ with strict normal crossings on it
(with notations as in (\ref{GtilE})), and $\alpha(i)(\ov{z}^A)\in\wtA{A^*_i(D)}[[\ov{z}^A]]$, for $i\in L$.
Taken together, these give $\alpha(\ov{z}^A)\in\wtA{A^*(D)}[[\ov{z}^A]]$.
Let $\emptyset\not\in J(i)_2\in\breve{M_2}$, for $i\in L$. To shorten notations, let us denote $Supp(I_2)$ as $S(I_2)$.
We have a cartesian square
$$
\xymatrix{
\displaystyle{\operatornamewithlimits{\times}_{i\in L}}D_{S(J(i)_2)} \ar@{->}[rr]^(0.6){\times_{i\in L}d_{S(J(i)_2)}} & & X^L\\
D_{S(\cup_{i\in L}J(i)_2)} \ar@{->}[u]^{\delta} \ar@{->}[rr]^{d_{S(\cup_{i\in L}J(i)_2)}} & & X \ar@{->}[u]_(0.5){\Delta_{X}}
},
$$
for which the Excess Intersection Formula (Proposition \ref{excess}) in $B^*$-theory has the form:
$$
\Delta_X^*(\times_{i\in\ov{r}}d_{S(J(i)_2)})_*(w)=(d_{S(\cup_{i\in L}J(i)_2)})_*
(\prod_{i\neq j}(\lambda^B)^{S(J(i)_2)\cap S(J(j)_2)}\cdot\delta^*(w)).
$$
Using this, and denoting
$\dddot{\alpha}(i)_{J(i)_1}\!\!:=\!\prod_{J(i)_0\in J(i)_1}y(i)^{A_i}_{J(i)_0}\!\!\cdot\alpha(i)_{J(i)_1}\!(\ov{z}(i)^{A_i})$ and
$\dddot{\alpha}(i)|_{J(i)_2}\!\!:=\!\dddot{\alpha}(i)_{J(i)_1}|_{J(i)_1\in J(i)_2}$, and similarly for $\alpha$,
and applying a poly-version of (\ref{GtilE}) and $(b_i)$ (for the maps $D_{S(J_2)}\!\!\row\!\! D_{S(J(i)_2)}$),
we get:
\begin{equation*}
\begin{split}
&\hat{\wtO{H}}(\alpha(\ov{z}^A)\dagger D)(\ov{z}^B)=\Delta^*_X\sum_{\emptyset\not\in J(i)_2\in \breve{M}_2;\,i\in L}
(\times_{i\in L}d_{S(J(i)_2)})_*\operatornamewithlimits{Res}_{t=0}\\
&\frac{(\den{|J(i)_2|-1;i\in L}{H})_{(D_{S(J(i)_2)}|_{i\in L})}
(\dddot{\alpha}(i)_{J(i)_1}|_{J(i)_1\in J(i)_2}|_{i\in L})
(y(i)^B_{J(i)_0}=\wt{\lambda(i)}^B_{J(i)_0}|_{J(i)_0\in S(J(i)_2)},\ov{z}(i)^B=\ov{z}^B|_{i\in L})\omega^B_t}
{\prod_{i\in L}(\wt{\lambda(i)}^B)^{S(J(i)_2)}\cdot t}=\\
&\sum_{\emptyset\not\in J_2\in \breve{M}_2}(d_{S(J_2)})_*\operatornamewithlimits{Res}_{t=0}
\frac{\sum_{\cup_{i\in L} J(i)_2=J_2}\hat{(\den{|J(i)_2|-1;i\in L}{H})}_{D_{S(J_2)}}
(\dddot{\alpha}(i)|_{J(i)_2}|_{i\in L})
(y^B_{J_0}=\wt{\lambda}^B_{J_0}|_{J_0\in S(J_2)},\ov{z}^B)\omega^B_t}
{(\wt{\lambda}^B)^{S(J_2)}\cdot t},
\end{split}
\end{equation*}
where in the last line all $J(i)_2$'s are non-empty. Here, by $(c_{ii})$ and $(c_{iii})$, the expression
$\hat{(\den{|J(i)_2|-1;i\in L}{H})}_{D_{S(J_2)}}$ is the partial derivative
$\den{|J(i)_2|-1;i\in L}{({\hat{H}}_{D_{S(J_2)}})}$ of the internal poly-operation.

Observe, that for any map of sets $F:C=\times_{i\in L}C_i\row D$ between abelian groups, and any
collection of elements $u_k\in C$, where $k\in I$ - some finite set, with "coordinates" $u(i)_k\in C_i$,
we have:
\begin{equation}
\begin{split}
\label{derCi-derC}
&(\den{|I|-1}{F})(u_k|_{k\in I})=
\sum_{\substack{\cup_{i\in L}I(i)=I\\ I(i)\neq\emptyset;i\in L}}(\den{|I(i)|-1;i\in L}{F})(u(i)_{k(i)}|_{k(i)\in I(i)}|_{i\in L}),
\end{split}
\end{equation}
where on the left we differentiate $F$ as a simple mono-function on $C$, and on the right we consider partial derivatives of it as
of poly-function on $C_i$'s.

Now, using $(c_{iii})$, we can rewrite our expression as
\begin{equation*}
\begin{split}
&\sum_{\emptyset\not\in J_2\in \breve{M}_2}(d_{S(J_2)})_*\operatornamewithlimits{Res}_{t=0}
\frac{(\den{|J_2|-1}{\hat{H}})_{D_{S(J_2)}}
(\dddot{\alpha}|_{J_2})
(y^B_{J_0}=\wt{\lambda}^B_{J_0}|_{J_0\in S(J_2)},\ov{z}^B)\omega^B_t}
{(\wt{\lambda}^B)^{S(J_2)}\cdot t}=
\wtO{\hat{H}}(\alpha(\ov{z}^A)\dagger D)(\ov{z}^B).
\end{split}
\end{equation*}

(3)
It is sufficient to consider the case of a partial derivative $\den{m-1}{}_{(i)}$, where
everything is reduced to mono-transformations with the help of (4). Let $G:A^*\row B^*$ be such a mono-operation,
$X$ be a smooth quasi-projective variety of dimension
$\leq d$ with divisor $D$ with strict normal crossings on it
(with notations as in (\ref{GtilE})). Then we need to compare $\wtO{(\den{m-1}{G})}$
and $\den{m-1}{(\wtO{G})}$ as maps
$$
(\wtA{A^*(D)}[[(\ov{z})^{A}]])^{\times m}\row B^*(X)[[(\ov{z})^B]].
$$
This follows from the definition (\ref{GtilE}) and the identity: $\den{|J_2|-1}{(\den{m-1}{G})}=\den{m-1}{(\den{|J_2|-1}{G})}$,
where the outer differentiations are non-partial, but "global" (so, we get a map $(A^*)^{\times m|J_2|}\row B^*$; note that this identity holds, since evaluated on $(x_{i,j}|i\in\ov{m},j\in J_2)$ both parts
give $\sum_{I,J}(-1)^{|I|+|J|}G(\sum_{i\in I,j\in J}x_{i,j})$, where the sum is over all subsets $I\subset\ov{m}$ and $J\subset J_2$).
Indeed, for $\alpha(i)\in \wtA{A^*(D)}[[\ov{z}^A]]$, $i\in\ov{m}$, denoting:
$\dddot{\alpha}(i)_{J_1}:=\prod_{J_0\in J_1}y_{J_0}^A\cdot\alpha(i)(\ov{z}^A)$
and $\dddot{\alpha}(i)|_{J_2}:=\dddot{\alpha}(i)_{J_1}|_{J_1\in J_2}$, for $i\in\ov{m}$, by $(c_{iii})$ we have:
\begin{equation*}
\begin{split}
&\den{m-1}{(\wtO{G})}(\alpha(i)(\ov{z}^A)|_{i\in\ov{m}}\dagger D)=\sum_{\emptyset\not\in J_2\in\breve{M}_2}\hspace{-4mm}
(d_{S(J_2)})_*\operatornamewithlimits{Res}_{t=0}
\frac{\den{m-1}{(\den{|J_2|-1}{G})}(\dddot{\alpha}(i)|_{J_2}|_{i\in\ov{m}})
(y^B_{J_0}=\wt{\lambda}^B_{J_0}|_{J_0\in S(J_2)},\ov{z}^B)\omega^B_t}
{(\wt{\lambda}^B)^{S(J_2)}\cdot t}=\\
&\sum_{\emptyset\not\in J_2\in\breve{M}_2}\hspace{-4mm}
(d_{S(J_2)})_*\operatornamewithlimits{Res}_{t=0}
\frac{\den{|J_2|-1}(\den{m-1}{G})(\dddot{\alpha}(i)|_{J_2}|_{i\in\ov{m}})
(y^B_{J_0}=\wt{\lambda}^B_{J_0}|_{J_0\in S(J_2)},\ov{z}^B)\omega^B_t}
{(\wt{\lambda}^B)^{S(J_2)}\cdot t}=\wtO{(\den{m-1}{G})}(\alpha(i)(\ov{z}^A)|_{i\in\ov{m}}\dagger D).
\end{split}
\end{equation*}
\Qed
\end{proof}

\begin{proposition}
\label{well-def-wtO}
$\wtO{G}(\gamma(\ov{z}^A)\dagger D)(\ov{z}^B)$ is well-defined on $A^*(D)[[\ov{z}^A]]$.
In particular, we may use the definition $(\ref{Gtil})$.
And similar for poly-operations.
\end{proposition}

\begin{proof}
For a smooth quasi-projective $X$
(of dimension $\leq d$) with divisor $D$ with strict normal crossings on it,
it follows from Proposition \ref{wtH} and Taylor expansion:
$\wtO{G}(\gamma+\delta\dagger D)=\wtO{G}(\gamma\dagger D)+\wtO{G}(\delta\dagger D)+\de{\wtO{G}}(\gamma,\delta\dagger D)$ that
$\wtO{G}(\gamma(\ov{z}^A)\dagger D)(\ov{z}^B)$ depends not on a particular
presentation $\gamma=\sum_{J_1\in \breve{M}_1}(\hat{d}_{J_1})_*(\gamma_{J_1})$, but on $\gamma\in A^*(D)[[\ov{z}^A]]$ only,
as long as we know that any poly-operation $\wtO{H}$ vanishes when one of the coordinates has the form
$\delta=(\hat{d}_{J'_1})_*(d_{J_1/J'_1})_*v-(\hat{d}_{J_1})_*v$.
Here $d_{J_1/J'_1}: D_{J_1}\row D_{J'_1}$ is the natural map between faces of our divisor.
This is equivalent to the fact that any poly-operation does not change value when we substitute $\beta=(\hat{d}_{J'_1})_*(d_{J_1/J'_1})_*v$
in one of the coordinates by $\alpha=(\hat{d}_{J_1})_*v$ (follows from the identities:
$f(x+\alpha)=f(x)+f(\alpha)+\de{f}(x,\alpha)$, $f(x+\beta)=f(x)+f(\beta)+\de{f}(x,\beta)$).
The latter fact is clear. Indeed, since everything is happening in one coordinate only,
it is sufficient to consider a mono-operational case (by restricting to mono-operations $\res{H}{\chi}$).
Let this operation be $G$ as above.
Observing that $d_{J_1}=d_{J'_1}\circ d_{J_1/J'_1}$, and using $(b_{ii})$
for the map $d_{J_1/J'_1}$ (note, that dimensions of our varieties here are $\leq (d-1)$), we have:
\begin{equation*}
\begin{split}
&\wtO{G}(\beta\dagger D)=(d_{J'_1})_*\operatornamewithlimits{Res}_{t=0}
\frac{G_{D_{J'_1}}((y^A)^{J'_1}\cdot(d_{J_1/J'_1})_*v(\ov{z}^A))(y^B_{J_0}=\wt{\lambda}^B_{J_0}|_{J_0\in J'_1};\ov{z}^B)\omega^B_t}
{(\wt{\lambda}^B)^{J'_1}\cdot t}=\\
&(d_{J_1})_*\operatornamewithlimits{Res}_{t=0}
\frac{G_{D_{J_1}}((y^A)^{J_1}\cdot v(\ov{z}^A))(y^B_{J_0}=\wt{\lambda}^B_{J_0}|_{J_0\in J_1};\ov{z}^B)\omega^B_t}
{(\wt{\lambda}^B)^{J_1}\cdot t}=\wtO{G}(\alpha\dagger D).
\end{split}
\end{equation*}
Thus, $\wtO{G}(\gamma(\ov{z}^A)\dagger D)(\ov{z}^B)$ is well-defined on $A^*(D)[[\ov{z}^A]]$.
The same applies to poly-operations.
\Qed
\end{proof}

It follows from the very definition (\ref{Gtil}) that if $D'\stackrel{g}{\row}D$ is a subdivisor with strict normal crossings
and $\gamma\in A^*[[\ov{z}^A]]$, then
\begin{equation}
\label{subdiv}
\wtO{G}(\gamma\dagger D')=\wtO{G}(g_*\gamma\dagger D),
\end{equation}
and similar for poly-operations.

Here comes the key step of the proof. We will show that the construction $\wtO{G}(\gamma(\ov{z}^A)\dagger D)(\ov{z}^B)$
commutes with the pull-back maps (where on the divisor side it is the "combinatorial pull-back" of Definition \ref{fstar}).
Let
\begin{equation}
\label{divsquare}
\xymatrix @-0.7pc{
E \ar @{->}[r]^(0.5){e} \ar @{->}[d]_(0.5){\wtdi{f}}&
Y \ar @{->}[d]^(0.5){f}\\
D \ar @{->}[r]_(0.5){d} & X.
}
\end{equation}
be a Cartesian square, where $X$ and $Y$ are smooth quasi-projective and $D\stackrel{d}{\lrow}X$ and
$E\stackrel{e}{\lrow}Y$ are divisors with strict normal crossings.

\begin{proposition}
\label{central}
For any cartesian diagram $(\ref{divsquare})$ with $X$ and $Y$ of dimension $\leq d$ and $\gamma\in A^*(D)[[\ov{z}^A]]$, one has:
$$
f^*\wtO{G}\Bigl(\gamma(\ov{z}^A)\dagger D\Bigr)(\ov{z}^B)=
\wtO{G}\Bigl(\wtdi{f}^{\star}(\gamma(\ov{z}^A))\dagger E\Bigr)(\ov{z}^B).
$$
More generally, for an $r$-nary (external) poly-transformation, for $\ddim(X(i)),\ddim(Y(i))\leq d$,
we have:
$$
\bigl(\times_{i\in\ov{r}}f(i)\bigr)^*
\wtO{H}\Bigl(\gamma(i)(\ov{z}(i)^A)\dagger D(i)|_{i\in\ov{r}}\Bigr)(\ov{z}(i)^B|_{i\in\ov{r}})=
\wtO{H}\Bigl(\wtdi{f}(i)^{\star}\gamma(i)(\ov{z}(i)^A)
\dagger E(i)|_{i\in\ov{r}}\Bigr)(\ov{z}(i)^B|_{i\in\ov{r}}).
$$
\end{proposition}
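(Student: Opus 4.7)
The plan is first to reduce to the mono-operational case as in \S\ref{red}: fixing all but one argument of $H$ yields a single-variable transformation from $A_i^*$ to a modified theory $\wt{B}^*$ still satisfying Definition \ref{goct}, so it suffices to prove the statement for a mono-transformation $A^*\stackrel{G}{\row}B^*$. Next, by the Discrete Taylor Expansion (Proposition \ref{DTE}) applied to the decomposition $\gamma=\sum_{I_0\in L}(\hat{d}_{I_0})_*(\gamma_{I_0})$, combined with Proposition \ref{wtH}, both sides of the desired identity are determined by their values on component-supported classes $\gamma=(\hat{d}_{I_0})_*(\gamma_{I_0})$; the partial derivatives $\den{k}{G}$ that appear are again poly-transformations to which the very same identity is being proved in the simultaneous induction, so this reduction is legitimate.

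For such a component-supported $\gamma$, the defining formula (\ref{Gtil}) collapses to the single term indexed by $J_1=\{I_0\}$, because $G(0)=0$ forces $\den{k}{G}$ to vanish whenever any argument is zero. Pulling this term back by $f^*$, one moves $f^*$ past $(d_{I_0})_*$ via the Multiple Points Excess Intersection Formula (Proposition \ref{MPEIF}(1)), and inside the residue one replaces $f^*\lambda^B_{I_0}$ by the formal-group-law expression $\sum^B_{K_0\in M}[p_{I_0,K_0}]\cdot_B\mu^B_{K_0}$. On the right-hand side, one expands $\ov{f}^{\,\star}((\hat{d}_{I_0})_*\gamma_{I_0})$ by Definition \ref{fstar} as a sum over faces $E_{K_1}$ of $E$ of classes carrying the coefficients $F^{p_{I_0,K_0};K_0\in M}_{K_1}(\ov{\mu})$, substitutes the result into (\ref{Gtil}), and again collapses to single-component contributions via the projection formula.

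The two expressions are then matched face by face on $E$. The essential ingredients are: the inductive hypothesis $\HH(d-1)$ for the partial derivatives $\den{k}{G}$, applied on the faces $E_{K_1}$ (all of dimension $\leq d-1$); the specialization Proposition \ref{specializ}, which allows one to substitute the Chern-class expression $\sum^B_{K_0}[p_{I_0,K_0}]\cdot_B\mu^B_{K_0}$ into the variable $y^B$ of the power series $G(y^A\gamma_{I_0})$; and the combinatorial identity $\sum^B_{K_0}[p_{I_0,K_0}]\cdot_B\mu^B_{K_0}=\sum_{K_1\subset M}F^{p_{I_0,K_0}}_{K_1}(\ov{\mu})\prod_{K_0\in K_1}\mu^B_{K_0}$ that underlies Definition \ref{fstar}.

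The hardest part is the combinatorial bookkeeping: the left-hand side after $f^*$ features a single residue whose denominator $(t+_B f^*\lambda^B_{I_0})\cdot t$ contains the full formal-group-law sum over $K_0\in M$, while the right-hand side is a sum over faces $K_1$ of $E$ of residues with denominators $\prod_{K_0\in K_1}(t+_B\mu^B_{K_0})\cdot t$. Matching the two is a purely formal calculation with residues and the formal group law, driven by the $F_{K_1}^{p_{I_0,K_0}}$-decomposition above; once this is carried out for component-supported $\gamma$, the general case and the poly-operational version follow by the reductions of the first paragraph.
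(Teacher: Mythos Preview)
Your overall architecture matches the paper's: reduce the general divisor to the smooth case via the Taylor expansion and Proposition~\ref{wtH}, then handle the smooth case directly; the poly-statement for smooth divisors feeds back into the Taylor reduction. That structure is correct.

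The gap is in the smooth case, which you call a ``purely formal calculation with residues and the formal group law.'' It is not. When $D$ is smooth, the class $\ov{f}^{\,\star}(\gamma)$ is a sum over faces $E_{J_1}$ of $E$ weighted by the $C^A_{J_1}$, and plugging this into the definition~(\ref{Gtil}) of $\wtO{G}(\,\cdot\,\dagger E)$ produces a double sum over $J_2\in M_2$ of the derivatives $\den{|J_2|-1}{G}$ evaluated at the summands $\sum_{I'_1\subset J'_1}(-1)^{|J'_1|-|I'_1|}\wt{\mu}^A_{I'_1}\cdot f^*_{J_1}\gamma$. The paper collapses this double sum using the Discrete Chain Rule (Proposition~\ref{CR}) applied to the composite $\zz[J_1]\stackrel{F}{\to}A^*(E_{J_1})[[t,\ov{z}^A]]\stackrel{G}{\to}B^*(E_{J_1})[[t,\ov{z}^B]]$, where $F$ encodes the formal-group-law sum; this is precisely what turns the sum over $J_2$ with $Supp(J_2)=J_1$ into a single $\sum_{I_1\subset J_1}(-1)^{|J_1|-|I_1|}G(y^A\cdot f_{J_1}^*\gamma)(y^B=\wt{\mu}^B_{I_1})$. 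The remaining residue manipulation is then deferred to~\cite[Lemma~5.8]{SU}. You never invoke the Chain Rule, and your appeal to Proposition~\ref{specializ} does not substitute for it: specialization lets you evaluate $G$ at a single Chern class, but here you must reorganize an iterated family of discrete derivatives of $G$, which is exactly the job of Proposition~\ref{CR}. Without it the face-by-face matching cannot be completed.

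A second, smaller point: your opening reduction to a mono-transformation by ``fixing all but one argument'' is not how the paper proceeds for this proposition. The paper proves the smooth case (Proposition~\ref{subcentral}) for mono- and poly-transformations in parallel and then combines the poly smooth case with Taylor to obtain the general statement. Your reduction is delicate because both sides of the identity already involve $\ov{f}(j)^{\star}$ in the frozen coordinates, so freezing them presupposes the very compatibility you are trying to establish. It can be made to work, but only after the smooth poly case is in hand.
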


\begin{proof}
Let us treat the case of a smooth $D$ first.
Suppose, $D$ is smooth, $E=\sum_{J_0\in M_0}m_{J_0}\cdot E_{J_0}$, $\lambda^B=c^B_1(O_X(D))$, $\wt{\lambda}^B=t+_B\lambda^B$
and $\mu^B_{J_0}=c^B_1(O_Y(E_{J_0}))$,
$\wt{\mu}^B_{J_0}=t+_B\mu^B_{J_0}$ (and similar for $A$).
Denote $(\wt{\mu}^B)^{J_1}=\prod_{J_0\in J_1}\wt{\mu}^B_{J_0}$, \
$\wt{\mu}^B_{J_1}=
\sum_{J_0\in J_1}^Bm_{J_0}\cdot_B\wt{\mu}^B_{J_0}$, and
$\wt{C}_{J_1}^B=\frac{\sum_{I_1\subset J_1}(-1)^{|J_1|-|I_1|}\wt{\mu}^B_{I_1}}{(\wt{\mu}^B)^{J_1}}$, $C_{J_1}^B=\wt{C}_{J_1}^B(t=0)$
(and similar for $A$). Note, that $C_{J_1}^B=\left(F_{J_1}^{m_{J_0};J_0\in M_0}\right)^B(\vec{\mu})$
of Subsection \ref{divclassrefpull} - see \cite[(14) of Sect. 7.2]{SU}.

\begin{proposition}
\label{subcentral}
In the above situation, if $\ddim(Y)\leq d$ (while $\ddim(X)$ can be arbitrary), then
$$
\wtO{G}\Bigl(\wtdi{f}^{\star}(\gamma(\ov{z}^A))\dagger E\Bigr)(\ov{z}^B)=
\sum_{J_1\in \breve{M}_1}(e_{J_1})_*\operatornamewithlimits{Res}_{t=0}
\frac{C^B_{J_1}\cdot G_{E_{J_1}}\Bigl(y^A\cdot \wtdi{f}_{J_1}^*(\gamma(\ov{z}^A))\Bigr)
(y^B=f_{J_1}^*\wt{\lambda}^B,\ov{z}^B)\cdot\omega_t}
{f_{J_1}^*\wt{\lambda}^B\cdot t},
$$
where $\wtdi{f}_{J_1}:E_{J_1}\row D$ is an obvious map, and $f_{J_1}=d\circ\wtdi{f}_{J_1}$.

More generally, if $D(i)\stackrel{d(i)}{\lrow}X(i)$ are smooth divisors and $\ddim(Y(i))\leq d$,
then
\begin{equation*}
\begin{split}
&\wtO{H}\Bigl(\wtdi{f}(i)^{\star}\gamma(i)(\ov{z}(i)^A)\dagger E(i)|_{i\in\ov{r}}\Bigr)
(\ov{z}(i)^B|_{i\in\ov{r}})=
\sum_{J(i)_1\in \breve{M}(i)_1;i\in\ov{r}}(\times_{i\in\ov{r}}e(i)_{J(i)_1})_*\operatornamewithlimits{Res}_{t=0}\\
&\frac{\prod_{i\in\ov{r}}C(i)^B_{J(i)_1}\cdot
H\left(y(i)^A\cdot \wtdi{f}(i)_{J(i)_1}^*\gamma(i)(\ov{z}(i)^A)|_{i\in\ov{r}}\right)
\left(y(i)^B=f(i)_{J(i)_1}^*\wt{\lambda(i)}^B,\ov{z}(i)^B|_{i\in\ov{r}}\right)\cdot\omega_t}
{\prod_{i\in\ov{r}}f(i)_{J(i)_1}^*\wt{\lambda(i)}^B\cdot t}.
\end{split}
\end{equation*}
\end{proposition}

\begin{proof}
Using (\ref{GtilE}) and the Definition \ref{fstar} (as well as the notations of Section \ref{DiscrTaylor}),
we obtain:
\begin{equation*}
\begin{split}
&\wtO{G}(\wtdi{f}^{\star}(\gamma(\ov{z}^A))\dagger E)(\ov{z}^B)=
\wtO{G}\left(\sum_{J_1\in \breve{M}_1}(\hat{e}_{J_1})_* \wtdi{f}^*_{J_1}(\gamma(\ov{z}^A))\cdot
C_{J_1}^A\dagger E\right)(\ov{z}^B)=
\sum_{\emptyset\not\in J_2\in \breve{M}_2}(e_{Supp(J_2)})_*\operatornamewithlimits{Res}_{t=0}\\
&\frac{(\den{|J_2|-1}{G})_{E_{Supp(J_2)}}\left(\prod_{J_0\in J'_1}y^A_{J_0}\cdot
\wtdi{f}_{Supp(J_2)}^*(\gamma(\ov{z}^A))\cdot C_{J'_1}^A|_{J'_1\in J_2}\right)
(y^B_{J_0}=\wt{\mu}^B_{J_0}|_{J_0\in Supp(J_2)},\ov{z}^B)\omega_t}
{(\wt{\mu}^B)^{Supp(J_2)}\cdot t}
\end{split}
\end{equation*}
By Proposition \ref{convergence} and (\ref{convINpoly}) we can swap $C^A_{J'_1}$ by $\wt{C}^A_{J'_1}$, since the difference of the
respective numerators is divisible by $(\wt{\mu}^B)^{Supp(J_2)}\cdot t$ (here we treat $t$ as an extra $(\pp^{\infty})$-coordinate,
so it is actually, $t^A$ inside and $t^B$ outside).
Thus, our expression is equal to
$$
\sum_{J_1\in \breve{M}_1}(e_{J_1})_*\operatornamewithlimits{Res}_{t=0}\sum_{\substack{\emptyset\not\in J_2\in M_2\\ Supp(J_2)=J_1}}
R_{J_2}\cdot\omega_t,
$$
where
\begin{equation*}
\begin{split}
R_{J_2}
&=\frac{(\den{|J_2|-1}{G})_{E_{J_1}}\left(\prod_{J_0\in J'_1}y^A_{J_0}\cdot
\wtdi{f}_{J_1}^*(\gamma(\ov{z}^A))\cdot \wt{C}_{J'_1}^A|_{J'_1\in J_2}\right)
(y^B_{J_0}=\wt{\mu}^B_{J_0}|_{J_0\in J_1},\ov{z}^B)}
{(\wt{\mu}^B)^{J_1}\cdot t}\\
&=\frac{(\den{|J_2|-1}{G})_{E_{J_1}}\left(
\wtdi{f}_{J_1}^*(\gamma(\ov{z}^A))\cdot
\left(\sum_{I_1'\subset J_1'}(-1)^{|J_1'|-|I_1'|}\wt{\mu}^A_{I_1'}\right)|_{J'_1\in J_2}\right)
(\ov{z}^B)}
{(\wt{\mu}^B)^{J_1}\cdot t},
\end{split}
\end{equation*}
by the poly-operational version of Proposition \ref{specializ} and the definition of $\wt{C}_{J'_1}^A$.

Consider the pair of composable maps (of sets) between abelian groups:
$$
\zz[J_1]\stackrel{F}{\lrow}A^*(E_{J_1})[[t,\ov{z}^A]]\stackrel{G}{\lrow}B^*(E_{J_1})[[t,\ov{z}^B]],
$$
where $\zz[J_1]=\oplus_{J_0\in J_1}\zz\cdot x_{J_0}$, and
$\sum_{J_0\in J_1}u_{J_0}\cdot x_{J_0}\stackrel{F}{\mapsto}
\left(\sum_{J_0\in J_1}^A(u_{J_0}\cdot m_{J_0})\cdot_A\wt{\mu}^A_{J_0}\right)
\cdot \wtdi{f}_{J_1}^*(\gamma(\ov{z}^A))$.
Note, that:
$$
(\den{|J'_1|-1}{F})(x_{J_0}|_{J_0\in J'_1})=\wtdi{f}_{J_1}^*(\gamma(\ov{z}^A))\cdot
\left(\sum_{I_1'\subset J_1'}(-1)^{|J_1'|-|I_1'|}\wt{\mu}^A_{I_1'}\right).
$$
Then, by the Chain Rule (Proposition \ref{CR}), $(c_{iii})$ and Proposition \ref{specializ},
\begin{equation*}
\begin{split}
&\sum_{\substack{\emptyset\not\in J_2\in M_2\\ Supp(J_2)=J_1}}(\den{|J_2|-1}{G})_{E_{J_1}}\left(
\wtdi{f}_{J_1}^*(\gamma(\ov{z}^A))\cdot
\left(\sum_{I_1'\subset J_1'}(-1)^{|J_1'|-|I_1'|}\wt{\mu}^A_{I_1'}\right)|_{J'_1\in J_2}
\right)(\ov{z}^B)=\\
&(\den{|J_1|-1}{(G\circ F)})(x_{J_0}|_{J_0\in J_1})=
\sum_{I_1\subset J_1}(-1)^{|J_1|-|I_1|}G_{E_{J_1}}(y^A\cdot \wtdi{f}_{J_1}^*(\gamma(\ov{z}^A)))
(y^B=\wt{\mu}^B_{I_1},\ov{z}^B).
\end{split}
\end{equation*}
Thus,
\begin{equation*}
\begin{split}
\wtO{G}\left(\wtdi{f}^{\star}(\gamma(\ov{z}^A))\dagger E\right)(\ov{z}^B)=
\operatornamewithlimits{Res}_{t=0}\sum_{J_1\in \breve{M}_1}(e_{J_1})_*
\frac{\sum_{I_1\subset J_1}(-1)^{|J_1|-|I_1|}G_{E_{J_1}}(y^A\cdot \wtdi{f}_{J_1}^*(\gamma(\ov{z}^A)))
(y^B=\wt{\mu}^B_{I_1},\ov{z}^B)\cdot\omega_t}
{(\wt{\mu}^B)^{J_1}\cdot t}.
\end{split}
\end{equation*}
Using (\ref{conv}), we can introduce the power series
$\Psi_{J_1}(y^B,\ov{z}^B):=\frac{G_{E_{J_1}}(y^A\cdot\wtdi{f}_{J_1}^*(\gamma(\ov{z}^A)))(y^B,\ov{z}^B)}{y^B}
\in B^*(E_{J_1})[[y^B,\ov{z}^B]]$.
Then
\begin{equation*}
\begin{split}
\wtO{G}\left(\wtdi{f}^{\star}(\gamma(\ov{z}^A))\dagger E\right)(\ov{z}^B)=
\operatornamewithlimits{Res}_{t=0}\sum_{J_1\in \breve{M}_1}(e_{J_1})_*\sum_{I_1\subset J_1}(-1)^{|J_1|-|I_1|}
\wt{\mu}^B_{I_1}\frac{\Psi_{J_1}(y^B=\wt{\mu}^B_{I_1},\ov{z}^B)\cdot\omega_t}
{(\wt{\mu}^B)^{J_1}\cdot t}.
\end{split}
\end{equation*}
The rest of the proof is identical to that of \cite[Lemma 5.8]{SU}, as no additive properties of $G$
are used from this point and it is just some statement about the power series
$\Psi_{J_1}(y^B,\ov{z}^B)$ which uses only the fact that, for $L_1\subset J_1$, $(e_{J_1/L_1})^*\Psi_{L_1}=\Psi_{J_1}$,
which we have by $(b_i)$.
We still provide the details for the convenience of the reader.
From the definition of $\wt{C}^B_{L_1}$ we have the identity $\wt{\mu}^B_{I_1}=\sum_{L_1\subset I_1}\wt{C}^B_{L_1}\cdot(\wt{\mu}^B)^{L_1}$ (where the summand for $L_1=\emptyset$ is zero) which permits to rewrite our expression as
\begin{equation*}
\begin{split}
\operatornamewithlimits{Res}_{t=0}\sum_{J_1\in \breve{M}_1}(e_{J_1})_*\sum_{\emptyset\neq L_1\subset J_1}
\frac{\wt{C}^B_{L_1}}{t\cdot(\wt{\mu}^B)^{J_1\backslash L_1}}
(e_{J_1/L_1})^*\left(\sum_{L_1\subset I_1\subset J_1}(-1)^{|J_1|-|I_1|}\Psi_{L_1}(y^B=\wt{\mu}^B_{I_1},\ov{z}^B)\right)\cdot\omega^B_t,
\end{split}
\end{equation*}
where the sum
$\Phi_{J_1/L_1}:=\left(\sum_{L_1\subset I_1\subset J_1}(-1)^{|J_1|-|I_1|}\Psi_{L_1}(y^B=\wt{\mu}^B_{I_1},\ov{z}^B)\right)$ is divisible by
$(\wt{\mu}^B)^{J_1\backslash L_1}$ (this is true for any power series $\Psi$ - see the proof of \cite[Lemma 5.8]{SU}). Observe that, for a
fixed $L_1$,
\begin{equation*}
\begin{split}
&\operatornamewithlimits{Res}_{t=0}\sum_{L_1\subset J_1}(e_{J_1/L_1})_*
\frac{\wt{C}^B_{L_1}(e_{J_1/L_1})^*\Phi_{J_1/L_1}}{t\cdot(\wt{\mu}^B)^{J_1\backslash L_1}}\omega^B_t=
\operatornamewithlimits{Res}_{t=0}\sum_{L_1\subset J_1}
\frac{\wt{C}^B_{L_1}(\mu^B)^{J_1\backslash L_1}}{t\cdot(\wt{\mu}^B)^{J_1\backslash L_1}}\Phi_{J_1/L_1}\omega^B_t=
\operatornamewithlimits{Res}_{t=0}\sum_{L_1\subset J_1}
\frac{\wt{C}^B_{L_1}\Phi_{J_1/L_1}\omega^B_t}{t}
\end{split}
\end{equation*}
due to mentioned divisibility. Then our expression is equal to
$\sum_{L_1\in\breve{M}_1}(e_{L_1})_*\operatornamewithlimits{Res}_{t=0}S_{L_1}\omega^B_t$, where
\begin{equation*}
\begin{split}
&S_{L_1}=\frac{\wt{C}^B_{L_1}}{t}\sum_{L_1\subset J_1}\Phi_{J_1/L_1}=
\frac{\wt{C}^B_{L_1}}{t}\sum_{L_1\subset J_1}
\sum_{L_1\subset I_1\subset J_1}(-1)^{|J_1|-|I_1|}\Psi_{L_1}(y^B=\wt{\mu}^B_{I_1},\ov{z}^B)=
\frac{\wt{C}^B_{L_1}}{t}\Psi_{L_1}(y^B=\wt{\mu}^B_{M_0},\ov{z}^B).
\end{split}
\end{equation*}
So, $\operatornamewithlimits{Res}_{t=0}S_{L_1}\omega^B_t=C^B_{L_1}\cdot \Psi_{L_1}(y^B=f^*_{L_1}\lambda^B,\ov{z}^B)$ and we obtain:
$$
\wtO{G}\left(\wtdi{f}^{\star}(\gamma(\ov{z}^A))\dagger E\right)(\ov{z}^B)=
\sum_{L_1\in \breve{M}_1}(e_{L_1})_*\operatornamewithlimits{Res}_{t=0}
\frac{C^B_{L_1}\cdot G_{E_{L_1}}(y^A\cdot \wtdi{f}_{L_1}^*(\gamma(\ov{z}^A)))
(y^B=f_{L_1}^*\wt{\lambda}^B,\ov{z}^B)\cdot\omega_t}
{f_{L_1}^*\wt{\lambda}^B\cdot t}.
$$
In exactly the same way, if $D(i)$ is a smooth divisor on $X(i)$,
for an $r$-nary (external) poly-transformation, we get (using Proposition \ref{wtH}(4)):
\begin{equation*}
\begin{split}
&\wtO{H}\Bigl(\wtdi{f}(i)^{\star}\gamma(i)(\ov{z}(i)^A)\dagger E(i),
\gamma(j)(\ov{z}(j)^A)\dagger E(j)|_{j\neq i}\Bigr)
(\ov{z}(j)^B|_{j\in\ov{r}})=
\sum_{J(i)_1\in \breve{M}(i)_1}(e(i)_{J(i)_1}\times id)_*\operatornamewithlimits{Res}_{t=0}\\
&\frac{C(i)^B_{J(i)_1}\cdot \wtO{H}\left(y^A\cdot \wtdi{f}(i)_{J(i)_1}^*(\gamma(i)(\ov{z}(i)^A)),
\gamma(j)(\ov{z}(j)^A)\dagger E(j)|_{j\neq i}\right)
\left(y^B=f(i)_{J(i)_1}^*\wt{\lambda(i)}^B,\ov{z}(j)^B|_{j\in\ov{r}}\right)\cdot\omega_t}
{f(i)_{J(i)_1}^*\wt{\lambda(i)}^B\cdot t},
\end{split}
\end{equation*}
since the restriction to the $i$-th coordinate is an extension of a mono-operation.
\Qed
\end{proof}

It follows from Proposition \ref{subcentral}, $(b_i)$ (applied to the maps $\wtdi{f}(i)_{J(i)_1}$)
and the Multiple Points Excess Intersection Formula
(Proposition \ref{MPEIF}) that for an $r$-nary (external) poly-transformation
$H$, smooth divisors $D(i)\stackrel{d(i)}{\lrow}X(i)$, maps $Y(i)\stackrel{f(i)}{\lrow}X(i)$
fitting cartesian squares as above with $\ddim(X(i)),\ddim(Y(i))\leq d$, and
$\gamma(i)(\ov{z}(i)^A)\in A^*(D(i))[[\ov{z}(i)^A]]$, one has:
$$
\wtO{H}\Bigl(\wtdi{f}(i)^{\star}\gamma(i)(\ov{z}(i)^A)\dagger E(i)|_{i\in\ov{r}}\Bigr)
(\ov{z}(i)^B|_{i\in\ov{r}})=
(\times_{i\in\ov{r}}f(i))^*\wtO{H}
\Bigl(\gamma(i)(\ov{z}(i)^A)\dagger D(i)|_{i\in\ov{r}}\Bigr)(\ov{z}(i)^B|_{i\in\ov{r}}).
$$

Suppose now, $D=\sum_{I_0\in L_0}D_{I_0}$ is arbitrary, and
$\gamma(\ov{z}^A)=\sum_{I_0\in L_0}(\hat{d}_{I_0})_*\gamma_{I_0}(\ov{z}^A)$.
Then from the above we know the case of a smooth divisor $D_{I_0}$, and $\gamma_{I_0}(\ov{z}^A)$
on it. But $\wtdi{f}^{\star}$ is additive and so is $f^*$. Hence,
\begin{equation*}
\begin{split}
&\wtO{G}\Bigl(\wtdi{f}^{\star}\gamma(\ov{z}^A)\dagger E\Bigr)(\ov{z}^B)=
\sum_{I_1\in L_1}\bigl(\den{|I_1|-1}{\wtO{G}}\bigr)
\Bigl(\wtdi{f}^{\star}(\hat{d}_{I_0})_*\gamma_{I_0}(\ov{z}^A)|_{I_0\in I_1}\dagger E\Bigr)(\ov{z}^B)=\\
&\sum_{I_1\in L_1}\hat{\wtO{\bigl(\Den{|I_1|-1}{G}\bigr)}}
\Bigl(\wtdi{f}^{\star}(\hat{d}_{I_0})_*\gamma_{I_0}(\ov{z}^A)\dagger E|_{I_0\in I_1}\Bigr)(\ov{z}^B)=
\sum_{I_1\in L_1}\hat{\wtO{\bigl(\Den{|I_1|-1}{G}\bigr)}}
\Bigl(\wtdi{f}^{\star}_{I_0}\gamma_{I_0}(\ov{z}^A)\dagger E_{I_0}|_{I_0\in I_1}\Bigr)(\ov{z}^B),
\end{split}
\end{equation*}
where $E_{I_0}=f^{-1}(D_{I_0})$,
by the Taylor expansion, Proposition \ref{wtH}(2),(3) and (\ref{subdiv}).
By the case of a smooth $D$, and again (\ref{subdiv}), Proposition \ref{wtH}(2),(3)  and Taylor expansion, this can be rewritten as
\begin{equation*}
\begin{split}
&\sum_{I_1\in L_1}f^*\hat{\wtO{\bigl(\Den{|I_1|-1}{G}\bigr)}}
\Bigl(\gamma_{I_0}(\ov{z}^A)\dagger D_{I_0}|_{I_0\in I_1}\Bigr)(\ov{z}^B)=
\sum_{I_1\in L_1}f^*\hat{\wtO{\bigl(\Den{|I_1|-1}{G}\bigr)}}
\Bigl((\hat{d}_{I_0})_*\gamma_{I_0}(\ov{z}^A)\dagger D|_{I_0\in I_1}\Bigr)(\ov{z}^B)=\\
&\sum_{I_1\in L_1}f^*\bigl(\den{|I_1|-1}{\wtO{G}}\bigr)
\Bigl((\hat{d}_{I_0})_*\gamma_{I_0}(\ov{z}^A)|_{I_0\in I_1}\dagger D\Bigr)(\ov{z}^B)=
f^*\wtO{G}\bigl(\gamma(\ov{z}^A)\dagger D\bigr)(\ov{z}^B).
\end{split}
\end{equation*}
The case of a poly-transformation follows formally from a mono-operational one, since the restriction of $\wtO{H}$ to a given
coordinate is (an extension of) a mono-operation by Proposition \ref{wtH}(4).
\Qed
\end{proof}

Let $X$ be a smooth quasi-projective variety of dimension $\leq d$. We would like to define $G_X$.
Let us start with the $\ov{A}^*$-part.
By Theorem \ref{HcA}, we know that
$\ov{A}^*=\op{Coker}(c_{1,0}\oplus c_{0,1}\xrightarrow{d_{1,0}\oplus d_{0,1}}c_{0,0})$.

\begin{proposition}
\label{imrho}
Suppose $(V(i)\stackrel{v(i)}{\row}\wt{X}(i)\stackrel{\rho(i)}{\row}X(i),
\gamma(i)(\ov{z}^A));i\in\ov{r}$
be some elements as in $c_{0,0}$, and $H$ be some $r$-nary (external) poly-transformation.
Then
$$
\wtO{H}(\gamma(i)(\ov{z}^A)\dagger V(i)|_{i\in\ov{r}})\in im(\times_{i\in\ov{r}}\rho(i)^*).
$$
\end{proposition}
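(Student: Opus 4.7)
The plan is to reduce to a setting where Proposition \ref{central} applies directly. Write $\gamma(i)(\ov{z}^A)=\rho(i)^!(\delta(i)(\ov{z}^A))$ for some $\delta(i)\in A_*(Z(i))[[\ov{z}^A]]$. If $Z(i)$ happened to be a strict normal crossing divisor on $X(i)$, the identification $\rho(i)^!=\ov{\rho}(i)^{\,\star}$ from \cite[Lemma 7.19]{SU} applied to the cartesian square $(V(i)\hookrightarrow\wt{X}(i))\to(Z(i)\hookrightarrow X(i))$, combined with Proposition \ref{central}, would yield
$$
\wtO{H}\bigl(\gamma(i)\dagger V(i)|_{i\in\ov{r}}\bigr)=(\times_{i\in\ov{r}}\rho(i))^{*}\,\wtO{H}\bigl(\delta(i)\dagger Z(i)|_{i\in\ov{r}}\bigr),
$$
and the proposition would follow at once.

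In general, I would use Hironaka-style resolution of singularities to find, for each $i$, a projective birational morphism $\pi(i)\colon X(i)'\row X(i)$ factoring as $\pi(i)=\rho(i)\circ\pi'(i)$ with $\pi'(i)\colon X(i)'\row\wt{X}(i)$, such that $Z'(i):=\pi(i)^{-1}(Z(i))=(\pi'(i))^{-1}(V(i))$ is an SNC divisor on $X(i)'$. Applying Proposition \ref{central} to the cartesian squares
$$
\xymatrix{
Z'(i) \ar[r] \ar[d]_{\ov{\pi'}(i)} & X(i)' \ar[d]^{\pi'(i)}\\
V(i) \ar[r] & \wt{X}(i)
}
$$
(SNC divisors on both rows, for each $i$) yields
$$
(\times_{i\in\ov{r}}\pi'(i))^{*}\,\wtO{H}\bigl(\gamma(i)\dagger V(i)\bigr)=\wtO{H}\bigl((\ov{\pi'}(i))^{\,\star}\gamma(i)\dagger Z'(i)\bigr),
$$
and functoriality of refined pull-backs for $\pi(i)=\rho(i)\circ\pi'(i)$ identifies $(\ov{\pi'}(i))^{\,\star}\gamma(i)=(\pi'(i))^{!}\rho(i)^{!}\delta(i)=\pi(i)^{!}\delta(i)$, so that the right-hand side depends visibly only on the pair $(\pi(i),\delta(i))$.

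It remains to show that the right-hand side lies in the image of $(\times_{i}\pi(i))^{*}=(\times_{i}\pi'(i))^{*}\circ(\times_{i}\rho(i))^{*}$. Combined with the injectivity of $(\times_{i}\pi'(i))^{*}$ on $B^{*}$ --- obtained by iterating Proposition \ref{vvter}(2) over a decomposition of each $\pi'(i)$ into blow-ups with smooth centers --- this would transfer the conclusion to membership in $\op{im}((\times_{i}\rho(i))^{*})$, proving the proposition. The crux of the argument, and the main obstacle, is this descent step: constructing $\eta\in B^{*}(\prod_{i}X(i))[[\ov{z}^B]]$ pulling back to $\wtO{H}\bigl(\pi(i)^{!}\delta(i)\dagger Z'(i)\bigr)$ under $(\times_{i}\pi(i))^{*}$. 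I would establish this by showing independence of the auxiliary choice of resolution: given any two resolutions $\pi(i)$ and $\pi(i)''$ of the same pair $(X(i),Z(i))$, passing to a common refinement dominating both and applying Proposition \ref{central} twice shows that the corresponding $\wtO{H}$-expressions pull back to the same element on the refinement, and this invariance encodes precisely the required $\rho(i)$-descent.
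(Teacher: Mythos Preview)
Your reduction via Hironaka to a permitted blow-up $\pi(i)$ is the same as the paper's second step, and the use of Proposition~\ref{central} together with injectivity of $(\pi'(i))^*$ to transfer the conclusion back to $\rho(i)$ is also correct. The gap is the ``crux'' you flag yourself: you still need $\wtO{H}\bigl(\pi(i)^!\delta(i)\dagger Z'(i)\bigr)\in\op{im}\bigl((\times_i\pi(i))^*\bigr)$, and this is \emph{exactly} the statement of the proposition with $\pi(i)$ in place of $\rho(i)$. You have reduced the problem to itself. Your proposed fix --- ``invariance under refinement encodes the required descent'' --- does not work: knowing that two expressions on two resolutions agree after pulling back to a common refinement only tells you there is a well-defined element in the inverse limit over resolutions; it does not tell you this element comes from $B^*(\prod_i X(i))$. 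Invariance is necessary for descent, not sufficient.

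What is missing is the actual computation for the permitted blow-up case, and this is where the paper does the real work. It uses Proposition~\ref{vvter}(2), which gives a concrete criterion: an element of $B^*(\wt{X})$ lies in $\op{im}(\rho^*)$ if and only if its restriction to each exceptional component $E_{J_0}$ lies in $\op{im}(\eps_{J_0}^*)$, where $\eps_{J_0}\colon E_{J_0}\to R_{J_0}$ is the projection to the smooth center. One then restricts $\wtO{G}(\gamma\dagger V)$ to $E_{J_0}$ component by component (using Proposition~\ref{central} and transversal squares for $V_{I_0}\neq E_{J_0}$, and a direct residue computation for $V_{I_0}=E_{J_0}$) and shows the result equals $G(e_{J_0}^*v_*\gamma)$. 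The hypothesis $\gamma=\rho^!\beta$ is then used essentially: one rewrites $e_{J_0}^*v_*\rho^!\beta=\eps_{J_0}^*r_{J_0}^*z_*\beta$, and since $\ddim(E_{J_0}),\ddim(R_{J_0})\leq d-1$, the inductive hypothesis $\HH(d-1)$ (condition $(b_i)$) gives $G(\eps_{J_0}^*\cdots)=\eps_{J_0}^*G(\cdots)$. This is the descent; it comes from an explicit construction on the base, not from abstract invariance.
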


\begin{proof}
From evident transversal cartesian squares, it is sufficient to prove that
$\wtO{H}(\gamma(i)(\ov{z}^A)\dagger V(i)|_{i\in\ov{r}})\in im((\rho(i)\times id)^*)$, for each $i$ (just apply the composition
$\circ_{i\in\ov{r}}(\rho(i)\times id)^*(\rho(i)\times id)_*$ to our element).
Hence, it is sufficient to consider the case of a mono-operation (since the restriction to the $i$-th coordinate is an extension
of a mono-operation by Proposition \ref{wtH}(4)).
Here we follow the proof of \cite[Proposition 5.9]{SU}.

Start with the case where $\wt{X}\stackrel{\rho}{\row}X$ is a permitted blow up with smooth centers
$R_j$, and the respective components $E_j$ of the exceptional divisor of $\rho$ with maps:
$R_j\stackrel{\eps_j}{\low}E_j\stackrel{e_j}{\row}\wt{X}$. Since $\rho$ is an isomorphism outside $V$,
the components $E_j$ of the special divisor of $\rho$ are components of $V$
(and so, numbered by a subset of $M_0$).
In particular, these are transversal to all distinct components of $V$.
Let $\gamma(\ov{z}^A)=\sum_{I_0\in M_0}(\hat{v}_{I_0})_*\gamma_{I_0}(\ov{z}^A)$, where
$\gamma_{I_0}\in A^*(V_{I_0})[[\ov{z}^A]]$.
By Proposition \ref{vvter}, to prove that $\wtO{G}(\gamma(\ov{z}^A)\dagger V)\in im(\rho^*)$
one needs to show that $e_{J_0}^*(\wtO{G}(\gamma(\ov{z}^A)\dagger V))\in im(\eps_{J_0}^*)$, for every component $E_{J_0}$ of the
exceptional divisor.
If $V_{I_0}\neq E_{J_0}$, then we have a transversal cartesian square
$$
\xymatrix @-0.2pc{
Q_{I_0,J_0} \ar @{->}[r]^{u_{I_0,J_0}} \ar @{->}[d]_(0.5){q_{I_0,J_0}} &E_{J_0}
\ar @{->}[d]^(0.5){e_{J_0}}\\
V_{I_0} \ar @{->}[r]_{v_{I_0}} & \wt{X}.
}
$$
By Proposition \ref{central} and (\ref{wtO}), and since $\ddim(Q_{I_0,J_0}),\ddim(V_{I_0}),\ddim(E_{J_0})\leq (d-1)$, we get:
$$
e_{J_0}^*\wtO{G}(\gamma_{I_0}(\ov{z}^A)\dagger
V_{I_0})=\wtO{G}(q^*_{I_0,J_0}\gamma_{I_0}(\ov{z}^A)\dagger Q_{I_0,J_0})=
G((u_{I_0,J_0})_*q^*_{I_0,J_0}\gamma_{I_0}(\ov{z}^A))=G(e_{J_0}^*(v_{I_0})_*\gamma_{I_0}(\ov{z}^A)).
$$
And, for the $E_{J_0}$-component, by the very definition (\ref{Gtil}), (\ref{conv}) and Proposition \ref{specializ},
\begin{equation*}
\begin{split}
&e_{J_0}^*\wtO{G}(\gamma_{J_0}(\ov{z}^A)\dagger E_{J_0})(\ov{z}^B)=
\lambda_{J_0}^B\cdot \operatornamewithlimits{Res}_{t=0}
\frac{G(y^A\cdot\gamma_{J_0}(\ov{z}^A))(y^B=\wt{\lambda}_{J_0}^B,\ov{z}^B)\cdot\omega_t}
{\wt{\lambda}_{J_0}^B\cdot t}=\\
&G(\lambda_{J_0}^A\cdot\gamma_{J_0}(\ov{z}^A))(\ov{z}^B)=
G(e_{J_0}^*(e_{J_0})_*\gamma_{J_0}(\ov{z}^A))(\ov{z}^B).
\end{split}
\end{equation*}
Thus, $e_{J_0}^*\wtO{G}(\gamma_{I_0}(\ov{z}^A)
\dagger V_{I_0})=G(e_{J_0}^*(v_{I_0})_*\gamma_{I_0}(\ov{z}^A))$,
for all components $V_{I_0}$ of $V$.
And similar equality holds for any $r$-nary (external) poly-transformation $H$ by Proposition \ref{wtH}(4). Then, from the diagram
$$
\xymatrix @-0.2pc{
V \ar @{->}[r]^{v} \ar @{->}[d]_(0.5){\rho_V} & \wt{X} \ar @{->}[d]_(0.5){\rho}  &
E_{J_0} \ar @{->}[l]_(0.5){e_{J_0}}  \ar @{->}[d]^(0.5){\eps_{J_0}}   \\
Z \ar @{->}[r]_{z} & X    &  R_{J_0} \ar @{->}[l]^(0.5){r_{J_0}}
}
$$
with the left square cartesian, using Taylor expansion with Proposition \ref{wtH}(2),(3), (\ref{subdiv}),
the above fact, $(c_{ii})$ and $(c_{iii})$, again Taylor expansion,
the fact that $\gamma\in im(\rho^!)$, and $(b_i)$
(recalling that $\ddim(E_{J_0}),\ddim(R_{J_0})\leq (d-1)$), we obtain:
\begin{equation*}
\begin{split}
&e_{J_0}^*\wtO{G}(\gamma\!\dagger\! V\!)\!=\!
e_{J_0}^*\hspace{-2mm}\sum_{I_1\in M_1}\hspace{-2.5mm}\Delta_{\wt{X},I_1}^*\hspace{-1mm}
\wtO{(\Den{|I_1|-1}{G})}\!\Bigl(\hspace{-1mm}(v_{I_0})_*\gamma_{I_0}\!\dagger\! V|_{I_0\in I_1}\hspace{-1mm}\Bigr)\!=\!
e_{J_0}^*\hspace{-2mm}\sum_{I_1\in M_1}\hspace{-2.5mm}\Delta_{\wt{X},I_1}^*\hspace{-1mm}
\wtO{(\Den{|I_1|-1}{G})}\!\Bigl(\hspace{-1mm}\gamma_{I_0}\!\dagger\! V_{I_0}|_{I_0\in I_1}\hspace{-1mm}\Bigr)\!=\\
&\sum_{I_1\in M_1}\hspace{-2.5mm}\Delta_{E_{J_0},I_1}^*\hspace{-0.5mm}
(\Den{|I_1|-1}{G})\!\Bigl(e_{J_0}^*(v_{I_0})_*\gamma_{I_0}|_{I_0\in I_1}\hspace{-1mm}\Bigr)\!=
\hspace{-1mm}\sum_{I_1\in M_1}\hspace{-1mm}
(\den{|I_1|-1}{G})\Bigl(e_{J_0}^*(v_{I_0})_*\gamma_{I_0}|_{I_0\in I_1}\Bigr)=
G(e_{J_0}^*v_*\gamma)=\\
&G(e_{J_0}^*v_*\rho^!\beta)=G(e_{J_0}^*\rho^*z_*\beta)=
G(\eps_{J_0}^*r_{J_0}^*z_*\beta)=\eps_{J_0}^*G(r_{J_0}^*z_*\beta),
\end{split}
\end{equation*}
where $\Delta_{Y,I}:Y\row Y^{\times I}$ is the diagonal map.
This proves the case of a permitted blow up $\rho$.

If $\rho$ is an arbitrary projective bi-rational map, then
(by the results of Hironaka \cite{Hi}) there exists a
permitted blow up $\rho'=\rho\circ\pi$ with centers over $Z$, such that $V'=\pi^*(V)$ is also a
divisor with strict normal crossings. By Proposition \ref{central},
$\pi^*\wtO{G}(\gamma(\ov{z}^A)\dagger V)=\wtO{G}(\pi_V^{\star}\gamma(\ov{z}^A)\dagger V')$,
and the latter is in
the $im(\pi^*\rho^*)$ by the already proven case (recall that $\pi^{\star}_V=\pi^!$ and so, $\pi_V^{\star}\gamma\in im((\rho')^!)$).
Because $\pi^*$ is injective, we get: $\wtO{G}(\gamma(\ov{z}^A)\dagger V)\in im(\rho^*)$.
\Qed
\end{proof}

Having defined an extension $\wtO{G}$ of an operation $G$ for elements supported on some divisor with strict normal crossings
(in a variety of dimension $\leq d$), we can now extend it to the group $c_{0,0}$ of Subsection \ref{c}.
Let $X$ be a variety of dimension $\leq d$ and $\gamma(\ov{z}^A)=
\sum_{j\in K}\left(V\{j\}\stackrel{v\{j\}}{\row}\wt{X}\{j\}\stackrel{\rho\{j\}}{\row}X,
\gamma\{j\}(\ov{z}^A)\right)\in c_{0,0}=c_{0,0}^{\check{A}}$, where $\check{A}^*=A^*[[\ov{z}^A]]$ and $K$ is some finite set.

Let us define:
\begin{equation}
\label{defGX}
\wtD{G}_X(\gamma(\ov{z}^A))(\ov{z}^B):=
\sum_{J\subset K}\frac{\Delta_{X,J}^*\bigl(\times_{j\in J}\rho\{j\}\bigr)_*
(\wtO{\Den{|J|-1}{G}})
\Bigl(\gamma\{j\}(\ov{z}^A)\dagger V\{j\}|_{j\in J}\Bigr)(\ov{z}^B)}{\prod_{j\in J}(\rho\{j\})_*(1)},
\end{equation}
where $\Delta_{X,J}:X\row X^J$ is the diagonal embedding.
In exactly the same way one can define an $r$-nary (external) poly-transformation
$\wtD{H}$ on $\times_{i\in\ov{r}}(c_{0,0}^{\check{A}_i}(X(i)))$.

\begin{proposition}
\label{c00wtH}
Let $H:\boxtimes_{i\in L}A^*_i\row B^*\circ(\prod^L)$ be an $|L|$-ary (external) poly-operation. Then
\begin{itemize}
\item[$(0)$] $\wtD{H}$ satisfies $(a_{i-iv})$.
\item[$(1)$]
For any morphisms of free theories $\Phi_i:C^*_i\row A^*_i$, $i\in L$ with $\Phi=\boxtimes_{i\in L}\Phi_i$
and a pre-morphism of theories $\Psi:B^*\row (B')^*$ (Definition \ref{pre-mor}), we have:
$
\wtD{(\Psi\circ H\circ\Phi)}=\Psi\circ(\wtD{H})\circ\Phi.
$
\item[$(2)$]
For any surjective map $\ffi:L\row I$ of finite sets, $\wtD{(\sver{H}{\ffi})}=\sver{(\wtD{H})}{\ffi}$.
In particular, $\wtD{\hat{H}}=\hat{\wtD{H}}$.
\item[$(3)$]
$
\wtD{\left(\den{k(i);i\in L}{H}\right)}=
\left(\den{k(i);i\in L}{\wtD{H}}\right)
$
as maps $\times_{i\in L}(c_{0,0}^{\check{A}_i}(X(i)))^{\times (k(i)+1)}\lrow
B^*(\prod_{i\in L}X(i))[[\ov{z}(i)^B|_{i\in L}]]$.
\item[$(4)$]
For any subset $\chi:L'\row L$ and any choice of varieties $X(i)$ of dimension $\leq d$ with elements
$x(i)\in c_{0,0}^{\check{A}_i}(X(i))$, for
$i\in L\backslash L'$, with
$\vec{x}=\{x(i)| i\in L\backslash L'\}$, we have:
$\res{(\wtD{H})}{\chi,\vec{x}}=({\res{H}{\chi,\vec{x}}}\hspace{-5mm}\wtD{\phantom{H}}\,\,)$.
\end{itemize}
\end{proposition}

\begin{proof}
(0) This follows from the fact that in the formula (\ref{defGX}), the map $\wtO{(\Den{|J|-1}{G})}$ satisfies $(a_{i-iv})$
(by Proposition \ref{wtH}(0)), while the push-forwards, pull-backs and the division by the denominator in this formula
are $B$-linear maps (and similar for poly-operations).

(1) Follows immediately from the fact that a pre-morphism of theories commutes with pull-backs and push-forwards, using
Proposition \ref{wtH}(1).

(4) It is clearly sufficient to consider the case where $L\backslash L'=\{i\}$ is a one-point set.
Let $x(i)=\sum_{j\in K}\left(V\{j\}\stackrel{v\{j\}}{\row}\wt{X}\{j\}\stackrel{\rho\{j\}}{\row}X,
x(i)\{j\}(\ov{z}^A)\right)$. Then
\begin{equation*}
\begin{split}
&(\wtD{H})_{\chi,x(i)}(\gamma(l)|_{l\in L'})=
\sum_{J\subset K}\frac{(id_{L'}\times\Delta_{X,J})^*\bigl(id_{L'}\times\rho\{J\}\bigr)_*
\res{(\wtD{\Depn{i}{|J|-1}{H}})}{\chi_J,\vec{x(i)_J}}(\gamma(l)|_{l\in L'})}{\prod_{j\in J}(\rho\{j\})_*(1)},
\end{split}
\end{equation*}
where $\Depn{i}{|J|-1}{H}$ is a poly-operation with the set of coordinates $L_J$, with natural surjective map $\theta_J:L_J\row L$,
such that $\theta_J^{-1}(i)=J$ and the complement $L'\stackrel{\chi_J}{\lrow}L_J$, where
$x(i)_j=\left(V\{j\}\stackrel{v\{j\}}{\row}\wt{X}\{j\}\stackrel{id}{\row}\wt{X}\{j\},
x(i)\{j\}\right)$, for $j\in J$, and $\rho\{J\}=\times_{j\in J}\rho\{j\}$, while $id_{L'}$ is the identity map on the factors corresponding to the $L'$-coordinates.
Thus, we can reduce to the case, where the fixed elements are "simple": $|K|=1$ (though, the number of elements we fix may now be more
than one). The fact that for such elements we can swap $\wtD{\phantom{a}}$ and restriction follows directly from the
(poly-operational version of the) definition (\ref{defGX}) and Proposition \ref{wtH}(4).
Using also part (1), we can now rewrite our expression as
\begin{equation*}
\begin{split}
&\sum_{J\subset K}\frac{(id_{L'}\times\Delta_{X,J})^*\bigl(id_{L'}\times\rho\{J\}\bigr)_*
(\res{(\Depn{i}{|J|-1}{H})}{\chi_J,\vec{x(i)_J}})\hspace{-14mm}\wtD{\phantom{H}}\hspace{10.5mm}
(\gamma(l)|_{l\in L'})}{\prod_{j\in J}(\rho\{j\})_*(1)}=
(\res{H}{\chi,x(i)})\hspace{-8mm}\wtD{\phantom{H}}\hspace{4.5mm}(\gamma(l)|_{l\in L'}),
\end{split}
\end{equation*}
where $(id_{L'}\times\Delta_{X,J})^*\bigl(id_{L'}\times\rho\{J\}\bigr)_*$ is a pre-morphism of theories
$(B_{\chi_J,\wt{X}^J})^*\row (B_{\chi,X})^*$, where $\wt{X}^J=\times_{j\in J}\wt{X}\{j\}$,
while the division by $\prod_{j\in J}(\rho\{j\})_*(1)$ is a pre-endomorphism of the target theory.

(2) Using the fact that
$\sver{(\sver{H}{\ffi})}{\psi}=\sver{H}{(\psi\circ\ffi)}$, we can reduce to the case where $\ffi$ collapses some subset
$L'\stackrel{\chi}{\row}L$ to a point, leaving everything else intact. Considering restrictions $\res{H}{\chi}$ and using (4),
we reduce to the case
of a complete internalization $\hat{H}:A^*=\times_{i\in L}A^*_i\row B^*$.

Let $J$ be some finite (non-empty) set.
Consider two poly-operations $(A^*)^{\boxtimes J}\row B^*\circ(\prod^{J})$.
\begin{equation*}
\begin{split}
&P':=(\Den{|J|-1}{\hat{H}})\hspace{3mm}\text{and}\hspace{3mm}
P'':=\sum_{\substack{\cup_{i\in L}J(i)=J\\ J(i)\neq\emptyset; i\in L}}
\sver{(\Den{|J(i)|-1;i\in L}{H})}{\ffi_{\vec{J}}}\circ pr_{\vec{J}},
\end{split}
\end{equation*}
where
$pr_{\vec{J}}:(A^*)^{\boxtimes J}\row\boxtimes_{j\in J}(\times_{i:J(i)\ni j}A^*_i)$ is the projection and
$\ffi_{\vec{J}}:\coprod_{i\in L}J(i)\row J$ is the natural surjective map.
It follows from (\ref{derCi-derC}) that our poly-operations agree on $(\bullet\times\proj)^{\times J}$
(note, that the external derivatives here
are just the usual derivatives). From $(c_i)$ and $(c_{ii})$ (and additivity) of the inductive assumption, they must also agree
on $((\smk)_{\leq d-1}\times\proj)^{\times J}$, that is, $\Hanyd{P'}{d-1}=\Hanyd{P''}{d-1}$.

Let $\gamma(\ov{z}^A)=
\sum_{j\in K}\left(V\{j\}\stackrel{v\{j\}}{\row}\wt{X}\{j\}\stackrel{\rho\{j\}}{\row}X,
\gamma\{j\}(\ov{z}^A)\right)\in c_{0,0}^{\check{A}}$ and $\gamma(i)\{j\}$ be the $A^*_i$-coordinates of $\gamma\{j\}$.
Let $\emptyset\neq J\subset K$ and $\cup_{i\in L}J(i)=J$ with all $J(i)$'s non-empty.
Consider the commutative diagram with bi-rational vertical maps and poly-diagonal horizontal ones:
$$
\xymatrix{
\times_{j\in J}\wt{X}\{j\} \ar[rr]^{\Delta_{\tilde{X},\vec{J}/J}} \ar[d]_{\rho\{J\}} & &
\times_{i\in L}\times_{j\in J(i)}\wt{X}\{j\} \ar[d]^{\rho\{\vec{J}\}}\\
\times_{j\in J} X \ar[rr]_{\Delta_{X,\vec{J}/J}} & &
\times_{i\in L}\times_{j\in J(i)}X.
}
$$

We have the following easy result:

\begin{lemma} {\rm (\cite[Lemma 5.11]{SU})}
\label{lem2d01}
Let
$$
\xymatrix @-0.2pc{
\wt{T} \ar @{->}[r]^{\wt{j}} \ar @{->}[d]_(0.5){q} &\wt{S} \ar @{->}[d]^(0.5){p}\\
T \ar @{->}[r]_{j} & S
}
$$
be commutative diagram of smooth varieties with $p$ and $q$ - projective bi-rational. Let $x\in im(p^*)$.
Then:
$$
\frac{q_*(\wt{j}^*(x))}{q_*(1)}=j^*\biggl(\frac{p_*(x)}{p_*(1)}\biggr).
$$
\end{lemma}

It follows from Proposition \ref{imrho} and Lemma \ref{lem2d01} that, denoting
$$
{\cal{H}}_{\vec{J}}(\gamma):=\wtO{\left(\Den{|J(i)|-1;i\in L}{H}\right)}
(\gamma(i)\{j\}\dagger V\{j\}|_{j\in J(i)}|_{i\in L}),
$$
we have:
$$
\frac{(\Delta_{X,J})^*(\Delta_{X,\vec{J}/J})^*\rho\{\vec{J}\}_*{\cal{H}}_{\vec{J}}(\gamma)}
{\prod_{i\in L}\prod_{j\in J(i)}(\rho\{j\})_*(1)}=
\frac{(\Delta_{X,J})^*\rho\{J\}_*(\Delta_{\wt{X},\vec{J}/J})^*{\cal{H}}_{\vec{J}}(\gamma)}
{\prod_{j\in J}(\rho\{j\})_*(1)}.
$$
Then using Proposition \ref{wtH}(2),(1) (in the 3-rd equality) we can rewrite $\hat{\wtD{H}}(\gamma)$ as
\begin{equation*}
\begin{split}
&\Delta_{X,L}^*\hspace{-3mm}\sum_{J(i)\subset K;i\in L}\hspace{-4mm}
\frac{(\times_{i\in L}\Delta_{X,J(i)})^*\rho\{\vec{J}\}_*{\cal{H}}_{\vec{J}}(\gamma)}
{\prod_{i\in L}\prod_{j\in J(i)}(\rho\{j\})_*(1)}=
\hspace{-4mm}\sum_{J(i)\subset K;i\in L}\hspace{-4mm}\Delta_{X,J}^*
\frac{(\Delta_{X,\vec{J}/J})^*\rho\{\vec{J}\}_*{\cal{H}}_{\vec{J}}(\gamma)}
{\prod_{i\in L}\prod_{j\in J(i)}(\rho\{j\})_*(1)}=\\
&\sum_{J(i)\subset K;i\in L}\Delta_{X,J}^*
\frac{\rho\{J\}_*(\Delta_{\wt{X},\vec{J}/J})^*{\cal{H}}_{\vec{J(i)}}(\gamma)}
{\prod_{j\in J}(\rho\{j\})_*(1)}=
\sum_{J\subset K}\Delta_{X,J}^*\frac{\rho\{J\}_*\wtO{P''}(\gamma\{j\}\dagger V\{j\}|_{j\in J})}
{\prod_{j\in J}(\rho\{j\})_*(1)}=\\
&\sum_{J\subset K}\Delta_{X,J}^*\frac{\rho\{J\}_*\wtO{P'}(\gamma\{j\}\dagger V\{j\}|_{j\in J})}
{\prod_{j\in J}(\rho\{j\})_*(1)}=\wtD{\hat{H}}(\gamma),
\end{split}
\end{equation*}
where $J=\cup_{i\in L}J(i)$, all $J(i)$'s are non-empty, and $\Delta_{X,L}:X\row\times_{l\in L}X$ is a diagonal embedding.

(3) It is sufficient to consider the case of a simple partial derivative $\depn{i}{m-1}{}$, where everything is reduced to the case of a
mono-operation $G:A^*\row B^*$ using (4). Let $J$ be some finite set.
Then we have the identity: $\Den{|J|-1}{(\den{m-1}{G})}=\depn{Tot}{m-1}{(\Den{|J|-1}{G})}$ between poly-operations
$\boxtimes_{j\in J}(A^*)^{\times m}\row B^*\circ (\prod^J)$ on $((\smk)_{\leq d-1}\times\proj)^{\times J}$,
where $\dep{Tot}{}$ of an $r$-nary poly-operation $H$ is given by:
$\dep{Tot}{H}(u_i,v_i|_{i\in\ov{r}})=H(u_i+v_i|_{i\in\ov{r}})-H(u_i|_{i\in\ov{r}})-H(v_i|_{i\in\ov{r}})$
(using (\ref{derCi-derC}), it may be expressed in terms of partial derivatives and projections $(A^*)^{\times 2}\row A^*$).
Indeed, this identity holds on $(\bullet\times\proj)^{\times J}$ where the external derivatives are just the usual derivatives, and so on
$((\smk)_{\leq d-1}\times\proj)^{\times J}$ by $(c_i)$, $(c_{iii})$ and additivity of our coherent compatible family,
since $\depn{Tot}{m-1}{}$ is expressible in terms of partial derivatives and projections. By the same reason, Proposition \ref{wtH}(3),(1)
and additivity of the assignment $H\mapsto\wtO{H}$ we have: $\wtO{(\Den{|J|-1}{(\den{m-1}{G})})}=\depn{Tot}{m-1}{\wtO{(\Den{|J|-1}{G})}}$.

Let $\gamma(i)(\ov{z}^A)=
\sum_{j\in K}\left(V\{j\}\stackrel{v\{j\}}{\row}\wt{X}\{j\}\stackrel{\rho\{j\}}{\row}X,
\gamma(i)\{j\}(\ov{z}^A)\right)\in c_{0,0}^{\check{A}}$, for $i\in\ov{m}$, where we can assume the set $K$ to be the same for all $i$. Then,
denoting $\rho\{J\}=\times_{j\in J}\rho\{j\}$, we have:
\begin{equation*}
\begin{split}
&\wtD{(\den{m-1}{G})}(\gamma(i)|_{i\in\ov{m}})=
\sum_{J\subset K}\frac{\Delta_{X,J}^*(\rho\{J\})_*\wtO{(\Den{|J|-1}{(\den{m-1}{G})})}(\gamma(i)\{j\}|_{i\in\ov{m}}\dagger V\{j\}|_{j\in J})}
{(\rho\{J\})_*(1)}=\\
&\sum_{J\subset K}\frac{\Delta_{X,J}^*(\rho\{J\})_*\depn{Tot}{m-1}{\wtO{(\Den{|J|-1}{G})}}
(\gamma(i)\{j\}|_{i\in\ov{m}}\dagger V\{j\}|_{j\in J})}{(\rho\{J\})_*(1)}=
\den{m-1}{(\wtD{G})}(\gamma(i)|_{i\in\ov{m}}).
\end{split}
\end{equation*}
\Qed
\end{proof}

To see that we indeed get the extension of the original operation from $(\smk)_{\leq (d-1)}$, observe that due to Taylor expansion
and Proposition \ref{c00wtH}(2),(3) it is sufficient to show that for any $r$-ary poly-operation $H$, any smooth quasi-projective
varieties $X(i)$, $i\in\ov{r}$ of dimension $\leq (d-1)$,
and "simple" elements $(V(i)\stackrel{v(i)}{\row}\wt{X}(i)\stackrel{\rho(i)}{\row}X(i),\,\gamma(i))\in c^{\check{A}_i}_{0,0}(X(i))$,
operation $H$ takes the same value
as $\wtD{H}$. Recall, that under an isomorphism of Theorem \ref{HcA} element
$(V\stackrel{v}{\row}\wt{X}\stackrel{\rho}{\row}X,\,\gamma)\in c_{0,0}^{\check{A}}(X)$ corresponds to
$\beta=\frac{\rho_*v_*(\gamma)}{\rho_*(1)}\in \check{A}^*(X)$ which has the property that $\rho^*(\beta)=v_*(\gamma)$. Then, using
(\ref{wtO-H}) and $(b_i)$, we get:
\begin{equation}
\begin{split}
\label{well-def-wtD}
&\wtD{H}_{X(i);i\in\ov{r}}(\beta(i)|_{i\in\ov{r}})=\frac{(\times_{i\in\ov{r}}\rho(i))_*\wtO{H}(\gamma(i)\dagger V(i))}
{(\times_{i\in\ov{r}}\rho(i))_*(1)}=\frac{(\times_{i\in\ov{r}}\rho(i))_*H_{\wt{X}(i);i\in\ov{r}}(v(i)_*\gamma(i))}
{(\times_{i\in\ov{r}}\rho(i))_*(1)}=\\
&\frac{(\times_{i\in\ov{r}}\rho(i))_*H_{\wt{X}(i);i\in\ov{r}}(\rho(i)^*\beta(i))}
{(\times_{i\in\ov{r}}\rho(i))_*(1)}=H_{X(i);i\in\ov{r}}(\beta(i)|_{i\in\ov{r}}).
\end{split}
\end{equation}
Thus, on $(\smk)_{\leq (d-1)}$ the "new" definition coincides with the "old" one. \\

Let now $X$ be a smooth quasi-projective variety of dimension $\leq d$.
To show that $\wtD{H}$ is well-defined on $\ov{A}^*(X)[[\ov{z}^A]]$,
by Taylor expansion, one needs to show that this map, as well as all of its partial derivatives, vanish on
$im(d^{\frc}_{1,0}\oplus d^{\frc}_{0,1})$ (in the respective coordinate, and anything in
the remaining ones). Thus, by Proposition \ref{c00wtH} and restrictions $\res{H}{\chi}$ to one coordinate,
it is sufficient to prove that any mono-transformation $\wtD{G}$ vanishes on such an image. Using Taylor expansion and
Proposition \ref{c00wtH} again, we see that it is sufficient to check this for each additive generator of
the image.
Here we mostly follow the proof of \cite[Theorem 5.1]{SU}.

\noindent $\bullet$ \un{The 1-st part of $(d^{\frc}_{1,0})$:} Suppose,
$$
\xymatrix @-0.2pc{
V' \ar @{->}[r]^{v'} \ar @{->}[d]_(0.5){\pi_V} &\wt{X}' \ar @{->}[d]^(0.5){\pi}\\
V \ar @{->}[r]_{v} & \wt{X}
}
$$
be the cartesian square, with $V$ and $V'$ divisors with strict normal crossings, with $\pi$ the blow up over $V$ permitted w.r.to $V$, and $V=\rho^{-1}(Z)$ for some closed subscheme
$Z\stackrel{z}{\row}X$. By the Taylor expansion (combined with arguments of Proposition \ref{well-def-wtO}) and
Proposition \ref{c00wtH}, it is enough
to check that the pairs:
$$
((V\stackrel{v}{\row}\wt{X}\stackrel{\rho}{\row}X),\gamma(\ov{z}^A))\hspace{5mm}\text{and}\hspace{5mm}
((V'\stackrel{v'}{\row}\wt{X}'\stackrel{\rho\circ\pi}{\row}X),\pi_V^{\star}(\gamma)(\ov{z}^A))
$$
produce the same result. This follows from Propositions \ref{central} and \ref{imrho}:
\begin{equation*}
\begin{split}
&\frac{\rho_*\pi_*\wtO{G}(\pi_V^{\star}\gamma(\ov{z}^A)\dagger V')(\ov{z}^B)}{\rho_*\pi_*(1)}=
\frac{\rho_*\pi_*\pi^*\wtO{G}(\gamma(\ov{z}^A)\dagger V)(\ov{z}^B)}{\rho_*\pi_*\pi^*(1)}=
\frac{\rho_*\wtO{G}(\gamma(\ov{z}^A)\dagger V)(\ov{z}^B)}{\rho_*(1)}.
\end{split}
\end{equation*}

\noindent $\bullet$ \un{The 2-nd part of $(d^{\frc}_{1,0})$:} It follows from (\ref{subdiv})
that if $\gamma$ is supported on some subdivisor $V_2\stackrel{f}{\row}V_1$, then
$\wtO{G}(\gamma(\ov{z}^A)\dagger V_2)=\wtO{G}(f_*\gamma(\ov{z}^A)\dagger V_1)$.
By the Taylor expansion (together with arguments of Proposition \ref{well-def-wtO}) and Proposition
\ref{c00wtH}, this is all what we need.\\

\noindent $\bullet$ \un{The $(d^{\frc}_{0,1})$:} Let $\wt{X\times\pp^1}\stackrel{\rho}{\row}X\times\pp^1$ be a
projective birational map, isomorphic outside the strict normal crossing divisor $W$, where $W=\rho^{-1}(Z)$ for some closed subscheme $Z\stackrel{z}{\row}X\times\pp^1$, $W$ has no components over $0$ and $1$,
such that the preimages $X_0=\rho^{-1}(X\times\{0\})$, and $X_1=\rho^{-1}(X\times\{1\})$ are
smooth divisors on $\wt{X\times\pp^1}$, and $W_0=W\cap X_0\hookrightarrow X_0$ and
$W_1=W\cap X_1\hookrightarrow X_1$ are divisors with strict normal crossings. In particular,
for each component $S\stackrel{h_S}{\row}W$ of $W$ with $g_S:S\row\wt{X\times\pp^1}$,
$S_0=g_S^{-1}(X_0)$ and $S_1=g_S^{-1}(X_1)$
are divisors with strict normal crossings on $S$ with closed embeddings $S_0\stackrel{h_{S,0}}{\row}W_0$, $S_1\stackrel{h_{S,1}}{\row}W_1$
of subdivisors with strict normal crossings on $X_0$ and $X_1$. We have a diagram with cartesian squares:
\begin{equation}
\label{dva-kvadr}
\xymatrix @-0.2pc{
S_0 \ar @{->}[d]_{g_{S,0}} \ar @{->}[r]^{i_{S,0}}& S \ar[d]^(0.4){g_S} & S_1 \ar @{->}[d]^{g_{S,1}} \ar @{->}[l]_{i_{S,1}}\\
X_0 \ar[r]_(0.4){\wt{i}_0} & \wt{X\times\pp^1} & X_1 \ar[l]^(0.4){\wt{i}_1}.
}
\end{equation}

Let $\delta(\ov{z}^A)=\sum_S(h_S)_*\delta_S\in im(\rho^!:A^*(Z)\row A^*(W))[[\ov{z}^A]]$.
We need to prove that any poly-operation $\wtD{H}$ is trivial when one of the coordinates belongs to the image of $(d^{\frc}_{0,1})$.
By Taylor expansion combined with Proposition \ref{c00wtH} and arguments of Proposition \ref{well-def-wtO}, it is enough to show that any mono-operation
$\wtD{G}$ takes the same values on the pairs
$$
((W_0\row X_0\stackrel{\rho_0}{\row}X),\sum\nolimits_S(h_{S,0})_*i_{S,0}^{\star}\delta_S(\ov{z}^A))
\hspace{5mm} \text{and}\hspace{5mm}
((W_1\row X_1\stackrel{\rho_1}{\row}X),\sum\nolimits_S(h_{S,1})_*i_{S,1}^{\star}\delta_S(\ov{z}^A)).
$$
Or, in other words, that
$$
\frac{(\rho_0)_*\wtO{G}(\sum\nolimits_S(h_{S,0})_*i_{S,0}^{\star}\delta_S(\ov{z}^A)\dagger W_0)}{(\rho_0)_*(1)}=
\frac{(\rho_1)_*\wtO{G}(\sum\nolimits_S(h_{S,1})_*i_{S,1}^{\star}\delta_S(\ov{z}^A)\dagger W_1)}{(\rho_1)_*(1)}.
$$
Let ${\cal S}$ be the set off all components of $W$.
Using Taylor expansion (together with Proposition \ref{wtH}), (\ref{subdiv})
and the fact that $S_0$ is a subdivisor of $W_0$ (and similar for $S_1$ and $W_1$),
it is enough to show that, for any subset ${\cal T}$ of ${\cal S}$, and $|{\cal T}|$-ary (external) poly-operation $H$, we have an identity:
$$
\frac{(\rho_0)_*\hat{\wtO{H}}(i_{S,0}^{\star}\delta_S(\ov{z}^A)\dagger S_0|_{S\in{\cal T}})}{(\rho_0)_*(1)}=
\frac{(\rho_1)_*\hat{\wtO{H}}(i_{S,1}^{\star}\delta_S(\ov{z}^A)\dagger S_1|_{S\in{\cal T}})}{(\rho_1)_*(1)}.
$$
Since our theory $A^*$ is constant, any element $\delta_S$ as above can be written as $\pi_S^*\alpha+\beta$, for some
$\alpha_S\in A^*(\op{Spec}(k))[[\ov{z}^A]]$ and $\beta_S\in \ov{A}^*(S)[[\ov{z}^A]]$, where $\pi_S:S\row\op{Spec}(k)$ is the projection.
Hence, again using Taylor expansion and Proposition \ref{wtH}, it is sufficient to show that, for two (possibly intersecting) subsets
${\cal T}_{\alpha}, {\cal T}_{\beta}\subset {\cal S}$ with ${\cal T}={\cal T}_{\alpha}\coprod{\cal T}_{\beta}$,
elements $\alpha_S\in A^*(\op{Spec}(k))[[\ov{z}^A]]$, for $S\in {\cal T}_{\alpha}$ and $\beta_S\in \ov{A}^*(S)[[\ov{z}^A]]$,
for $S\in {\cal T}_{\beta}$, and any $|{\cal T}|$-ary (external) poly-operation $H$, we have:
\begin{equation*}
\begin{split}
&\frac{(\rho_0)_*\hat{\wtO{H}}(i_{S,0}^{\star}\pi_S^*\alpha_S\dagger S_0|_{S\in{\cal T_{\alpha}}};
i_{S,0}^{\star}\beta_S\dagger S_0|_{S\in{\cal T_{\beta}}})}{(\rho_0)_*(1)}=
\frac{(\rho_1)_*\hat{\wtO{H}}(i_{S,1}^{\star}\pi_S^*\alpha_S\dagger S_1|_{S\in{\cal T_{\alpha}}};
i_{S,1}^{\star}\beta_S\dagger S_1|_{S\in{\cal T_{\beta}}})}{(\rho_1)_*(1)}.
\end{split}
\end{equation*}
Moreover, due to the continuity of $H$ (Proposition \ref{convergence}), we can assume that
$\beta_S\in\ov{A}^*(S)[\ov{z}^A]$ (due to the Taylor expansion and Proposition \ref{wtH} we can
even assume that it is a monomial in $\ov{z}^A$).
We have the following simple result:

\begin{lemma} {\rm (\cite[Lemma 5.13]{SU})}
\label{STY}
Let $S$ be quasi-projective variety, and $R\subset S$ be a divisor. Then any element of $\ov{A}^*(S)$
can be represented as $(\theta_S)_*(u)$, where $u\in A^*(Y_S)$ and $Y_S\stackrel{\theta_S}{\row}S$ is a closed
subscheme containing no components of $R$.
\end{lemma}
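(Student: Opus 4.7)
The plan is to reduce, via the localization exact sequence, to the case where $\alpha$ is pushed forward from a single component $T_j$ of $T$, and then to handle the resulting class in $A^*(T_j)$ by splitting off its constant summand and using a formal-group-law moving argument on $S$.

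First I would represent the restriction $\alpha|_{S\setminus|T|}\in\ov{A}^*(S\setminus|T|)$ on some closed subscheme $g:W\hookrightarrow S\setminus|T|$ of positive codimension. Its closure $\bar g:\bar W\hookrightarrow S$ does not contain any component of $T$, because every irreducible component of $\bar W$ must meet the dense open $W\subset\bar W$, which lies in $S\setminus|T|$. By the localization sequence
$$A_*(|T|)\lrow A_*(S)\lrow A_*(S\setminus|T|)\lrow 0,$$
the difference $\alpha-\bar g_*(u)$ equals $i_*(w)$ for some $w\in A_*(|T|)$, so it remains only to represent classes of the form $i_*(w)$ on subschemes avoiding components of $T$. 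A further localization on $|T|$ itself reduces to the case $w\in A^*(T_j)$ for a single component $T_j$.

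I would then split $w=c\cdot 1_{T_j}+\bar w$ with $c\in A^*(k)$ and $\bar w\in\ov{A}^*(T_j)$. The non-constant summand $\bar w$ is, by definition of $\ov{A}^*$, supported on some $Z\subset T_j$ of positive codimension in $T_j$, hence of codimension at least two in $S$, which automatically forbids $Z$ from containing any component of $T$; so $i_*(\bar w)$ is already in the required form.

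The hard part is the constant piece $c\cdot i_*(1_{T_j})=c\cdot c_1^A(\co_S(T_j))$, which has codimension one in $S$ and so is not handed to us by the positive-codimension reduction. To move it off $T$ I would use quasi-projectivity of $S$: write $\co_S(T_j)\cong L_1\otimes L_2^{\vee}$ with $L_1,L_2$ very ample and pick general global sections whose zero divisors $D_1,D_2$ contain no component of $T$ (possible because the components of $T$ form a finite set of proper subvarieties of $S$, against which a generic member of a very ample linear system cuts properly). Since $c_1^A$ depends only on the isomorphism class of the line bundle,
$$c\cdot c_1^A(\co_S(T_j))=c\cdot F_A\bigl([D_1],\chi_A([D_2])\bigr),$$
where $F_A$ and $\chi_A$ denote the formal group law and its formal inverse on $A^*$. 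Every monomial on the right is a product of powers of $[D_1]$ and $[D_2]$ and is therefore supported on $D_1\cup D_2\subset S$, which avoids every component of $T$. Assembling the three pieces produces the required $f:Y\hookrightarrow S$ and $u\in A^*(Y)$.
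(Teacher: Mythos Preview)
The paper does not prove this lemma; it is quoted verbatim from \cite[Lemma~5.13]{SU} and used as a black box, so there is no in-paper argument to compare against. Your strategy---localize away from $|T|$ to reduce to classes supported on $|T|$, then move the codimension-one part off $T$ via the formal group law and generic very ample divisors---is the standard one and is correct.

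Two small points to tighten. First, when you pass from $g_*(u)$ on $S\setminus|T|$ to a class on $S$, you must first lift $u\in A_*(W)$ to some $\bar u\in A_*(\bar W)$ using the $(LOC)$ sequence for $\bar W$; it is $\bar g_*(\bar u)$ that you subtract from $\alpha$. Second, the splitting $w=c\cdot 1_{T_j}+\bar w$ via $(CONST)$ and the identity $(i_j)_*(1_{T_j})=c_1^A(\co_S(T_j))$ presuppose that $T_j$ is smooth. In the paper's only use of the lemma the components of $T$ are faces of a strict normal crossing configuration and hence smooth, so nothing more is needed there. For the lemma in the generality stated you would instead represent $w$ by some projective $p:V\to T_j$ with $V$ smooth, apply $(CONST)$ on $V$, handle $\bar v\in\ov{A}^*(V)$ as you do (its support has codimension $\geq 2$ in $S$), and for the constant part observe that $(i_j\circ p)_*(1_V)=[V\to S]$ can be expressed, via the double-point relation underlying $\Omega^*$, in terms of $c_1^A(\co_S(T_j))$ and classes supported in codimension $\geq 2$; your very-ample moving argument then applies to $c_1^A(\co_S(T_j))$ exactly as written.
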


Thus, we can assume that our $\beta_S(\ov{z}^A)\in\ov{A}^*(S)[[\ov{z}^A]]$ is equal to $(p_S)_*(q_S)_*x_S$,
where the maps $p_S,q_S$ fit into the cartesian square
$$
\xymatrix{
Q_S \ar[r]^{q_S} \ar[d]_{p_{Q_S}}& \wt{S} \ar[d]^{p_S}\\
Y_S \ar[r]^{\theta_S} &  S,
}
$$
where $p_S$ is a permitted blow-up, isomorphism outside
$Y_S$, where $Y_S$ contains no components of $S_0$ and $S_1$, $Q_S=p_S^{-1}Y_S$ is a divisor with strict normal crossings on
$\wt{S}$ and $x_S\in A^*(Q_S)[[\ov{z}^A]]$ (note, that the map $(p_{Q_S})_*$ is surjective).
Let $\gamma_S(\ov{z}^A)=p_S^!(p_{Q_S})_*(x_S)\in A^*(Q_S)[[\ov{z}^A]]$. In particular, $(q_S)_*(\gamma_S)=(p_S)^*(\beta_S)$.

Let $\lambda_S^{B}=c^B_1(\co_{\wt{X\times\pp^1}}(S))$ and $\wt{\lambda}_S^B=t+_B\lambda_S^B$. Let
$(\wt{\lambda}^B)^{{\cal T}}=\prod_{S\in {\cal T}}\wt{\lambda}_S^B$,
$(S)^{{\cal T}_{\alpha}}=\displaystyle{\operatornamewithlimits{\times}_{S\in {\cal T}_{\alpha}}}S$ and similar for $\wt{S}$ and for
${\cal T}_{\beta}$ and ${\cal T}$. Let
$\pi_{{\cal T}_{\alpha}}=\displaystyle{\operatornamewithlimits{\times}_{S\in {\cal T}_{\alpha}}}(\pi_S):
(S)^{{\cal T}_{\alpha}}\row\op{Spec}(k)$,
$p_{{\cal T}_{\beta}}=\displaystyle{\operatornamewithlimits{\times}_{S\in {\cal T}_{\beta}}}(p_S):
(\wt{S})^{{\cal T}_{\beta}}\row (S)^{{\cal T}_{\beta}}$ and
$g_{{\cal T}}=\displaystyle{\operatornamewithlimits{\times}_{S\in {\cal T}}}(g_S): (S)^{{\cal T}}\row (\wt{X\times\pp^1})^{{\cal T}}$.
Let us define:
\begin{equation*}
\begin{split}
&\wtOO{H}(\pi_S^*\alpha_S(\ov{z}^A)\dagger S|_{S\in {\cal T}_{\alpha}};\beta_S(\ov{z}^A)\dagger S|_{S\in {\cal T}_{\beta}})(\ov{z}^B):=\\
&\Delta_{{\cal T}}^*(g_{{\cal T}})_*\operatornamewithlimits{Res}_{t=0}
\frac{((\pi_{{\cal T}_{\alpha}})^*\times (p_{{\cal T}_{\beta}})_*)
\wtO{H}(y_S^A\cdot\alpha_S(\ov{z}^A)|_{S\in {\cal T}_{\alpha}};
y_S^A\cdot\gamma_S(\ov{z}^A)\dagger Q_S|_{S\in {\cal T}_{\beta}})(y_S^B=\wt{\lambda}_S^B|_{S\in {\cal T}},\ov{z}^B)\cdot\omega^B_t}
{(id_{{\cal T}_{\alpha}}\times p_{{\cal T}_{\beta}})_*(1)\cdot(\wt{\lambda}^B)^{{\cal T}}\cdot t},
\end{split}
\end{equation*}
where $\Delta_{{\cal T}}:\wt{X\times\pp^1}\row \wt{X\times\pp^1}^{{\cal T}}$ is the diagonal embedding, and we consider the
extension $\wtO{H}$ of $H$ along ${\cal T}_{\beta}$-coordinates, plugging particular values over $\op{Spec}(k)$ into ${\cal T}_{\alpha}$
ones (so, we can treat it as a $|{\cal T}_{\beta}|$-ary poly-operation). Note, that $\ddim(Q_S)\leq d-1$.

\begin{lemma}
\label{New}
$$
\wt{i}_0^*\wtOO{H}(\pi_S^*\alpha_S(\ov{z}^A)\dagger S|_{S\in {\cal T}_{\alpha}};\beta_S(\ov{z}^A)\dagger S|_{S\in {\cal T}_{\beta}})=
\hat{\wtO{H}}(i_{S,0}^{\star}\pi_S^*\alpha_S\dagger S_0|_{S\in{\cal T_{\alpha}}};
i_{S,0}^{\star}\beta_S\dagger S_0|_{S\in{\cal T_{\beta}}}),
$$
and similar for the fiber over $1$.
\end{lemma}

\begin{proof}
From symmetry, it is sufficient to treat $S_0$.
Let $S_0=\sum_{k\in K(S)}m_{S,k}S_{0,k}$, where $S_{0,k}$ are smooth divisors on $X_0$ with normal crossings with each other.
Let coefficients $C^B_{J(S)}$, for $J(S)\subset K(S)$, be as in Proposition \ref{subcentral}
(that is, $\left(F_{J(S)}^{m_{S,k};k\in K(S)}\right)^B(\vec{\mu^B_S})$, where $\mu^B_{S,k}=c_1^B(O_{X_0}(S_{0,k}))$ -
see Subsection \ref{divclassrefpull})
and
$S\stackrel{f_{J(S)}}{\llow}S_{0,J(S)}\stackrel{g_{J(S)}}{\lrow}X_0$ be natural maps from faces of the divisor $S_0$.

By Proposition \ref{MPEIF} (applied to the left square of (\ref{dva-kvadr})) we have:
\begin{equation*}
\begin{split}
&\wt{i}_0^*\Delta_{{\cal T}}^*(g_{{\cal T}})_*(u)=\Delta_{{\cal T},0}^*(\wt{i}_0^{\times {\cal T}})^*(g_{{\cal T}})_*(u)=
\Delta_{{\cal T},0}^*\sum_{\emptyset\neq J(S)\subset K(S);S\in {\cal T}}(\operatornamewithlimits{\times}_{S\in {\cal T}}g_{J(S)})_*
\left(\prod_{S\in {\cal T}}C^B_{J(S)}\cdot (\operatornamewithlimits{\times}_{S\in {\cal T}}f_{J(S)})^*(u)\right),
\end{split}
\end{equation*}
where $\Delta_{{\cal T},0}:X_0\row X_0^{{\cal T}}$ is the diagonal embedding. Hence,
\begin{equation*}
\begin{split}
&\wt{i}_0^*\wtOO{H}(\pi_S^*\alpha_S(\ov{z}^A)\dagger S|_{S\in {\cal T}_{\alpha}};
\beta_S(\ov{z}^A)\dagger S|_{S\in {\cal T}_{\beta}})(\ov{z}^B)=
\Delta_{{\cal T},0}^*\hspace{0mm}\sum_{\emptyset\neq J(S)\subset K(S);S\in {\cal T}}\hspace{0mm}
(\operatornamewithlimits{\times}_{S\in {\cal T}}g_{J(S)})_*
\Bigg(\prod_{S\in {\cal T}}C^B_{J(S)}\cdot \\
&(\operatornamewithlimits{\times}_{S\in {\cal T}}f_{J(S)})^*\operatornamewithlimits{Res}_{t=0}
\frac{((\pi_{{\cal T}_{\alpha}})^*\times (p_{{\cal T}_{\beta}})_*)
\wtO{H}(y_S^A\cdot\alpha_S(\ov{z}^A)|_{S\in {\cal T}_{\alpha}};
y_S^A\cdot\gamma_S(\ov{z}^A)\dagger Q_S|_{S\in {\cal T}_{\beta}})(y_S^B=\wt{\lambda}_S^B|_{S\in {\cal T}},\ov{z}^B)\cdot\omega^B_t}
{(id_{{\cal T}_{\alpha}}\times p_{{\cal T}_{\beta}})_*(1)\cdot(\wt{\lambda}^B)^{{\cal T}}\cdot t}\Bigg)
\end{split}
\end{equation*}
If $S\in {\cal T}_{\alpha}$ and $G$ - some mono-operation, then by Proposition \ref{subcentral} and $(b_i)$
(applied to $(\pi_S\circ f_{J(S)})$ - note, that $\ddim(S_{0,J(S)})\leq d-1$), we get:
\begin{equation*}
\begin{split}
&\sum_{\emptyset\neq J(S)\subset K(S)}\hspace{-5mm}(g_{J(S)})_*\Bigg(C^B_{J(S)}\cdot f_{J(S)}^*\operatornamewithlimits{Res}_{t=0}
\frac{ \pi_S^*G\bigl(y_S^A\cdot\alpha_S(\ov{z}^A)\bigr)
(y_S^B=\wt{\lambda}_S^B,\ov{z}^B)\cdot\omega_t}
{\wt{\lambda}_S^B\cdot t}\Bigg)=
\wtO{G}\bigl(i_{S,0}^{\star}\pi_S^*\alpha_S(\ov{z}^A)\dagger S_0\bigr)(\ov{z}^B).
\end{split}
\end{equation*}
If $S\in {\cal T}_{\beta}$ and $G$ - some mono-operation, then
consider one of the components $S_{0,k}$.
By the results of Hironaka (see \cite{Hi}), we can find a permitted blow up
$\wt{S}_{0,k}\stackrel{p_{S,k}}{\lrow}S_{0,k}$, which fits into the diagram:
$$
\xymatrix @-0.2pc{
Q_{S,k}\ar @{->}[r]^{q_{S,k}} \ar @{->}[d]_(0.5){j_{S,k}}&
\wt{S}_{0,k} \ar @{->}[r]^{p_{S,k}} \ar @{->}[d]_(0.5){\wt{f}_{S,k}}&
S_{0,k} \ar @{->}[r]^{g_{S,k}} \ar @{->}[d]^(0.5){f_{S,k}} &  X_0 \ar @{->}[d]^(0.5){\wt{i}_0} \\
Q_S  \ar @{->}[r]_{q_S} & \wt{S} \ar @{->}[r]_{p_S}  &  S \ar @{->}[r]_(0.4){g_S}  &  \wt{X\times\pp^1},
}
$$
where the left square is cartesian, and $Q_{S,k}$ is a divisor with strict normal crossings on
$\wt{S}_{0,k}$.

By Proposition \ref{imrho},
$\wtO{G}\bigl(y_S^A\cdot\gamma_S(\ov{z}^A)\dagger Q_S\bigr)(y_S^B=p_S^*\wt{\lambda}_S^B,\ov{z}^B)\in im(p_S^*)$.
Using Lemma \ref{lem2d01}, Proposition \ref{central} (together with (\ref{wtO})), and $(b_i)$ (applied to $p_{S,k}$), we get:
\begin{equation*}
\begin{split}
&f_{S,k}^*\operatornamewithlimits{Res}_{t=0}
\frac{(p_S)_*\wtO{G}\bigl(y_S^A\gamma_S\dagger Q_S\bigr)
(y_S^B=\wt{\lambda}_S^B)\cdot\omega_t}
{(p_S)_*(1)\cdot\wt{\lambda}_S^B\cdot t}=
\operatornamewithlimits{Res}_{t=0}
\frac{(p_{S,k})_*\wt{f}_{S,k}^*\wtO{G}\bigl(y_S^A\gamma_S\dagger Q_S\bigr)
(y_S^B=\wt{\lambda}_S^B)\cdot\omega_t}
{(p_{S,k})_*(1)\cdot\wt{\lambda}_S^B\cdot t}=\\
&\operatornamewithlimits{Res}_{t=0}
\frac{(p_{S,k})_*\wtO{G}\bigl(y_S^A\cdot j_{S,k}^{\star}\gamma_S\dagger Q_{S,k}\bigr)
(y_S^B=\wt{\lambda}_S^B)\cdot\omega_t}
{(p_{S,k})_*(1)\cdot\wt{\lambda}_S^B\cdot t}=
\operatornamewithlimits{Res}_{t=0}
\frac{(p_{S,k})_*G\bigl(y_S^A\cdot\wt{f}_{S,k}^*(q_S)_*\gamma_S\bigr)
(y_S^B=\wt{\lambda}_S^B)\cdot\omega_t}
{(p_{S,k})_*(1)\cdot\wt{\lambda}_S^B\cdot t}=\\
&\operatornamewithlimits{Res}_{t=0}
\frac{(p_{S,k})_*G\bigl(y_S^A\cdot p_{S,k}^*f_{S,k}^*\beta_S\bigr)
(y_S^B=\wt{\lambda}_S^B)\cdot\omega_t}
{(p_{S,k})_*(1)\cdot\wt{\lambda}_S^B\cdot t}=
\operatornamewithlimits{Res}_{t=0}
\frac{G\bigl(y_S^A\cdot f_{S,k}^*\beta_S\bigr)(y_S^B=\wt{\lambda}_S^B)\cdot\omega_t}
{\wt{\lambda}_S^B\cdot t},
\end{split}
\end{equation*}
since $(q_{S,k})_*j_{S,k}^{\star}=\wt{f}_{S,k}^*(q_S)_*$ (by Proposition \ref{MPEIF}) and
$(q_S)_*\gamma_S=(p_S)^*\beta_S$.

This clearly implies (using $(b_i)$) that, for arbitrary $\emptyset\neq J(S)\subset K(S)$, we have:
$$
f_{J(S)}^*\operatornamewithlimits{Res}_{t=0}
\frac{(p_S)_*\wtO{G}\bigl(y_S^A\cdot\gamma_S(\ov{z}^A)\dagger Q_S\bigr)(y_S^B=\wt{\lambda}_S^B,\ov{z}^B)\cdot\omega_t}
{(p_S)_*(1)\cdot\wt{\lambda}_S^B\cdot t}=
\operatornamewithlimits{Res}_{t=0}
\frac{G\bigl(y_S^A\cdot f_{J(S)}^*\beta_S(\ov{z}^A)\bigr)(y_S^B=\wt{\lambda}_S^B,\ov{z}^B)\cdot\omega_t}
{\wt{\lambda}_S^B\cdot t},
$$
simply because $f_{J(S)}$ factors through some $f_{S,k}$.
Then, from Proposition \ref{MPEIF} and Proposition \ref{subcentral}, we obtain:
\begin{equation*}
\begin{split}
&\sum_{\emptyset\neq J(S)\subset K(S)}\hspace{-5mm}(g_{J(S)})_*\Bigg(C_{J(S)}^B\cdot f_{J(S)}^*\operatornamewithlimits{Res}_{t=0}
\frac{(p_S)_* \wtO{G}\bigl(y_S^A\cdot \gamma_S(\ov{z}^A)\dagger Q_S\bigr)(y_S^B=\wt{\lambda}_S^B,\ov{z}^B)\cdot\omega_t}
{(p_S)_*(1)\cdot\wt{\lambda}_S^B\cdot t}\Bigg)=
\wtO{G}\bigl(i_{S,0}^{\star}\beta_S(\ov{z}^A)\dagger S_0\bigr)(\ov{z}^B).
\end{split}
\end{equation*}

When we fix all but one coordinate in $\wtO{H}$ we get an extension $\wtO{G}$ of some mono-operation $G$ by Proposition \ref{wtH}(4).
Applying the above considerations to these mono-operations for every $S\in {\cal T}_{\alpha}$ and $S\in {\cal T}_{\beta}$,
we obtain:
\begin{equation*}
\begin{split}
&\wt{i}_0^*\wtOO{H}(\pi_S^*\alpha_S(\ov{z}^A)\dagger S|_{S\in {\cal T}_{\alpha}};
\beta_S(\ov{z}^A)\dagger S|_{S\in {\cal T}_{\beta}})(\ov{z}^B)=
\Delta_{{\cal T},0}^*\wtO{H}\bigl(i_{S,0}^{\star}\pi_S^*\alpha_S(\ov{z}^A)\dagger S_0|_{S\in {\cal T}_{\alpha}};
i_{S,0}^{\star}\beta_S(\ov{z}^A)\dagger S_0|_{S\in {\cal T}_{\beta}}\bigr)(\ov{z}^B)=\\
&\hat{\wtO{H}}\bigl(i_{S,0}^{\star}\pi_S^*\alpha_S(\ov{z}^A)\dagger S_0|_{S\in {\cal T}_{\alpha}};
i_{S,0}^{\star}\beta_S(\ov{z}^A)\dagger S_0|_{S\in {\cal T}_{\beta}}\bigr)(\ov{z}^B).
\end{split}
\end{equation*}
\Qed
\end{proof}

Since the special divisor of $\rho$ has no components over $0$ and $1$, the following cartesian diagram is
transversal:
$$
\xymatrix @-0.2pc{
X_0 \ar @{->}[r]^{\wt{i}_0} \ar @{->}[d]_(0.5){\rho_0} & \wt{X\times\pp^1} \ar @{->}[d]_(0.5){\rho}  &
X_1 \ar @{->}[l]_(0.5){\wt{i}_1}  \ar @{->}[d]^(0.5){\rho_1}   \\
X\times\{0\} \ar @{->}[r]_{i_0} & X\times\pp^1    &  X\times\{1\} \ar @{->}[l]^(0.5){i_1}.
}
$$

From Lemma \ref{New}, we obtain:
\begin{equation*}
\begin{split}
&\frac{(\rho_0)_*\hat{\wtO{H}}(i_{S,0}^{\star}\pi_S^*\alpha_S\dagger S_0|_{S\in{\cal T_{\alpha}}};
i_{S,0}^{\star}\beta_S\dagger S_0|_{S\in{\cal T_{\beta}}})}{(\rho_0)_*(1)}=
i_0^*\left(\frac{\rho_*\wtOO{H}(\pi_S^*\alpha_S(\ov{z}^A)\dagger S|_{S\in {\cal T}_{\alpha}};
\beta_S(\ov{z}^A)\dagger S|_{S\in {\cal T}_{\beta}})(\ov{z}^B)}{\rho_*(1)}\right)=\\
&i_1^*\left(\frac{\rho_*\wtOO{H}(\pi_S^*\alpha_S(\ov{z}^A)\dagger S|_{S\in {\cal T}_{\alpha}};
\beta_S(\ov{z}^A)\dagger S|_{S\in {\cal T}_{\beta}})(\ov{z}^B)}{\rho_*(1)}\right)=
\frac{(\rho_1)_*\hat{\wtO{H}}(i_{S,1}^{\star}\pi_S^*\alpha_S\dagger S_1|_{S\in{\cal T_{\alpha}}};
i_{S,1}^{\star}\beta_S\dagger S_1|_{S\in{\cal T_{\beta}}})}{(\rho_1)_*(1)}.
\end{split}
\end{equation*}
Hence, $\wtD{G}$ is trivial on the $im(d^{\frc}_{0,1})$,
and so is well-defined on $\ov{A}^*(X)$, and the same applies to any poly-operation.

For the constant part, we define:
\begin{equation}
\label{wtC}
\wtD{G}\bigl(\pi_X^*\alpha(\ov{z}^A)\bigr)(\ov{z}^B):=\pi_X^*G\bigl(\alpha(\ov{z}^A)\bigr)(\ov{z}^B).
\end{equation}
Similarly, for any poly-operation $H$, we can define an extension $\wtD{H}$, where each coordinate is either "constant",
or has support in positive co-dimension.

Then, for an arbitrary $\gamma(\ov{z}^A)=\pi_X^*\alpha(\ov{z}^A)+\beta(\ov{z}^A)$, where
$\alpha\in A[[\ov{z}^A]]$, $\beta\in\ov{A}^*(X)[[\ov{z}^A]]$, we define:
\begin{equation}
\label{GGd}
G(\gamma(\ov{z}^A)):=\wtD{G}(\pi_X^*\alpha(\ov{z}^A))+\wtD{G}(\beta(\ov{z}^A))+
\Delta_X^*\wtD{(\De{G})}(\pi_X^*\alpha(\ov{z}^A),\beta(\ov{z}^A)).
\end{equation}
We get a well-defined element $G_X$ of $\Hom_{Filt}(A^*(X)[[\ov{z}^A]], B^*(X)[[\ov{z}^B]])$.
And similar for poly-operations.

Now we can, finally, complete the induction step. We closely follow \cite[Proposition 5.16]{SU}.

\begin{proposition}
\label{Gd-1Gd}
Suppose, a coherent compatible family of dimension $\leq (d-1)$ is defined. Then it extends uniquely to a coherent compatible
family of dimension $\leq d$.
\end{proposition}

\begin{proof}
The uniqueness follows from the fact that the formulas
(\ref{Gtil}) and (\ref{defGX})
above are forced by the conditions $(b_{i-ii})$ and $(c_{ii-iii})$ (which, for (\ref{Gtil}), was proven in (\ref{wtO}),
and for (\ref{defGX}) - in (\ref{well-def-wtD})), while
the formulas (\ref{wtC}) and (\ref{GGd}) are clearly forced by $(b_i)$, Taylor expansion and $(c_{ii})$, $(c_{iii})$.

We already know that, for arbitrary $X$ of dimension $\leq d$, and arbitrary operation $G$ we can define $G_X$,
and similar for poly-operations.
It remains to check that the whole collection $\Hd{d}$, for all poly-operations $H$ is additive and
satisfies the conditions $(a_{i-iv})$, $(b_{i-ii})$, as well as $(c_{i-iv})$.
Additivity and $(a_{i-iv})$ follow immediately from the (poly-operational version of the) formulas (\ref{wtC}) and (\ref{GGd}) using
Proposition \ref{c00wtH}(0).

For $(b_{i-ii})$, considering restrictions,
it is clearly sufficient to treat the case of a mono-operation $G$.
We start with $(b_i)$.
Let $X\stackrel{f}{\row}Y$ be a map, with $\ddim(X),\ddim(Y)\leq d$.
Using the definition of $G_X,G_Y$, the fact that $f^*$ preserves the $A^*=A\oplus\ov{A}^*$ decomposition,
using Taylor expansion and again reducing to the case of a mono-operation we see that it is sufficient to treat the cases of
$\gamma=\beta(\ov{z}^A)\in\ov{A}^*(X)[[\ov{z}^A]]$ and of
$\gamma=\pi_X^*\alpha(\ov{z}^A)$, for some $\alpha(\ov{z}^A)\in A[[\ov{z}^A]]$.
The constant case follows straight from the definition. And for the $\beta$-case, by the
continuity (Proposition \ref{convergence}), it is enough to treat the case of $\beta(\ov{z}^A)\in\ov{A}^*(X)[\ov{z}^A]$.
Using the definition (\ref{defGX}) of $\wtD{G}$, and
passing from poly to mono-operations, we can assume that $\beta$ is represented by one
element $(V_Y\stackrel{v_Y}{\row}\wt{Y}\stackrel{\rho_Y}{\row}Y,\gamma(\ov{z}^A))$,
where $\rho_Y$ is a projective bi-rational map, isomorphism outside the strict normal crossing divisor
$V_Y$, where $\gamma\in im(\rho_Y^!:A^*(Z_Y)\row A^*(V_Y))[[\ov{z}^A]]$. Then
$\beta=\frac{(\rho_Y)_*(v_Y)_*(\gamma)}{(\rho_Y)_*(1)}$. Using the result of Hironaka \cite{Hi},
we can produce a commutative diagram:
$$
\xymatrix @-0.2pc{
V_X\ar @{->}[r]^{v_X} \ar @{->}[d]_(0.5){f_V}&
\wt{X} \ar @{->}[r]^{\rho_X} \ar @{->}[d]_(0.5){\wt{f}}&
X  \ar @{->}[d]^(0.5){f}  \\
V_Y  \ar @{->}[r]_{v_Y} & \wt{Y} \ar @{->}[r]_{\rho_Y}  &  Y,
}
$$
where $\rho_X$ is projective bi-rational, the left square is cartesian, and
$V_X\stackrel{v_X}{\row}\wt{X}$ is a divisor with strict normal crossings.
Using Proposition \ref{imrho} twice, Lemma \ref{lem2d01}, Proposition \ref{central}, and
Proposition \ref{MPEIF}, we obtain:
\begin{equation*}
\begin{split}
&f^*G_Y\biggl(\frac{(\rho_Y)_*(v_Y)_*(\gamma)}{(\rho_Y)_*(1)}\biggr):=
f^*\biggl(\frac{(\rho_Y)_*\wtO{G}(\gamma\dagger V_Y)}{(\rho_Y)_*(1)}\biggr)=
\frac{(\rho_X)_*\wt{f}^*\wtO{G}(\gamma\dagger V_Y)}{(\rho_X)_*(1)}=
\frac{(\rho_X)_*\wtO{G}(f_V^{\star}(\gamma)\dagger V_X)}{(\rho_X)_*(1)}=:\\
&G_X\biggl(\frac{(\rho_X)_*(v_X)_*f_V^{\star}(\gamma)}{(\rho_X)_*(1)}\biggr)=
G_X\biggl(\frac{(\rho_X)_*\wt{f}^*(v_Y)_*(\gamma)}{(\rho_X)_*(1)}\biggr)=
G_X\biggl(f^*\bigg(\frac{(\rho_Y)_*(v_Y)_*(\gamma)}{(\rho_Y)_*(1)}\bigg)\biggr),
\end{split}
\end{equation*}
since $(v_Y)_*(\gamma)\in im(\rho_Y^*)$.  And $(b_i)$ is proven.

Pass to $(b_{ii})$. Let $X\stackrel{j}{\row}Y$ be a regular embedding of codimension $s$, with normal bundle
$N_j$ and $\ddim(Y)\leq d$. We can clearly assume that $s>0$.
Consider the cartesian blow-up diagram:
$$
\xymatrix @-0.2pc{
E\ar @{->}[r]^{e} \ar @{->}[d]_(0.5){\eps} &\wt{Y} \ar @{->}[d]^(0.5){\pi}\\
X  \ar @{->}[r]_{j} & Y,
}
$$
where $E=\pp_X(N_j)$, and $N_{\wt{j}}=O(-1)$.
Let $M=\eps^*N_j/O(-1)$, $\nu^{A,B}_1,\ldots,\nu^{A,B}_{s-1}$ - be roots of $M$, and
$\zeta^{A,B}$ - be the root of $O(-1)$.
By the already proven $(b_i)$, Proposition \ref{excess}, the definition of $G_{\wt{Y}}$ and
Proposition \ref{specializ},
\begin{equation*}
\begin{split}
&\pi^*G_Y\bigl(j_*\gamma(\ov{z}^A)\bigr)(\ov{z}^B)=G_{\wt{Y}}\bigl(\pi^*j_*\gamma(\ov{z}^A)\bigr)(\ov{z}^B)=
G_{\wt{Y}}\Bigl(e_*(c^A_{s-1}(M)\cdot\eps^*\gamma(\ov{z}^A))\Bigr)(\ov{z}^B)=\\
&e_*\operatornamewithlimits{Res}_{t=0}
\frac{G_E\Bigl(y^A\cdot c^A_{s-1}(M)\cdot\eps^*\gamma(\ov{z}^A)\Bigr)
\bigl(y^B=\wt{\zeta}^B,\ov{z}^B\bigr)\cdot\omega_t}
{\wt{\zeta}^B\cdot t}=\\
&e_*\operatornamewithlimits{Res}_{t=0}
\frac{G_E\Bigl(y^A\cdot\prod_{i=1}^{s-1}u_i^A\cdot\eps^*\gamma(\ov{z}^A)\Bigr)
\bigl(y^B=\wt{\zeta}^B,u_i^B=\wt{\nu}_i^B|_{i\in\ov{s-1}},\ov{z}^B\bigr)\cdot\omega_t}
{\wt{\zeta}^B\cdot t}
\end{split}
\end{equation*}
Using (\ref{conv}), again $(b_i)$ and Proposition \ref{excess}, we can rewrite it as
\begin{equation*}
\begin{split}
&e_*\left(\prod_{i=1}^{s-1}\nu_i^B\cdot\operatornamewithlimits{Res}_{t=0}
\frac{G_E\Bigl(y^A\cdot\prod_{i=1}^{s-1}u_i^A\cdot\eps^*\gamma(\ov{z}^A)\Bigr)
\bigl(y^B=\wt{\zeta}^B,u_i^B=\wt{\nu}_i^B|_{i\in\ov{s-1}},\ov{z}^B\bigr)\cdot\omega_t}
{\wt{\zeta}^B\prod_{i=1}^{s-1}\wt{\nu}_i^B\cdot t}\right)=\\
&e_*\left(c^B_{s-1}(\cm)\cdot\eps^*\operatornamewithlimits{Res}_{t=0}
\frac{G_X\Bigl(\prod_{i=1}^{s}v_i^A\cdot\gamma(\ov{z}^A)\Bigr)
\bigl(v_i^B=\wt{\mu}_i^B|_{i\in\ov{s}},\ov{z}^B\bigr)\cdot\omega_t}
{\prod_{i=1}^{s}\wt{\mu}_i^B\cdot t}\right)=\\
&\pi^*j_*\operatornamewithlimits{Res}_{t=0}
\frac{G_X\Bigl(\prod_{i=1}^{s}v_i^A\cdot\gamma(\ov{z}^A)\Bigr)
\bigl(v_i^B=\wt{\mu}_i^B|_{i\in\ov{s}},\ov{z}^B\bigr)\cdot\omega_t}
{\prod_{i=1}^{s}\wt{\mu}_i^B\cdot t},
\end{split}
\end{equation*}
where we use that, by $(a_i)$, the expression
$G_X\Bigl(\prod_{i=1}^{s}v_i^A\cdot\gamma(\ov{z}^A)\Bigr)\bigl(v_i^B=\wt{\mu}_i^B|_{i\in\ov{s}},\ov{z}^B\bigr)$
is a symmetric function in $\{\wt{\mu}_i^B;\,i\in\ov{s}\}$ divisible by $\prod_{i=1}^{s}\wt{\mu}_i^B$ and symmetric functions for
$\{\mu_i^B;\,i\in\ov{s}\}$ and $\{\zeta^B,\nu_i^B;\,i\in\ov{s-1}\}$ coincide.
Now $(b_{ii})$ follows from the injectivity of $\pi^*$.

Turn now to $(c_{i-iv})$. The additivity of our assignment $H\mapsto\Hanyd{H}{d}$ is clear.
It remains to check that results of Proposition \ref{c00wtH} can be extended from elements supported
in positive codimension $\ov{A}^*(X)$ to the whole theory $A^*(X)$. For part (1), this follows from Proposition \ref{c00wtH}(1) and
the fact that morphisms and pre-morphisms of theories commute with pull-backs, and we get $(c_i)$.

For part (4), it is clearly enough to consider the case where $L\backslash L'=\{i\}$ is a one-point set.
Let $H:\boxtimes_{j\in L}A^*_j\row B^*\circ (\prod^L)$ be an $|L|$-nary poly-operation, and $i\in L$.
Let $X_j\stackrel{\pi_j}{\lrow}\op{Spec}(k)$ be smooth quasi-projective varieties, and
$\gamma_j=\pi_j^*\alpha_j+\beta_j\in \check{A}^*_j$, for $j\in L$, be some elements.
Then, by definition,
$$
H(\gamma_j|_{j\in L})=\sum_{J_{\alpha}\cup J_{\beta}=L}\Delta_{\vec{X},\vec{J}}^*(\pi_{J_{\alpha}}\times id)^*
\wtD{(\Dep{J}{H})}(\alpha_j|_{j\in J_{\alpha}},\beta_j|_{j\in J_{\beta}}),
$$
where $J=J_{\alpha}\cap J_{\beta}$, $\Delta_{\vec{X},\vec{J}}:\times_{j\in L}X_j\row\times_{j\in L(\vec{J})}X_j$ with
$L(\vec{J})=J_{\alpha}\coprod J_{\beta}$ is the poly-diagonal map, $\pi_{J_{\alpha}}=\times_{j\in J_{\alpha}}\pi_j$ is
the projection, and $\Dep{J}{H}=\Den{1;j\in J}{H}$. Let $\chi:L'\row L$ be the embedding. Then, denoting
$J'_{\alpha}=J_{\alpha}\backslash i$, $J'_{\beta}=J_{\beta}\backslash i$, by Proposition \ref{c00wtH}(4) we have for the ``slices of the extension of $H$'':
\begin{equation*}
\begin{split}
(H)_{\chi,\pi_i^*\alpha_i+\beta_i}(\gamma_j|_{j\in L'})=
&\sum_{J_{\alpha}\cup J_{\beta}=L}\Delta_{\vec{X},\vec{J}}^*(\pi_{J_{\alpha}}\times id)^*
{\wtD{(\Dep{J}{H})}}_{\chi_J,\vec{x}_J}(\alpha_j|_{j\in J'_{\alpha}},\beta_j|_{j\in J'_{\beta}})=\\
&\sum_{J_{\alpha}\cup J_{\beta}=L}\Delta_{\vec{X},\vec{J}}^*(\pi_{J_{\alpha}}\times id)^*
((\Dep{J}{H})_{\chi_J,\vec{x}_J})\hspace{-10mm}\wtD{\phantom{H}}\hspace{7mm}(\alpha_j|_{j\in J'_{\alpha}},\beta_j|_{j\in J'_{\beta}}),
\end{split}
\end{equation*}
where $\chi_J:J'_{\alpha}\coprod J'_{\beta}\row J_{\alpha}\coprod J_{\beta}$ is the embedding and
$\vec{x}_J=\{\alpha_i|_{\text{if}\,i\in J_{\alpha}},\beta_i|_{\text{if}\,i\in J_{\beta}}\}$. From the definition,
the ``extension of slices of $H$'' is given by:
\begin{equation*}
\begin{split}
(H_{\chi,\pi_i^*\alpha_i+\beta_i})(\gamma_j|_{j\in L'})=&(\pi_i\times id)^*(H_{\chi,\alpha_i})(\gamma_j|_{j\in L'})+
(H_{\chi,\beta_i})(\gamma_j|_{j\in L'})+\\
&(\Delta_{X_i}\times id)^*(\pi_i\times id\times id)^*((\Dep{i}H)_{\chi_i,(\alpha_i,\beta_i)}(\gamma_j|_{j\in L'})),
\end{split}
\end{equation*}
where $\Delta_{X_i}:X_i\row X_i^{\times 2}$ is the diagonal map, $L(i)$ is $L$ with $i$ duplicated and $\chi_i:L'\row L(i)$ is the
complement to this double $i$. Note, that here $(\pi_i\times id)^*$ is a morphism of theories
$(B_{\chi,\bullet})^*=B^*\row (B_{\chi,X_i})^*$ and $(\Delta_{X_i}\times id)^*(\pi_i\times id\times id)^*=id$.
Then, using Proposition \ref{c00wtH}(1) we can rewrite our expression as
\begin{equation*}
\begin{split}
\sum_{J'_{\alpha}\cup J'_{\beta}=L'}\Delta_{\vec{X},\vec{J}'}^*(\pi_{J'_{\alpha}}\times id)^*
(\Dep{J'}{(H_{\chi,\pi_i^*\alpha_i+\beta_i})}\hspace{-14mm}\wtD{\phantom{H}}\hspace{10mm})
(\alpha_j|_{j\in J'_{\alpha}},\beta_j|_{j\in J'_{\beta}})=
(H_{\chi,\pi_i^*\alpha_i+\beta_i})(\gamma_j|_{j\in L'}),
\end{split}
\end{equation*}
where $J'=J'_{\alpha}\cap J'_{\beta}$. So, we get $(c_{iv})$.

For part (2), by the standard arguments using restriction and part (4),
it is sufficient to consider the case of a complete internalization $\hat{H}$ of a bi-operation
$H:A_1^*\boxtimes A_2^*\row B^*\circ(\prod^2)$. Let $\gamma_i=\pi^*\alpha_i+\beta_i\in \check{A}^*_i(X)=A^*_i(X)[\ov{z}^A]$, for $i=1,2$, where
$X\stackrel{\pi}{\row}\op{Spec}(k)$ has dimension $\leq d$ and $\beta_i$ has a positive co-dimension of support. Let
$A^*=A_1^*\times A_2^*$. From the proof of Proposition \ref{c00wtH}(2) we have an equality of two bi-operations
$(A^*)^{\boxtimes 2}\row B^*\circ(\prod^2)$ on $(\bullet\times\proj)^{\times 2}$:
$$
(\De{\hat{H}})=\sum_{\substack{J(1)\cup J(2)=\ov{2}\\ J(1)\neq\emptyset\neq J(2)}}
\sver{(\Den{|J(i)|-1;i\in\ov{2}}{H})}{\ffi_{\vec{J}}}\circ pr_{\vec{J}},
$$
where
$pr_{\vec{J}}:(A^*)^{\boxtimes 2}\row\boxtimes_{j\in\ov{2}}(\times_{i:J(i)\ni j}A^*_i)$ is the projection and
$\ffi_{\vec{J}}:\coprod_{i\in\ov{2}}J(i)\row \ov{2}$ is the natural surjective map.
Now, let us fix $\alpha_1,\alpha_2$. Then
$(\De{\hat{H}})((\alpha_1,\alpha_2),(\beta_1,\beta_2))$ is a mono-operation $A^*\row B^*$ in $\beta$-variable
which by the above must coincide with
$
\sum_{\substack{J(1)\cup J(2)=\ov{2}\\ J(1)\neq\emptyset\neq J(2)}}
\sver{(\Den{|J(i)|-1;i\in\ov{2}}{H})}{\ffi_{\vec{J}}}\circ pr_{\vec{J}}((\alpha_1,\alpha_2),(\beta_1,\beta_2)).
$
Hence, extensions $\wtD{\phantom{H}}$ of these operations along $\beta$-variables coincide as well.
By Proposition \ref{c00wtH}(1),(2), we have:
\begin{equation*}
\begin{split}
\Hanyd{\hat{H}}{d}(\gamma_1,\gamma_2):=&\pi^*\hat{H}_{\bullet}(\alpha_1,\alpha_2)+\wtD{\hat{H}}_X(\beta_1,\beta_2)+
\wtD{(\De{\hat{\phantom{.}\hspace{-3mm}H\phantom{.}}})}_{\bullet,X}((\alpha_1,\alpha_2),(\beta_1,\beta_2))=
\pi^*\hat{H}_{\bullet}(\alpha_1,\alpha_2)+\hat{\wtD{H}}_X(\beta_1,\beta_2)+\\
&\sum_{\substack{J(1)\cup J(2)=\ov{2}\\ J(1)\neq\emptyset\neq J(2)}}
(\sver{\wtD{(\Den{|J(i)|-1;i\in\ov{2}}{H})}}{\ffi_{\vec{J}}})_{\bullet,X}\circ pr_{\vec{J}}((\alpha_1,\alpha_2),(\beta_1,\beta_2))=:
\hat{(\Hanyd{H}{d})}(\gamma_1,\gamma_2),
\end{split}
\end{equation*}
and we obtain $(c_{ii})$.

For $(c_{iii})$, by considering the restrictions $\res{H}{\chi}$ to a single coordinate and using part (4),
we can reduce to the case of a mono-operation
$G:A^*\row B^*$. As in the proof of Proposition \ref{c00wtH}(3), we have an identity $\De{(\den{m-1}{G})}=\depn{Tot}{m-1}{(\De{G})}$
between poly-operations $((A^*)^{\times m})^{\boxtimes 2}\row B^*\circ(\prod^2)$ on $(\bullet\times\proj)^{\times 2}$.
Let $X\stackrel{\pi}{\row}\op{Spec}(k)$ has dimension $\leq d$ and $\gamma_i=\pi^*\alpha_i+\beta_i\in \check{A}^*(X)$ for $i\in\ov{m}$
(where $\beta_i$ has positive co-dimension of support).
When we plug the fixed $(\alpha_i|_{i\in\ov{m}})$ into our poly-operations, we get a mono-operation
$\De{(\den{m-1}{G})}((\alpha_i|_{i\in\ov{m}}),(\beta_i|_{i\in\ov{m}}))$ in $\beta$-variable. Then the extensions $\wtD{\phantom{a}}$
of our operations along $\beta$-variable coincide as well. Since $\dep{Tot}{}$ is expressible in terms of partial derivatives, by Proposition \ref{c00wtH}(3),
we have that $\depn{Tot}{m-1}{\wtD{(\De{G})}}=\wtD{(\De{(\den{m-1}G)})}$. Then, by (\ref{GGd}) and Proposition \ref{c00wtH}(3),
\begin{equation*}
\begin{split}
&\Hanyd{(\den{m-1}{G})}{d}(\gamma_i|_{i\in\ov{m}})=\pi^*(\den{m-1}{G})_{\bullet}(\alpha_i|_{i\in\ov{m}})+
\wtD{(\den{m-1}{G})}_X(\beta_i|_{i\in\ov{m}})+
\wtD{(\De{(\den{m-1}{G})})}_{\bullet,X}((\alpha_i|_{i\in\ov{m}}),(\beta_i|_{i\in\ov{m}}))=\\
&\den{m-1}{\pi^*(G)_{\bullet}}(\alpha_i|_{i\in\ov{m}})+\den{m-1}{(\wtD{G})}_X(\beta_i|_{i\in\ov{m}})+
(\depn{Tot}{m-1}{\wtD{(\De{G})}})_{\bullet,X}((\alpha_i|_{i\in\ov{m}}),(\beta_i|_{i\in\ov{m}}))=
\den{m-1}{(\Hanyd{G}{d})}(\gamma_i|_{i\in\ov{m}}),
\end{split}
\end{equation*}
and we get $(c_{iii})$. So, we obtain a coherent compatible family of dimension $\leq d$.
\Qed
\end{proof}

Thus, we extend any $r$-ary poly-operation $H$ from $(\bullet\times\proj)^{\times r}$ to $(\smk\times\proj)^{\times r}$, and the result
satisfies in addition $(b_{ii})$.
The fact that the restriction of it to $(\proj\times\bullet)^{\times r}$ coincides with the one to
$(\bullet\times\proj)^{\times r}$ can be reduced by induction on $r$ to the case of a mono-operation $G:A^*\row B^*$,
and further with the help of continuity (Proposition \ref{convergence}) and Discrete Taylor expansion
to the case where respective power series are monomials (here as well as in the Riemann-Roch Theorem below we abuse the notations somewhat,
as objects of $\proj$ are only direct limits of objects of $\smk$).
This latter case follows from $(b_{ii})$ and $(b_i)$.
Indeed, (considering partial diagonal embeddings) we can clearly assume that $\gamma=\prod_{i\in\ov{n}}y_i^A\cdot\alpha$,
where $y^A_i=c_1^A(O(1))$ on the $i$-th copy of $\pp^{\infty}$ and $\alpha\in A$.
Consider the co-dimension $n$ regular embedding $j:(\pp^{\infty-1})^n\row (\pp^{\infty})^n$ and projections
$\pi:(\pp^{\infty})^n\row\bullet$ and $\eps=\pi\circ j$. Then $\gamma=j_*(\alpha)$, and
denoting $\ov{y}^A=y^A_i|_{i\in\ov{n}}$,
and using $(b_{ii})$, $(b_i)$ and (\ref{conv}) we have:
\begin{equation*}
\begin{split}
&G_{(\pp^{\infty})^{\times n}}(\gamma(\ov{y}^A))(\ov{y}^B)=
j_*\operatornamewithlimits{Res}_{t=0}
\frac{G_{(\pp^{\infty-1})^{\times n}}(\prod_{i\in\ov{n}}x^A_i\cdot\alpha)(x^B_i=\wt{y}^B_i|_{i\in\ov{n}})\omega^B_t}
{\prod_{i\in\ov{n}}\wt{y}^B_i\cdot t}=\\
&j_*\operatornamewithlimits{Res}_{t=0}
\frac{\eps^*G_{\bullet}(\prod_{i\in\ov{n}}x^A_i\cdot\alpha)(x^B_i=\wt{y}^B_i|_{i\in\ov{n}})\omega^B_t}
{\prod_{i\in\ov{n}}\wt{y}^B_i\cdot t}=\pi^*G_{\bullet}(\prod_{i\in\ov{n}}x^A_i\cdot\alpha)(x^B_i=y^B_i|_{i\in\ov{n}}).
\end{split}
\end{equation*}
So, our operation indeed extends the original
transformation.

\subsection{The uniqueness}

Since the restriction of a poly-operation to a single variable is a mono-operation, it is clearly enough to
prove the uniqueness in the case of a mono-operation. Moreover, it is sufficient to show that the zero mono-operation
extends uniquely. Let $G:A^*\row B^*$ be such a mono-operation on $\smk\times\proj$, whose restriction to $\bullet\times\proj$ is zero.
Suppose, $G$ is not zero, and $X\stackrel{\pi}{\row}\op{Spec}(k)$ is a smooth quasi-projective variety of the
smallest dimension $d$ for which $G_X$ is not zero. Since $A^*$ is constant, we can assume that $d>0$.
Let $\gamma\in A^*(X)[[\ov{z}^A]]$ be such that $G(\gamma)\neq 0$. Let $\alpha=\gamma|_{k(X)}\in \check{A}^*(k(X))=\check{A}$, and
$\beta=\gamma-\pi^*\alpha$. Then $G_X(\gamma)=\pi^*G_{\bullet}(\alpha)+G_X(\beta)+(\De{G})_{\bullet,X}(\alpha,\beta)$.
Here we may treat $(\De{G})_{\bullet,-}(\alpha, -)$ as a mono-operation in $\beta$ which is also trivial for varieties
of dimension $<d$, since $G$ is.   Since $G_{\bullet}=0$, we get that one of the other two summands is non-zero.
Thus, we may reduce to the case of an element supported in positive co-dimension (we still keep the name $G$ for our operation).
Let $\beta$ be supported on some closed subscheme $Z\row X$. Then, by the results of Hironaka \cite{Hi}, there exists a permitted
blow-up map $\rho:\wt{X}\row X$, isomorphic outside $Z$ and such that $\rho^{-1}(Z)$ is a divisor $D$ with strict normal
crossings on $\wt{X}$. Then $\rho^*(\beta)$ is supported on $D$. Since the map $\rho^*$ (in $B^*$-theory) is injective, it is enough
to consider the case, where $\beta$ is supported on a divisor with strict normal crossings. In this case, what we need follows
from the Riemann-Roch Theorem below, which gives $(b_{ii})$.

Indeed, let $D\stackrel{d}{\row}X$ be a divisor
with strict normal crossings with components $D_{J_0}\stackrel{\hat{d}_{J_0}}{\row}D$ of
multiplicity $m_{J_0},\,J_0\in M_0$, $d_{J_0}=d\circ\hat{d}_{J_0}$, and $\lambda^B_{J_0}=c^B_1(O(D_{J_0}))$.
Let $\beta=d_*(\delta)$, where $\delta=\sum_{J_0\in M_0}(\hat{d}_{J_0})_*(\delta_{J_0})\in\check{A}^*(D)$. Then, by the Taylor expansion
\ref{DTE}, $(b_i)$, $(b_{ii})$ and transversal cartesian square (\ref{DJ1-square}), we have:
\begin{equation*}
\begin{split}
G(d_*\delta(\ov{z}^A))(\ov{z}^B):=&\sum_{J_1\in \breve{M}_1}(\den{|J_1|-1}{G})((d_{J_0})_*\delta_{J_0}|_{J_0\in J_1})(\ov{z}^B)=
\sum_{J_1\in \breve{M}_1}\Delta_{X,J_1}^*(\Den{|J_1|-1}{G})((d_{J_0})_*\delta_{J_0}|_{J_0\in J_1})(\ov{z}^B)=\\
&\sum_{J_1\in \breve{M}_1}
(d_{J_1})_*\operatornamewithlimits{Res}_{t=0}
\frac{(\den{|J_1|-1}{G})_{D_{J_1}}(y^A_{J_0}\delta_{J_0}(\ov{z}^A)|_{J_0\in J_1})
(y^B_{J_0}\hspace{-0.5mm}=\hspace{-0.5mm}t+_B\lambda_{J_0}^B|_{J_0\in J_1},\ov{z}^B)
\cdot\omega_t^B}
{\prod_{J_0\in J_1}(t+_B\lambda_{J_0}^B)\cdot t},
\end{split}
\end{equation*}
where $\delta_{J_0}$ is restricted from $D_{J_0}$ to $D_{J_1}$.
But $(\den{|J_1|-1}{G})_{D_{J_1}}=0$, since $\ddim(D_{J_1})<d$. So, $G(\beta)=0$ - a contradiction.
Thus, operations extend uniquely from $\proj$ to $\smk$.

Finally, the General Riemann-Roch theorem for non-additive operations.
Here I formulate only the mono-operational case, the general one is an obvious extension:

\begin{theorem}
\label{RR}
Let $k$ be an arbitrary field, 
$A^*$ be any oriented cohomology theory in the sense of Levine-Morel \cite[Definition 1.1.2]{LM} (no (LOC) axiom), $B^*$ be any oriented
cohomology theory in the sense of Panin-Smirnov \cite[Definition 1.1.7]{P-RR} (i.e, in the axiom (LOC) we don't require surjectivity on the right).
Let $A^*\stackrel{G}{\row}B^*$ be some operation between them.
Then the composition
$$
\xymatrix{
\smk\times\proj \ar[r]^(0.6){\prod} & \smk
\ar@/^0.7pc/[rr]^{A^*}="1"
\ar@/_0.7pc/[rr]_{B^*}="2"
& & Sets
\ar@{}"1";"2"|(.3){\,}="3"
\ar@{}"1";"2"|(.7){\,}="4"
\ar@{=>}"3";"4"^{G}
}
$$
satisfies $(b_{ii})$.
\end{theorem}

\begin{proof}
Since an arbitrary line bundle $L$ on $Z$ is the restriction of the bundle $O(1)_1\otimes O(-1)_2$
via some map of $Z$ to $(\pp^{\infty})^{\times 2}$, we obtain that,
for $\lambda^A_i:=c_1^A(L_i)$ and $x\in A^*(Z)$, one has:
$$
G(x\cdot\prod_{i\in\ov{n}}\lambda^A_i)=G(x\cdot\prod_{i\in\ov{n}}z^A_i)(z^B_i=\lambda^B_i|_{i\in\ov{n}}).
$$
Indeed, from functoriality, we have an equality
$$
G(\gamma(\ov{\lambda}^A))=G(\gamma(\ov{z}^A))(\ov{z}^B=\ov{\lambda}^B),
$$
for any $\gamma\in A^*(Z)[[\ov{z}^A]]$ and very ample $L_i$'s. And since $G$ satisfies $(a_{iii'})$ (as it satisfies $(a_{i-iv})$ - see the end of Subsection 5.1),
the same is true for arbitrary line bundles - cf. the proof of Proposition \ref{specializ}. 
Applying it to the regular embedding $X\stackrel{g}{\row}Z=\pp_X(N\oplus O)$, with the normal
bundle $N$ whose Chern roots are $\lambda_i|_{i\in\ov{n}}$ with projection $Z\stackrel{\eps}{\row}X$,
and denoting $\wt{\wt{\lambda}}^C=\zeta^C+_C\lambda^C$ where $\zeta^C=c_1^C(O(1))$,
by functoriality, we get:
\begin{equation*}
\begin{split}
&G(g_*(u))=G\Big(\eps^*(u)\cdot\prod_{i\in\ov{n}}\wt{\wt{\lambda}}^A_i\Big)=
G\Big(\eps^*(u)\cdot\prod_{i\in\ov{n}}z^A_i\Big)\big(z^B_i=\wt{\wt{\lambda}}^B_i|_{i\in\ov{n}}\big)=\\
&\left(\prod_{i\in\ov{n}}\wt{\wt{\lambda}}^B_i\right)\cdot
\operatornamewithlimits{Res}_{t=\zeta^B}\frac{\eps^*G\big(u\cdot\prod_{i\in\ov{n}}z^A_i\big)
\big(z^B_i=\wt{\lambda}^B_i|_{i\in\ov{n}}\big)\omega_t}
{\prod_{i\in\ov{n}}\wt{\lambda}^B_i\cdot(t-\zeta^B)}=
\left(\prod_{i\in\ov{n}}\wt{\wt{\lambda}}^B_i\right)\cdot
\eps^*\operatornamewithlimits{Res}_{t=0}
\frac{G\big(u\cdot\prod_{i\in\ov{n}}z^A_i\big)\big(z^B_i=\wt{\lambda}^B_i|_{i\in\ov{n}}\big)\omega_t}
{\prod_{i\in\ov{n}}\wt{\lambda}^B_i\cdot t}=\\
&g_*\operatornamewithlimits{Res}_{t=0}
\frac{G\big(u\cdot\prod_{i\in\ov{n}}z^A_i\big)\big(z^B_i=\wt{\lambda}^B_i|_{i\in\ov{n}}\big)\omega_t}
{\prod_{i\in\ov{n}}\wt{\lambda}^B_i\cdot t} \hspace{1cm}(\text{let us denote it } g_*(v) \text{ for later use}),
\end{split}
\end{equation*}
since $\left(\prod_{i\in\ov{n}}\wt{\wt{\lambda}}^B_i\right)\cdot\zeta^B=0$ (note also, that $\zeta^B$ is nilpotent).
So we obtain the statement for this case.

For an arbitrary regular embedding $X\stackrel{f}{\row}Y$ we use the deformation to the normal cone
construction.
We have varieties $\wt{W}=Bl_{X\times\{0\}\subset Y\times\aaa^1}$, $\wt{X}=X\times\aaa^1$,
$W_0=\pp_X(N_f\oplus O)$, $W_1=Y\times\{1\}$ with natural projections:
$\wt{X}\stackrel{p}{\row}X$ and $\wt{W}\stackrel{\pi}{\row}Y$.
These fit into the diagram:
$$
\xymatrix @-0.2pc{
W_0 \ar @{->}[r]^(0.5){i_0} &
\wt{W}  & W_1 \ar @{->}[l]_(0.5){i_1}\\
X \ar @{->}[r]_(0.5){j_0} \ar @{->}[u]^(0.5){g}& \wt{X} \ar @{->}[u]^(0.5){h}&
X \ar @{->}[l]^(0.5){j_1} \ar @{->}[u]_(0.5){f}
}
$$
with both squares transversal cartesian.
Then $G(g_*(u))=g_*(v)$, where $v$ is given by the above formula.
Since the localization sequence for $A^*$ is a complex and
$B^*$ satisfies a weak form of $(LOC)$ axiom (exactness in the middle), we have that
$G(h_*p^*(u))=h_*(x)$, for some $x\in B^*(\wt{X})$. On the other hand,
$$
G(g_*(u))=G(g_*j_0^*p^*(u))=G(i_0^*h_*p^*(u))=i_0^*G(h_*p^*(u)).
$$
Thus, $g_*(v)=i_0^*h_*(x)=g_*j_0^*(x)$, and since $g_*$ is injective and $j_0^*$ is an isomorphism, we
obtain that $x=p^*(v)$.
Hence,
\begin{equation*}
\begin{split}
G(f_*(u))=G(f_*j_1^*p^*(u))=G(i_1^*h_*p^*(u))=i_1^*G(h_*p^*(u))=
i_1^*h_*p^*(v)=f_*(v),
\end{split}
\end{equation*}
and we are done.
\Qed
\end{proof}

This finishes the proof of Theorems \ref{MAIN} and \ref{MAINpoly}.

Theorem \ref{MAIN} together with the considerations of Subsection \ref{Tpp} provide the following algebraic description of
operations from a {\it theory of rational type}.

\begin{theorem}
\label{alg-MAIN}
Let $A^*$ be a theory of rational type and $B^*$ - any theory in the sense of Definition \ref{goct}. Then operations
$A^*\row B^*$ are in one-to-one correspondence with the maps
$$
G\in\Hom_{Filt}(A[[\ov{z}^A]], B[[\ov{z}^B]])
$$
satisfying $(a_i)$, $(a_{ii})$, $(a_{iii})$ and $(a_{iv})$ of Subsection \ref{Tpp}.
\end{theorem}

\section{Non-additive Symmetric operations}
\label{naSO}

The current article was motivated by the desire to construct the last remaining, the $0$-th
{\it Symmetric operation}, for all prime numbers. In contrast to all other Symmetric operations
this one is non-additive. The idea that such an operation should exist comes from the $p=2$ case
where it was produced (together with all others) by an explicit geometric construction - see
\cite{so2} long before the case of an odd $p$ could be approached.

Symmetric operations are related to Steenrod operations of Quillen's type in $\Omega^*$.
The {\it Total Steenrod operation} $(\,mod\,p)$
$$
\Omega^*\stackrel{St(\ov{i})}{\lrow}\Omega^*[\iis^{-1}][[t]][t^{-1}]
$$
is a multiplicative operation, whose {\it inverse Todd genus} is given by the formula:
$$
\gamma_{St(\ov{i})}(x)=x\prod_{i=1}^{p-1}(x+_{\Omega}[i_j]\cdot_{\Omega}t),
$$
where $\{i_j|_{j=1,\ldots, p-1}\}$ is some choice of representatives of non-zero cosets $(\,mod\,p)$,
and $\iis$ is their product.

Let $\square^p$ denote the operation of the $p$-th power (a non-additive operation).
Then it appears that the part of $(\square^p-St(\ov{i}))$ corresponding to the non-positive powers
of $t$ is divisible by the {\it formal} $[p]=\frac{p\cdot_{\Omega}t}{t}$.
Using our main result Theorem \ref{MAIN} we prove in \cite[Theorem 7.1]{SOpSt} that
one can divide canonically and get the {\it Total Symmetric operation} for a given $p$:

\begin{theorem}
\label{Phi}
There is unique operation $\Phi(\ov{i}):\Omega^*\row\Omega^*[\iis^{-1}][t^{-1}]$, for which:
$$
(\square^p-St(\ov{i})-[p]\cdot\Phi(\ov{i}))(\Omega^*)\subseteq\Omega^*[\iis^{-1}][[t]]t.
$$
\end{theorem}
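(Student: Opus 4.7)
The plan is to apply Theorem \ref{MAIN} to construct $\Phi(\ov{i})$ by specifying its action on products of projective spaces and then extending. Since $\Omega^*[\iis^{-1}][t^{-1}]$ is obtained from $\Omega^*$ by a change of coefficients, it satisfies Definition \ref{goct}, so it is a legitimate target for the classification. The target being only a polynomial in $t^{-1}$ (rather than a full Laurent series) is what forces the triangular system below to terminate and gives uniqueness.

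On $(\pp^{\infty})^{\times l}$ everything can be made explicit. We have $\square^p(\alpha)=\alpha^p$, while $St(\ov{i})(\alpha)$ is obtained from the Todd genus $\gamma(z)=z\prod_{j=1}^{p-1}(z+_\Omega[i_j]\cdot_\Omega t)$ applied to each Chern root and the induced coefficient change $\laz\to\laz[\iis^{-1}][[t]][t^{-1}]$ on the Lazard scalars. I would then look at the non-positive-$t$ part of the difference and solve $\square^p-St(\ov{i})\equiv [p]\cdot\Phi\pmod{\Omega^*[\iis^{-1}][[t]]t}$ for $\Phi(\alpha)=\sum_{k=0}^N\phi_k(\alpha)t^{-k}$. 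Writing $[p]=p+\sum_{j\geq 1}[p]_j t^j$, the equation becomes a triangular recursion $p\phi_k=(\square^p-St(\ov{i}))|_{t^{-k}}-\sum_{j\geq 1}[p]_j\phi_{k+j}$, which has a unique solution provided each right-hand side is divisible by $p$ in $\Omega^*((\pp^\infty)^{\times l})[\iis^{-1}]$.

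Once $\Phi$ is defined on $\proj$, I would check that it commutes with the symmetric-group action, partial diagonals, partial Segre embeddings, partial point embeddings, and partial projections: since $\square^p$, $St(\ov{i})$, and multiplication by $[p]$ are all natural with respect to these pullbacks, the equation is preserved, and the uniqueness of the triangular solution forces $\Phi$ to commute with them as well. Theorem \ref{MAIN} then extends $\Phi$ uniquely to an operation $\Omega^*\stackrel{\Phi}{\row}\Omega^*[\iis^{-1}][t^{-1}]$ on $\smk$. To verify the defining equation globally, consider $E:=\square^p-St(\ov{i})-[p]\cdot\Phi$ as an operation $\Omega^*\row\Omega^*[\iis^{-1}][[t]][t^{-1}]$; each coefficient operation $E|_{t^{-k}}:\Omega^*\row\Omega^*[\iis^{-1}]$ for $k\geq 0$ is, by Theorem \ref{MAIN}, determined by its restriction to $\proj$, where it vanishes by construction, hence vanishes globally. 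Thus $E\in\Omega^*[\iis^{-1}][[t]]t$. Uniqueness of $\Phi$ follows from the same argument applied to any alternative solution.

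The main obstacle is the divisibility step in the triangular recursion: proving that on products of projective spaces the required divisions by $p$ actually exist in $\Omega^*[\iis^{-1}]$. This reflects the classical fact that the $p$-th power and a Frobenius-style Steenrod operation coincide modulo $p$, and at the formal level is underwritten by the congruence $\binom{p}{k}\equiv 0\pmod p$ for $1\leq k\leq p-1$ combined with identities for the $p$-series $[p]$ of the universal formal group law.
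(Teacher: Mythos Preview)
The paper does not prove this theorem here; it is established in \cite{SOpSt}, with Theorem~\ref{MAIN} of the present paper serving as the key input. Your overall strategy---reduce to $\proj$ via Theorem~\ref{MAIN}, solve the equation there by a triangular recursion in powers of $t^{-1}$, deduce naturality from uniqueness of that solution (using that $\laz[\iis^{-1}][[z_1,\ldots,z_l]]$ is $p$-torsion-free), then invoke Theorem~\ref{MAIN} again for the global statement and for uniqueness---matches the shape of that argument, and your verification that the global defining equation holds coefficientwise via Theorem~\ref{MAIN} is correct.

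The genuine gap is precisely the point you flag as ``the main obstacle'': the divisibility. At each step of the recursion you need
\[
(\square^p - St(\ov{i}))\big|_{t^{-k}} \;-\; \sum_{j\geq 1}[p]_j\,\phi_{k+j}
\]
to be divisible by $p$ in $\laz[\iis^{-1}][[z_1,\ldots,z_l]]$. Your remark about $\binom{p}{k}\equiv 0\pmod p$ only says that $\square^p$ is additive mod $p$; it does not explain why the specific combination $\square^p - St(\ov{i})$ is congruent to something in $[p]\cdot\Omega^*[\iis^{-1}][t^{-1}]$ modulo positive powers of $t$. That is a much sharper statement: it is not a single congruence mod $p$ but a whole family (one division by $p$ for each coefficient $\phi_k$), and the higher coefficients $[p]_j$ of the $p$-series are \emph{not} divisible by $p$ in $\laz$, so the inductive step is not automatic. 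Establishing this integrality is the actual content of \cite[Theorem 7.1]{SOpSt}; without it your recursion only produces $\phi_k\in\laz[\iis^{-1}]\otimes\qq[[z_1,\ldots,z_l]]$, and you have no operation landing in $\Omega^*[\iis^{-1}][t^{-1}]$.
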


Symmetric operations encode all $p$-primary divisibilities of characteristic numbers,
and in a sense, plug the gap left by the {\it Hurewitz map} $\laz\hookrightarrow\zz[b_1,b_2,\ldots]$.
This permits to apply them to various questions related to torsion effects. In \cite{GPQCG} they were
applied to the problem of {\it field of definition} of the Chow group elements.
In \cite{ACMLR} we apply Theorem \ref{Phi} to determine the structure of Algebraic Cobordism as a
module over the Lazard ring. We prove in \cite[Theorem 4.3]{ACMLR} that $\Omega^*(X)$ has
relations in positive codimensions. This extends the result of M.Levine-F.Morel claiming that the
generators of this module are in non-negative codimensions. As an application we compute the Algebraic
Cobordism ring of a curve. In all these statements the use of non-additive $0$-th Symmetric operation
$\Phi^{t^0}$ is essential as it permits to sharpen the results.

\bigskip

\noindent
{{address: Alexander Vishik, School of Mathematical Sciences, University
of Nottingham, University Park, Nottingham, NG7 2RD, United Kingdom;\\
email:
{\sf alexander.vishik@nottingham.ac.uk}}}


\begin{thebibliography}{10}

\bibitem{Bl}
  S.\,Bloch, {\it Algebraic Cycles and Higher $K$-Theory}, Adv. in Math. {\bf 61} (1986), 267-304.

\bibitem{EM}
  S.\,Eilenberg, S.\,MacLane, {\it On the Groups $\op{H}(\Pi,n)$, II: Methods of Computation}, Annals of Math., Second Series, {\bf 60}, n.1 (Jul. 1954), 49-139.
  
\bibitem{Hi}
  H.\,Hironaka, {\it Resolution of singularities of an algebraic variety over a field of characteristic zero. I,II}, Annals of Math., (2) {\bf 79} (1964), 109-203; ibid. (2) {\bf 79} (1964), 205-326.

\bibitem{Ho}
  M.\,Hoyois, {\it From Algebraic Cobordism to Motivic Cohomology}, J. reine angew. Math. {\bf 702} (2016), 173-226.

\bibitem{Jou}
  J.\,P.\,Jouanolou, {\it Riemann-Roch sans d\'enominateurs},
  Inventiones Math. {\bf 11} (1970), 15-26.
  
\bibitem{Kash}
  T.\,Kashiwabara, {\it Hopf rings and unstable operations}, J. of Pure and Appl. Algebra {\bf 94}
   (1994), 183-193

\bibitem{Lcomp}
  M.\,Levine, {\it Comparison of cobordism theories}, J. Algebra {\bf 322} (2009), no. 9,
   3291-3317.

\bibitem{LM1}
  M.\,Levine, F.\,Morel, {\it Cobordisme alg\'ebrique $I$},
   C.R.Acad. Sci. Paris, S\'erie I, Math {\bf 332} (2001) no. 8, 723-728.
   
\bibitem{LM}
  M.\,Levine, F.\,Morel, {\it Algebraic cobordism}, Springer Monographs in
    Math., Springer-Verlag, 2007.
    
\bibitem{Ma}
  Yu.\,I.\,Manin, {\it Lectures on the $K$-functor in algebraic geometry}, Russian Math. Surveys, {\bf 24}:5 (1969), 1-89.
    
\bibitem{MV}
   F.\,Morel, V.\,Voevodsky,\ {\it $\aaa^{1}$-homotopy theory of schemes}, Publ. Math.
    IHES, {\bf 90} (1999), 45-143.

\bibitem{P}
  I.\,Panin, {\it Oriented Cohomology Theories of Algebraic Varieties}, K-theory J.,
    {\bf 30} (2003), 265-314.
    
\bibitem{P-RR}
  I.\,Panin, {\it Riemann-Roch theorems for oriented cohomology}, In ``Axiomatic, enriched and motivic homotopy theory'' (ed. J.P.C.Greenlees), NATO Sci. Ser. II, Math. Phys. Chem., 131, Kluwer Acad. Publ., (2004)

\bibitem{PS}
  I.\,Panin, A.\,Smirnov, {\it Push-forwards in oriented cohomology theories of algebraic varieties},
    K-theory preprint archive, 459, 2000. http://www.math.uiuc.edu/K-theory/0459/

\bibitem{Se17a}
  P.Sechin, {\it Chern classes from Algebraic Morava K-theories to Chow Groups}, IMRN (2017): rnx022.

\bibitem{Se17b}
  P.Sechin, {\it On the structure of Algebraic Cobordism}, Adv. Math. {\bf 333} (2018), 314-349.

\bibitem{Se18}
  P.Sechin, {\it Chern classes from Morava K-theories to $p^n$-typical oriented theories}, arXiv:1805.09050

\bibitem{Sm1}
  A.\,Smirnov, {\it Orientations and transfers in cohomology of algebraic varieties},
    St. Petersburg Math. J. {\bf 18} (2007), n.2, 305-346.
    
\bibitem{Sm2}
  A.Smirnov, {\it Riemann-Roch theorem for operations in cohomology of algebraic varieties},
    St. Petersburg Math. J., {\bf 18}, n.5, (2007), 837-856.

\bibitem{GPQCG}
  A.Vishik, {\it Generic points of quadrics and Chow groups},
   Manuscr. Math. {\bf 122} (2007), No.3, 365-374.

\bibitem{so2}
  A.\,Vishik, {\it Symmetric operations in Algebraic Cobordism},
    Adv. Math. {\bf 213} (2007), 489-552.

\bibitem{SU}
  A.\,Vishik, {\it Stable and Unstable Operations in Algebraic Cobordism},
   Ann. Scient. de l'Ecole Norm. Sup., 4-e s\'erie, {\bf 52} (2019), 
   561-630.

\bibitem{SOpSt}
  A.\,Vishik, {\it Symmetric operations for all primes and Steenrod operations in
   Algebraic Cobordism}, Compositio Math. {\bf 152} (2016), no.5, 1052-1070.

\bibitem{ACMLR}
  A.\,Vishik, {\it Algebraic Cobordism as a module over the Lazard ring},
   Math. Ann. {\bf 363} (2015), n.3, 973-983.

\bibitem{VoMot}
  V.\,Voevodsky, {\it Triangulated categories of motives over a field}, in
   {\sf Cycles, transfers and motivic homology theories}, Annals of Math. Studies,
   Princeton Univ. Press (2000), 87-137.

\bibitem{VoMil}
  V.\,Voevodsky, {\it Motivic cohomology with $\zz/2$-coefficients},
   Publ. Math. IHES {\bf 98} (2003), 59-104.



\end{thebibliography}
\end{document}